\documentclass[a4paper, 11pt,reqno]{amsart}
\usepackage{amsfonts, amsthm, amssymb, amsmath, stackengine, scalerel}
\usepackage{mathrsfs,array,tikz-cd}
\usepackage{eucal,fullpage,times,color,enumerate,accents, comment}
\usepackage[all]{xy}
\usepackage{xr}
\usepackage{url}
\usepackage{enumitem}
\usepackage[new]{old-arrows}
\usepackage{extpfeil}
\usepackage{turnstile} 
\usepackage{verbatim}  
\usetikzlibrary{graphs,decorations.pathmorphing,decorations.markings,shapes,arrows}
\input xy
\xyoption{all}
\usepackage{float}
\usepackage{subfig} 
\usepackage{hyperref}

\newcommand{\calO}{{\mathcal{O}}}

\newcommand{\calE}{\mathcal{E}}

\newcommand{\calD}{{\mathcal{D}}}

\newcommand{\calF}{\mathcal{F}}
\newcommand{\calG}{\mathcal{G}}

\newcommand{\calC}{\mathcal{C}}
\newcommand{\calS}{\mathcal{S}}
\newcommand{\calP}{\mathcal{P}}

\newcommand{\Tor}{\mathrm{Tor}}
\newcommand{\Spec}{\mathrm{Spec}}

\newcommand{\Set}{\mathrm{Set}}

\newcommand{\ord}{\mathrm{ord}}

\newcommand{\R}{\mathbb{R}}

\newcommand{\id}{\mathrm{id}}

\newcommand{\opp}{\mathrm{op}}    

\newcommand{\Loc}{\mathrm{Loc}}

\newcommand{\Frm}{\mathrm{Frm}}
\newcommand{\argu}{\text{(---)}}

\newcommand{\frap}{\mathfrak{p}}

\newcommand{\ISpec}{\mathrm{ISpec}}
\newcommand{\LSpec}{\mathrm{LSpec}}

\newcommand{\M}{\mathcal{M}}
\newcommand{\m}{\mathfrak{m}}
\newcommand{\baseS}{\mathcal{S}}

\newcommand{\thT}{\mathbb{T}}

\newcommand{\thU}{\mathbb{U}}
\newcommand{\places}{\mathrm{places}}

\newcommand{\D}{\mathcal{D}}

\newcommand{\Z}{\mathbb{Z}}
\newcommand{\N}{\mathbb{N}}
\newcommand{\Des}{\mathrm{Des}}
\newcommand{\E}{\mathcal{E}}
\newcommand{\B}{\mathrm{B}}
\newcommand{\Q}{\mathbb{Q}}
\newcommand{\spec}{\mathrm{spec}}

  \DeclareFontFamily{U}{wncy}{}
\DeclareFontShape{U}{wncy}{m}{n}{<->wncyr10}{}
\DeclareSymbolFont{mcy}{U}{wncy}{m}{n}
\DeclareMathSymbol{\Sh}{\mathord}{mcy}{"58}

\DeclareSymbolFont{matha}{OML}{txmi}{m}{it}
\DeclareMathSymbol{\varv}{\mathord}{matha}{118}

\newcommand{\lranglet}[2]{\langle#1,#2\rangle}

\def\forkindep{\mathrel{\raise0.2ex\hbox{\ooalign{\hidewidth$\vert$\hidewidth\cr\raise-0.9ex\hbox{$\smile$}}}}}

\newcommand{\OC}{{{[\mathbb{O}]}}}
\newcommand{\BG}{\mathrm{BG}}

\newcommand{\MING}[1]{{\color{blue} {\tiny \bf (M:)} {\bf #1}}}

\makeatletter
\newcommand*\bigcdot{\mathpalette\bigcdot@{.5}}
\newcommand*\bigcdot@[2]{\mathbin{\vcenter{\hbox{\scalebox{#2}{$\m@th#1\bullet$}}}}}
\makeatother

\newcommand{\LDes}{\mathrm{LDes}}

\newcommand{\J}{\mathfrak{J}}
\newcommand{\K}{\mathfrak{K}}
\newcommand{\qint}{\mathbb{Q}_{(0,1]}}

\newcommand{\gav}{{|\cdot|}}

\newsavebox{\pullback}
\sbox\pullback{%
	\begin{tikzpicture}%
	\draw (0,0) -- (1ex,0ex);%
	\draw (1ex,0ex) -- (1ex,1ex);%
	\end{tikzpicture}}

\newcommand{\RIdl}{\mathsf{RIdl}}

\DeclareMathOperator*{\colim}{colim}

\newcommand{\equalizer}[2]{\xymatrix@1{#1 \ar@<.4ex>[r] \ar@<-0.4ex>[r] & {\ } #2}}

\newcommand{\adjunction}[4]{\xymatrix@1{#1{\ } \ar@<-0.3ex>[r]_{ {\scriptstyle #2}} & {\ } #3 \ar@<-0.3ex>[l]_{ {\scriptstyle #4}}}}

\usepackage{epigraph}

\usepackage{xcolor}
\usepackage{csquotes}
\usepackage{lipsum}

\definecolor{quotemark}{gray}{0.7}
\makeatletter
\newlength\origparskip

\newcommand{\fquote}{%
	\@ifnextchar[{\fquote@i}{\fquote@i[]}
}

\def\fquote@i[#1]{%
	\@ifnextchar[{\fquote@ii{#1}}{\fquote@ii{#1}[]}
}%

\def\fquote@ii#1[#2]{%
	\def\pqm@tempa{#1}%
	\def\pqm@tempb{#2}%
	\noindent
	\list
	{}
	{\setlength{\leftmargin}{0.3\textwidth}%
		\setlength{\rightmargin}{0.1\textwidth}%
		\setlength{\origparskip}{\parskip}}%
	\item[]%
	\begin{picture}(0,0)%
	\put(-15,-8){\makebox(0,0){\scalebox{4}{%
				\textcolor{quotemark}{\textquotedblright}}}}%
	\end{picture}%
	\begingroup
	\itshape
	\ignorespaces}%

\def\endfquote{%
	\endgroup
	\par
	\raggedleft
	\ifx\pqm@tempa\empty
	\else
	{\bfseries --- \pqm@tempa\par}%
	\setlength{\parskip}{\origparskip}%
	\ifx\pqm@tempb\empty
	\else
	(\pqm@tempb)%
	\fi
	\fi
	\par
	\endlist}
\makeatother

\begin{document}
\bibliographystyle{alpha}
\newtheorem{theorem}{Theorem}[section]
\newtheorem*{theorem*}{Theorem}
\newtheorem*{condition*}{Condition}
\newtheorem*{definition*}{Definition}
\newtheorem*{corollary*}{Corollary}
\newtheorem{proposition}[theorem]{Proposition}
\newtheorem{lemma}[theorem]{Lemma}
\newtheorem{corollary}[theorem]{Corollary}
\newtheorem{claim}[theorem]{Claim}
\newtheorem{conclusion}[theorem]{Conclusion}
\newtheorem{hypothesis}[theorem]{Hypothesis}
\newtheorem{conjecture}[theorem]{Conjecture}
\newtheorem{setup}[theorem]{Setup}
\newtheorem{summarytheorem}{Summary Theorem}[section]

\newtheorem{maintheorem}{Theorem}
\renewcommand*{\themaintheorem}{\Alph{maintheorem}}
\newtheorem*{theorem:SINGLEPRIME}{Theorem~\ref{thm:DESCENTsinglePRIME}}
\newtheorem*{theorem:ARCHPRIME}{Theorem~\ref{thm:ARCHIMEDEANPLACE}}
\newtheorem*{theorem:OstrowskiQ}{Theorem~\ref{thm:ostrowskiQ}}

\theoremstyle{definition}
\newtheorem{definition}[theorem]{Definition}
\newtheorem{question}[theorem]{Question}
\newtheorem{action}[theorem]{Action Item}
\newtheorem{answer}[theorem]{Answer}
\newtheorem*{problem*}{Problem}
\newtheorem{goal}[theorem]{Goal}
\newtheorem{remark}[theorem]{Remark}
\newtheorem{observation}[theorem]{Observation}
\newtheorem{discussion}[theorem]{Discussion}
\newtheorem{guess}[theorem]{Guess}
\newtheorem{example}[theorem]{Example}
\newtheorem{condition}[theorem]{Condition}
\newtheorem{warning}[theorem]{Warning}
\newtheorem{notation}[theorem]{Notation}
\newtheorem{construction}[theorem]{Construction}

\newtheorem{problem}[theorem]{Problem}
\newtheorem{fact}[theorem]{Fact}
\newtheorem{thesis}[theorem]{Thesis}
\newtheorem{convention}[theorem]{Convention}
\newtheorem{summary}[theorem]{Summary}

\title{The Archimedean Place is a Blurred Interval at Infinity} 
\author{Ming Ng}
\thanks{\noindent Research partially supported by EPSRC Grant EP/V028812/1.}


\begin{abstract} Classically, the places of $\mathbb{Q}$ are often regarded as a one-point compactification of $\mathrm{Spec}(\mathbb{Z})$, with the real place corresponding to a formal ``prime'' added at infinity. Re-examining this picture from a topos-theoretic perspective reveals a subtler geometry: while the non-Archimedean places are identified with singletons indexed by the non-zero prime ideals of $\mathbb{Z}$, the Archimedean place is represented by the space of upper reals  $\overleftarrow{[0,1]}$, which may be informally thought of as the unit interval equipped with a non-Hausdorff topology. 
	
	On a technical level, our analysis brings together geometric logic and descent techniques from topos theory, distinguishing standard descent from lax descent toposes both at the level of sheaves and of the geometric theories they classify. More broadly, this paper brings into conversation two parallel distinctions: on the number-theoretic side, between Archimedean and non-Archimedean phenomena, and on the topos-theoretic side, between standard and lax descent. Looked at from a high level, these perspectives begin to converge on a common theme: how should the connected and the disconnected interact?
\end{abstract}
\maketitle

\section{Introduction}\label{sec:Intro}

\subsection{Motivation from Number Theory}\label{sec:NT-motivation}
 Much of the theory-building in number theory has been guided by a deep tension: while it is important to treat all the completions of the rationals $\mathbb{Q}$ (or indeed, any global field) symmetrically, there also exists clear disanalogies between the $p$-adics and the reals. The depth of these disanalogies can be measured by the fact that many powerful technologies work well in one setting but not the other. Indeed, as Mazur muses \cite{Maz}: 

\begin{quote}``A major theme in the development of Number Theory has been to try to bring $\mathbb{R}$ somewhat more into line with the $p$-adic fields; a major mystery is why $\mathbb{R}$ resists this attempt so strenuously.''
\end{quote}
This leads to a natural question, which will guide the investigations of this paper.

\begin{question}\label{qn:TENSION} What is the right perspective from which to understand this tension? That is, how can we treat the $p$-adics and the reals symmetrically whilst also accommodating their differences? 
\end{question}

One classical response is provided by the well-known analogy between number fields and function fields, which lies at the foundations of Arakelov geometry. If $C$ is a smooth affine curve (over an algebraically closed field $k$), it admits a unique completion to a smooth projective curve $\overline{C}$, constructed by adding a finite number of points. On the other hand, the arithmetic analogue starts with 
$$S=\Spec(\calO_K)\,,$$
the spectrum of the ring of integers $\calO_K$ of a number field $K$. The closed points of $S$ correspond to the non-Archimedean places of $K$, and so we may ``compactify'' $S$ by adding the Archimedean places.

This analogy turns out to be surprisingly robust. For instance, both settings admit a natural product formula. For any non-zero rational function $f\in k(C)$, we have
$$ \sum_{p\in\overline{C}(k)}\ord_p(f)=0\,,$$
i.e. the {\em degree} of principal divisor is 0. Similarly, for any non-zero element $f\in K$ in the number field,
$$\sum_{v\in \Lambda_K} \log|f|_v =0\,,$$
where $\Lambda_K$ denotes the set of places of $K$, and $|\cdot|_v$ is a suitably normalised absolute value at $v$. Thus, in both settings, the local data attached to a global object assemble to satisfy a global compatibility relation. This gives first evidence that both compactified spaces support a common arithmetic structure. Indeed, Arakelov theory further develops this picture: by incorporating the Archimedean places, one can develop an intersection theory of {\em arithmetic divisors}, mirroring the classical intersection theory of divisors on a Riemannian surface, see e.g. \cite{Arakelov,ArakelovDiophantine}.

Nevertheless, this analogy has a few important limitations. The first concerns the structure of the local fields. Given any point $p$ on a smooth projective curve $\overline{C}$, the completion of the function field $k(C)$ at $p$ is isomorphic to the field of formal Laurent series, 
$$\widehat{k(C)}_p\cong k((t)).$$
 Stated informally: in the function fields case, the local fields all look alike. This is in stark contrast with local fields arising from the rationals $\Q$: in particular, $\R$ is not isomorphic to any $\Q_p$, and $\Q_p\not\cong \Q_q$ for distinct primes $p$ and $q$. 
 
 There is also a fundamental difference at the global level. The smooth compactification of an affine curve is a genuinely geometric construction: the added points become part of a smooth projective curve with a global geometric structure. By contrast, the Arakelov compactification of $\Spec(\calO_K)$ simply adjoins the set of Archimedean places to the non-Archimedean ones. Thus, while there are compelling reasons to morally view the Arakelov compactification as a compactified affine curve, the function field analogy itself does not supply a geometric space whose underlying points recover the places.

These issues frame the central perspective of the present paper: rather than beginning with the set of places of $\Q$ and asking how to equip it with a geometric structure, we ask whether the space itself can be recovered from the algebraic and topological structure implicit within the places themselves. As we shall later see, geometric logic, with its deep connections to topos theory, provides powerful yet sensitive tools for extracting the required information. To develop this remark, let us shift gears and first explain the perspective from logic.

\subsection{The View from Logic} The existence of serious interactions between number theory and logic may come as a surprise to the uninitiated, but they are certainly not new.  To quote some recent successes, one might point to applications of $o$-minimality to the Andr\'{e}-Oort conjecture \cite{Pil11,Tsi18}, or perhaps the use of model-theoretic tools to establish the topological tameness of Berkovich spaces \cite{HruLoe}. These breakthrough results not only underscore the power of logical tools in other fields, they also add a deep nuance to our understanding of logical complexity. 

Stability is the classic model-theoretic frontier: structures which are {\em stable} are regarded as well-behaved, whereas {\em unstable} structures typically signal some form of chaos or complexity. Yet many structures arising naturally in the model theory of valued fields are unstable, whilst still exhibiting significant forms of tameness. At a high level, much of the recent work in the area can be understood as seeking out new logical frameworks in which this tameness becomes visible (e.g. stable domination, continuous logic etc.). A particularly relevant example for us is the framework of {\em Globally Valued Fields} \cite{GVF}, introduced by Ben Yaacov and Hrushovski as the theory of fields with multiple valuations satisfying a global product formula. This framework can be understood as providing a model-theoretic answer to Question~\ref{qn:TENSION}, but notice that it moves beyond classical first-order logic: globally valued fields are axiomatised in unbounded continuous logic, as developed in \cite{BY08}. 

\smallskip

This paper takes a related but different approach through {\em geometric logic}. Geometric logic differs substantially in character from the classical first-order logic that underlies much of model theory. Syntactically, geometric formulas are built from \textbf{finite} conjunctions, \textbf{arbitrary} disjunctions, and existential quantification, but do {\em not} allow \textbf{negation} in general. This mirrors how the opens of a topological space are closed under \textbf{finite} intersections and \textbf{arbitrary} unions, but not necessarily closed under \textbf{complements}. This gives rise to an unusual logic -- one which is positive, infinitary, and possesses an intrinsically topological character. 

These syntactic differences have significant consequences. For one, the compactness theorem is no longer available since the logic is now infinitary. Moreover, geometric logic is incomplete: for instance, there exists non-trivial geometric theories having no models in $\Set$.\footnote{For those familiar with locale theory, this corresponds to the fact that not all frames are spatial; see \cite[\S 2.4]{Vi07}.} For the model theorist, this signals that many familiar tools of first-order model theory do not translate into our setting.\footnote{This is in contrast to, e.g. unbounded continuous logic, which still has a compactness theorem \cite[\S 3]{BY08}.} For the topos theorist, however, this failure of completeness points in a different direction: one must enlarge the class of models beyond those found in $\Set$ and consider models in arbitrary Grothendieck toposes. This key shift in perspective leads to a fundamental structure theorem in topos theory, which we use throughout this paper:

\begin{theorem}[{{\cite[D3.1]{Elephant}, \cite[\S 5.3]{Vi07}}}]\label{thm:structure-thm}  Every geometric theory has a classifying topos. Conversely, every Grothendieck topos is the classifying topos of some geometric theory.
\end{theorem}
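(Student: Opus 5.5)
The plan is to prove the two directions separately, using syntactic sites in one direction and Diaconescu's theorem in both.

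For the first direction, fix a geometric theory $\thT$ over a signature $\Sigma$. I will build its syntactic category $\calC_\thT$. Its objects are geometric formulas-in-context $\{\vec{x}.\phi\}$, up to renaming of variables. Its morphisms are $\thT$-provable-functional formulas $\theta(\vec{x},\vec{y})$, taken modulo $\thT$-provable equivalence. The first check is that $\calC_\thT$ is a geometric category: it has finite limits, images, stable unions of subobjects, and those unions are stable under pullback. I equip it with the geometric coverage $J_\thT$, in which a family $\{\theta_i\colon\{\vec{y}_i.\psi_i\}\to\{\vec{x}.\phi\}\}$ covers exactly when $\thT$ proves $\phi\vdash_{\vec{x}}\bigvee_i(\exists\vec{y}_i)\theta_i$. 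The candidate classifying topos is $\Set[\thT]:=\mathrm{Sh}(\calC_\thT,J_\thT)$, and I take as generic model the image of the canonical $\Sigma$-structure under the Yoneda embedding followed by sheafification.

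The key step, and the main obstacle, is to show that for every Grothendieck topos $\calE$ the category of geometric morphisms $\calE\to\Set[\thT]$ is naturally equivalent to $\thT\text{-Mod}(\calE)$. I will do this in three stages. First, by Diaconescu's theorem, geometric morphisms $\calE\to\mathrm{Sh}(\calC,J)$ correspond to functors $\calC\to\calE$ that are flat and $J$-continuous. Second, for $\calC=\calC_\thT$ these are exactly the functors that preserve finite limits (since $\calC_\thT$ is cartesian) and send $J_\thT$-covering families to jointly epimorphic families. Third, I identify such functors with models of $\thT$ in $\calE$. In one direction, a model $M$ gives the functor $\{\vec{x}.\phi\}\mapsto[\![\vec{x}.\phi]\!]_M$. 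In the other, a functor $F$ gives a model by evaluating $F$ on the sorts, function symbols and relation symbols. The real work is soundness of geometric logic in $\calE$: the interpretation must preserve finite limits and send covers to epimorphic families, and this needs pullback-stability of images and unions in $\calE$. I also have to check that these assignments are mutually inverse and natural in $\calE$.

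For the converse, let $\calE\simeq\mathrm{Sh}(\calC,J)$ for a small site. I write down the theory $\thT_{\calC,J}$ of $J$-continuous flat functors on $\calC$. It has a sort for each object, a function symbol for each arrow, and axioms of three kinds: functoriality, flatness (nonemptiness, joint amalgamation of pairs, and equalising of parallel pairs, each expressed as a geometric sequent with existentials), and a covering axiom $\top\vdash_x\bigvee_i\exists y_i\,(f_i(y_i)=x)$ for each $J$-cover. Models of this theory in $\calF$ are exactly the flat $J$-continuous functors into $\calF$. By Diaconescu again these correspond naturally to geometric morphisms $\calF\to\calE$, so $\calE$ classifies $\thT_{\calC,J}$.
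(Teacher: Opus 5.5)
Your proposal is correct and follows the standard argument of the sources the paper cites; the paper gives no proof of its own beyond the references to Johnstone's Elephant (D3.1) and Vickers. That argument is the same as yours: sheaves on the syntactic site together with Diaconescu's theorem in one direction, and the theory of $J$-continuous flat functors on a small site in the other. One technicality should be made explicit: because of infinitary disjunctions, $\calC_\thT$ has a proper class of objects and is only \emph{essentially} small (every geometric formula is provably equivalent to a disjunction of a set of regular formulas in the same context), and this is what licenses applying Diaconescu's theorem and makes $\mathrm{Sh}(\calC_\thT,J_\thT)$ a Grothendieck topos.
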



We shall leave the topos-theoretic details as a dark grey box, if not a black one. In broad strokes, the structure theorem gives two complementary ways of viewing a Grothendieck topos: 
\begin{align*}
 \textbf{Sheaf-theoretic.} &\qquad \E\simeq \textbf{Sh}(\calC,J),\,\text{a category of sheaves on a site;}\\
 \textbf{Logical.} &\qquad \E\simeq \baseS[\thT],\,\text{the classifying topos of a geometric theory}\,\thT\,.
\end{align*}
The first perspective originates in algebraic geometry, while the second allows us to regard the topos as a generalised space of models. In more detail: the {\em classifying topos} $\baseS[\thT]$ is characterised by the universal property that, for any Grothendieck topos $\calF$, there is a natural equivalence of categories 
$$\textbf{Geom}(\calF,\baseS[\thT])\simeq \thT\mathrm{-mod}(\calF)\,.$$
That is, {\em geometric morphisms} $f\colon \calF\to \baseS[\thT]$ correspond to {\em $\thT$-models} in $\calF$. Here, geometric morphisms should be viewed as generalised points, analogous to how continuous maps $\ast\to X$ correspond to points of a topological space $X$. This motivates the following informal picture, which we use throughout this paper.

\begin{definition}[Point-free Space]\label{def:ptFREEspace}\hfill
	\begin{enumerate}[label=(\roman*)]
		\item A \emph{(point-free) space} is described by a geometric theory $\thT$, whose models play the role of its points.\footnote{More precisely, the associated space is represented by a pseudo-functor $[\thT]\colon\mathfrak{Top}^{\opp}\to\mathfrak{CAT}$ , assigning to each Grothendieck topos $\calF$ its category of $\thT$-models; see \cite[Section B4.2]{Elephant} for details.}
		To emphasise this intuition, we write ``$x\in X$'' to mean that $x$ is a model of the theory associated to the point-free space $X$.
		
		\item 
		A {\em map} $f\colon X\rightarrow Y$ of point-free spaces is defined by a {\em geometric construction} assigning to each point $x\in X$ a point $f(x)\in Y$; this assignment is functorial in $x$.
	\end{enumerate}	 
\end{definition}

The lay reader may, without too much harm, think of a {\em point-free space} as an ordinary topological space, with {\em point-free maps} playing the role of continuous maps between them. Indeed, our suggestive point-set notation is meant to reinforce this intuition. Of course, this requires interpretation: since geometric logic is incomplete, a point-free space may not have any $\Set$-based points. Here ``$x\in X$'' really refers to the {\em generic model} of the corresponding theory, corresponding to the identity map $\id\colon X\to X$; when we reason pointwise, we are thus using point-set notation as shorthand for reasoning with these generic models. For more context behind the point-set notation, see \cite[Remark 5.3]{Moerdijk}.


The main subtlety is that, since points are now $\thT$-models as opposed to mere elements of a set, maps between point-free spaces are given by modifications of $\thT$-models via {\em geometric constructions}.  Specifically, these are constructions preserved under pullback along geometric morphisms, constituting a strict regime of {\em constructive mathematics} we call  ``geometric mathematics'' or ``geometricity''.\footnote{Categorically, geometric constructions are built from finite limits and arbitrary colimits.}  Here is the methodological upshot: we may reason about point-free spaces using ``point-set intuition'' provided that the arguments we make adhere to the constraints of geometricity (e.g. no invoking Law of Excluded Middle or other classical principles). For additional details, particularly regarding the topos theory underlying Definition~\ref{def:ptFREEspace}, we refer to \cite{Vi07,VickersPtfreePtwise}.

This unusual marriage between topology and logic gives rise to a quite different regime of topology, what we call \emph{point-free topology}.\footnote{There are various usages of the term ``point-free topology'' in the literature (e.g. in locale theory, formal topology), but to our knowledge these are all subsumed by the topos-theoretic definition -- see e.g. \cite{ViSublocales}.} In classical point-set topology, a space begins with an underlying set of points, which is then equipped with additional topological structure. By contrast, point-free topology reverses the order: the points are models of a geometric theory, and the theory itself determines the corresponding point-free space through its classifying topos. In particular, the underlying set of points is no longer the primitive data, but rather one manifestation of a richer geometric framework. It is this shift in perspective that allows us to reconsider Question~\ref{qn:TENSION} without taking the underlying set of places as the starting point, in effect pulling the problem away from the set theory. 

 
\subsection{Results}\label{sec:results} The present paper builds on previous work \cite{NV,NVOstrowski}, and applies descent techniques to study the places of $\Q$. A striking feature of our approach is that, rather than treating the places as a discrete set of points, we ask how they might be characterised as a point-free space. To set up the main theorems of this paper, we first review some of our previous results, fixing a few conventions along the way.

\begin{convention} Throughout this paper, and unless stated otherwise, the term ``topos'' means  {\em Grothendieck} topos, while ``space'' and ``map'' mean {\em point-free} space and {\em point-free} map in the sense of Definition~\ref{def:ptFREEspace}. 
\end{convention}

\begin{convention}\label{conv:geometricity} The term ``geometric'' is used in the sense of {\em geometric constructions}, as introduced above. Moreover, all our main results are proved {\em geometrically}, i.e. we provide a {\em constructive} proof within geometric mathematics valid over any elementary base topos $\baseS$ with nno. 	
\end{convention}

\begin{convention} Although Question~\ref{qn:TENSION} can be stated for general number fields $K$, this paper will focus on the special case where $K=\Q$.
\end{convention}

A useful starting point is the Dedekind reals. In geometric logic, one can axiomatise the theory of Dedekind reals, whose $\Set$-based models recover the usual topological space of Dedekind reals \cite{Vi3}.\footnote{This is another departure from classical model theory, where Dedekind reals typically arise as {\em types} as opposed to models of a first-order theory. The language of filters can be helpful in appreciating these different manifestations of the Dedekind reals; see \cite[\S 1.1]{NgBerk} for details.} One of the main contributions of \cite{NV} was to show that one can, in the geometric context, define real exponentiation as a point-free map.

\begin{maintheorem}[{{\cite{NV}}}]\label{thm:xzetaDed} There exists an exponentiation map on the Dedekind reals
$$	\exp\colon (0,\infty)\times \R \to (0,\infty)
\text{,}$$
	satisfying the usual exponent laws
	\begin{gather}
	x^{\zeta+\zeta'}=x^\zeta x^{\zeta'}\text{, }
	\quad x^0=1
	\nonumber 
	\\
	x^{\zeta\cdot \zeta'}=(x^\zeta)^{\zeta'}\text{, }
	\quad x^1 = x \nonumber 
	\\
	(xy)^\zeta = x^\zeta y^\zeta\text{, }
	\quad 1^\zeta = 1. \nonumber
	\end{gather}
\end{maintheorem}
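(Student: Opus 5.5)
The plan is to build the map in stages, each stage a geometric construction on generic points, as Definition~\ref{def:ptFREEspace} requires. A Dedekind real is a pair of a lower cut $L$ and an upper cut $R$ of rationals, subject to the usual geometric axioms: each cut is inhabited and rounded, the two are disjoint, and they are located. Constructing $x^\zeta$ therefore means giving geometric conditions on rationals $q$ under which $q\in L_{x^\zeta}$ or $q\in R_{x^\zeta}$. I would split the construction into three maps and compose them, since composites of maps are maps:
\begin{align*}
\log\colon (0,\infty)\to\R, \qquad \text{multiplication}\colon \R\times\R\to\R, \qquad \exp\colon\R\to(0,\infty),
\end{align*}
and set $x^\zeta:=\exp(\zeta\cdot\log x)$.

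Multiplication of Dedekind reals is standard in geometric mathematics. For positive reals, $q$ lies in the lower cut of $xy$ iff $q<ab$ for some rationals $a\in L_x$, $b\in L_y$; the general case is handled by splitting on signs via interval arithmetic, using rational endpoints in all four combinations. For $\exp$, I would use monotone approximation by rationals: $q\in L_{\exp(\zeta)}$ iff there are a rational $r\in L_\zeta$ and $n\in\N$ with $q<\sum_{k\le n} r^k/k!$. The upper cut is defined by rational upper bounds, for instance via $(1-r/n)^{-n}$ for $r\in R_\zeta$ and $n>r$. Roundedness and disjointness then follow from rational inequalities. For $\log$, rather than inverting $\exp$ abstractly, I would define it directly by cuts: $q\in L_{\log x}$ iff $\exp(q)<x$. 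Since $\exp(q)$ is a real, this condition unpacks to the existence of a rational $s$ with $s\in R_{\exp(q)}$ and $s\in L_x$, which is geometric. The upper cut is defined symmetrically.

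The exponent laws then reduce to the identities $\exp(a+b)=\exp(a)\exp(b)$, $\exp(0)=1$, and $\log\exp=\id$, $\exp\log=\id$. To prove each one, I would use that two real-valued maps agree iff their cuts agree. It suffices to check agreement on rational inputs and use continuity, which here means monotonicity together with the rational approximations built into the cuts, to extend to the generic point. For example, $x^{\zeta+\zeta'}=\exp((\zeta+\zeta')\log x)=\exp(\zeta\log x)\exp(\zeta'\log x)$ follows from distributivity and the additivity of $\exp$. Similarly $(x^\zeta)^{\zeta'}$ reduces to associativity of multiplication through $\log\exp=\id$, and $(xy)^\zeta$ reduces to $\log(xy)=\log x+\log y$.

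I expect the main obstacle to be establishing that $\exp$ and $\log$ are well-defined maps, in particular \emph{locatedness}: for rationals $q<q'$, either $q\in L$ or $q'\in R$. Classically this is immediate, but here it must come from explicit error bounds without excluded middle. For $\exp$ I would obtain it from the tail estimate comparing the partial sums with $(1-r/n)^{-n}$, together with locatedness of $\zeta$: pick rationals $r<r'$ close together straddling $\zeta$, and use uniform continuity of $\exp$ on bounded rational intervals. Locatedness of $\log$ would then follow from the strict monotonicity of $\exp$ with a rational modulus, giving $\exp(q')-\exp(q)\ge q'-q$ for $q<q'$ when $q\ge 0$; for negative $q$ one rescales.
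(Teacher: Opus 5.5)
\paragraph{The gap: the lower cut of $\exp$.} Your construction of $\exp$ fails as written, at the lower cut. Every lower cut $L_\zeta$ contains arbitrarily negative rationals. For $r<0$ the Taylor partial sums $\sum_{k\le n}r^k/k!$ are neither lower bounds for $e^r$ nor monotone in $r$: already $n=2$ gives $1+r+r^2/2\to\infty$ as $r\to-\infty$. So your clause makes $L_{\exp(\zeta)}=\Q$ for \emph{every} $\zeta$, and disjointness with your upper cut fails. The claim that roundedness and disjointness ``follow from rational inequalities'' is therefore false.

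Restricting to $r\ge 0$ does not help. You cannot decide whether $L_\zeta$ meets $[0,\infty)$, and for $\zeta<0$ you would be left with no positive lower approximations, so locatedness would fail.

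The repair is to use $(1+r/n)^n$ with $n>-r$. This is a lower bound for $e^r$, increasing in $r$ and in $n$ on that range, and it pairs with your upper bound $(1-r/n)^{-n}$. Bernoulli's inequality then gives an explicit modulus:
\[
(1-r/n)^{-n}-(1+r/n)^n\;\le\;(1+r/n)^n\cdot\frac{r^2/n}{1-r^2/n}\qquad\bigl(n>\max(|r|,r^2)\bigr).
\]
Together with locatedness of $\zeta$, this is what locatedness of $\exp(\zeta)$ needs.

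The same oversight affects your formula for the product of positive reals: take $a\in L_x$ and $b\in L_y$ both very negative, and $ab$ is large. There the four-corner interval formula you also mention is the correct definition, and it should be used throughout.

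\paragraph{Comparison with the cited route.} With that repair your argument goes through, and it is a genuinely different route from the one behind the statement. The paper gives no proof of its own and cites \cite{NV}. Judging from how this paper draws on that work (rounded ideal completions, Lemma~\ref{lem:liftingsheaves} as a generalisation of \cite[Lemma 1.29]{NV}, ``fixing $x$'' as in Convention~\ref{conv:fixingx}, gluing intervals along a common point), that construction builds $x^\zeta$ directly rather than as $\exp(\zeta\log x)$. It starts from rational exponents, uses monotonicity in each argument to extend maps from $\Q$ to one-sided reals, and then pairs lower and upper reals into Dedekinds. That route needs no convergence estimates, only order-theoretic universal properties. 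Yours concentrates the analysis in locatedness of $\exp$ and the functional equation $\exp(a+b)=\exp(a)\exp(b)$, after which the exponent laws are essentially formal, as you show.

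For your ``check on rationals and extend by continuity'' step, there is a clean point-free justification. $\R$ has closed diagonal (the complement of the open apartness $x<y\vee y<x$), and $\Q^n\to\R^n$ has dense image. Hence two maps into $\R$ or $(0,\infty)$ that agree on rational inputs are equal.
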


The relevance of real exponentiation lies in its role in defining the places of $\Q$. Classically, an {\em absolute value on $\Q$} is defined as a multiplicative norm
$$|\cdot|\colon \Q\to [0,\infty)\,$$
satisfying the triangle inequality. Each absolute value determines a metric completion of $\Q$, giving rise to the familiar local fields $\R$ and $\Q_p$. It is therefore natural to ask when two absolute values determine the same completion up to topological isomorphism. This turns out to have an elegant answer: two absolute values $|\cdot|_1,|\cdot|_2$ determine isomorphic completions just in case there exists $\alpha\in(0,1]$ such that $$|\cdot|_1=|\cdot|_2^\alpha \qquad\text{or}\qquad |\cdot|_2=|\cdot|_1^\alpha\,.$$
A {\em place} is defined as an equivalence class of absolute values under this relation. 

Our paper \cite{NVOstrowski} re-examines this picture in the geometric context. We first show that the theory of absolute values on $\Q$ can be axiomatised in geometric logic.\footnote{Compare this, perhaps, with the axiomatisation of globally valued fields in unbounded continuous logic \cite{GVF}.} The geometric account of exponentiation from Theorem~\ref{thm:xzetaDed} then provides the operation needed to formulate the equivalence relation defining places. We then give a geometric proof of several Ostrowski-type theorems, in particular the following:

\begin{maintheorem}[{{\cite[Theorem 5.8]{NVOstrowski}}}]\label{thm:ostrowskiQ} Let $\gav$ be a non-trivial absolute value on $\mathbb{Q}$. 
\underline{Then}, one of the following must hold:
	\begin{enumerate}[label=(\roman*)]
		\item $\gav=\gav^\alpha_{\infty}$ for some $\alpha\in(0,1]$; or
		\item $\gav=\gav_p^\alpha$ for some $\alpha\in (0,\infty)$ and some (unique) prime $p\in\mathbb{N}_+$. 
	\end{enumerate} 
Moreover, the spaces of absolute values take the form
 $$[av_A]\cong (0,1]\qquad \text{and}\qquad [av_{NA};p]\cong (0,\infty),$$ where $[av_A]$ denotes the space of (non-trivial) Archimedean absolute values, and $[av_{NA};p]$ denotes the space of non-Archimedean absolute values associated to prime $p$.
\end{maintheorem}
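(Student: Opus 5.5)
I will follow the classical proof of Ostrowski's theorem, but keep every step geometric: no excluded middle and no case split on undecidable properties. The point is to replace the classical dichotomy ``Archimedean or not'' by a split into two open subspaces, and then to identify each piece.

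\textbf{Step 1: the two open pieces.} The space of non-trivial absolute values is covered by two opens. One is where $|n|>1$ for some $n\in\N$, which gives the Archimedean part $[av_A]$. The other is where $|n|\le 1$ for all $n$, which on non-trivial absolute values is equivalent to $|p|<1$ for some prime $p$. This second condition is an existential statement, hence geometric. Classically the two pieces are complementary. Geometrically I will show that each is determined by the corresponding geometric formula, and that the formulas are provably disjoint and jointly cover. Disjointness uses the standard argument: if $|n|\le 1$ for all $n$, then expanding $|a+b|^k$ binomially gives $|a+b|\le (k+1)^{1/k}\max(|a|,|b|)$, and letting $k\to\infty$ gives the ultrametric inequality, so no $|n|$ exceeds $1$.

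\textbf{Step 2: the Archimedean case.} Given $|\cdot|$ with some $|n_0|>1$, I take $\alpha=\log|n_0|/\log n_0$, computed with the exponentiation map of Theorem~\ref{thm:xzetaDed}. The classical estimate expands $m$ in base $n$ and shows $|m|^{1/\log m}=|n|^{1/\log n}$ for all $m,n\ge 2$. From this, $\alpha$ is independent of $n_0$, so $\alpha$ is a well-defined Dedekind real. The triangle inequality $|2|\le 2$ gives $\alpha\le 1$, and $|n_0|>1$ gives $\alpha>0$. The main obstacle is here. The classical limit argument must be rephrased as inequalities between rationals: for all $m,n,k$ one has $|m|^k\le (1+k\log_n m)\,(n-1)\max(1,|n|)^{k\log_n m+1}$. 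I then check that these inequalities define the lower and upper cuts of $\alpha$ consistently, so that the resulting real is located geometrically. Conversely, $\alpha\mapsto\gav_\infty^\alpha$ is a geometric map $(0,1]\to[av_A]$, and the exponent laws show the two constructions are mutually inverse. This gives $[av_A]\cong(0,1]$.

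\textbf{Step 3: the non-Archimedean case.} Here $\{n\in\Z: |n|<1\}$ is a prime ideal $p\Z$. Uniqueness of $p$ is geometric because $\N$ is decidable: if $|p|<1$ and $|q|<1$ with $p\neq q$, then writing $1=ap+bq$ and applying the ultrametric inequality gives $|1|<1$, a contradiction. Since every integer prime to $p$ has absolute value $1$, I set $\alpha=-\log|p|/\log p\in(0,\infty)$ and get $\gav=\gav_p^\alpha$. The inverse map is $\alpha\mapsto\gav_p^\alpha$. Each fibre is therefore $(0,\infty)$, which gives $[av_{NA};p]\cong(0,\infty)$.

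Throughout, I will verify that each map is built from the geometric real operations of Theorem~\ref{thm:xzetaDed}, so that the whole argument is valid over any base topos with nno.
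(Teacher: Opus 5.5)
Your overall strategy works and differs from the route the paper relies on, but Step~1 has two gaps that the rest of the argument depends on.

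\textbf{Disjointness.} The binomial argument is correct and needed: once all $|n|\le 1$, it gives the ultrametric inequality. It does not prove disjointness, though, since ``no $|n|$ exceeds $1$'' is just its hypothesis restated. The non-Archimedean piece has to be cut out by the open formula $\exists p.\,|p|<1$, not by $\forall n.\,|n|\le 1$, which is not open. So Step~3 actually needs the implication $|p|<1\vdash |m|\le 1$ for every $m$. Without it you have no ultrametric inequality in Step~3. You therefore have no proof that $\{n : |n|<1\}$ is closed under addition, that $|m|=1$ for $p\nmid m$, or that $p$ is unique (your $1=ap+bq$ argument also needs $|a|,|b|\le 1$).

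The fix is your own Step~2 estimate applied with $n=p$. Passing to the limit via rational inequalities gives $|m|\le \max(1,|p|)^{\log_p m}=1$. This also yields the genuine disjointness sequent $|p|<1\wedge |n|>1\vdash\bot$.

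\textbf{Covering.} The covering claim is exactly where the classical proof uses excluded middle, and you announce it without proving it. You need non-triviality in a geometric form, e.g.\ $|a|\neq|b|$ (apartness) for some non-zero integers $a,b$. Then, say from $|a|<|b|$, locatedness gives $|a|<1$ or $|b|>1$. In the first case, factor $a$ into primes. A finite product of non-negative reals that is $<1$ has some factor $<1$ (again by locatedness), which produces a prime with $|p|<1$.

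\textbf{Minor point in Step~2.} Locatedness of $\alpha=\log|n_0|/\log n_0$ comes for free from any single witness $n_0$. What the rational inequalities buy you is independence of $n_0$. You also need them first to show $|n|>1$ for every $n\ge 2$, so that the $\max$ can be dropped.

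\textbf{Comparison with the paper.} With these repairs your argument goes through. The paper does not reprove this theorem; it imports it from Ng--Vickers. There, the Dedekind-valued statement on $\Q$ is recovered from a global classification of upper-real-valued multiplicative seminorms on $\Z$. Their space is identified with prime ideals in $\ISpec(\Z)$, with the coZariski topology, paired with upper reals in $[-\infty,1]$. Your route is more elementary and targeted: it stays with Dedekind-valued non-trivial absolute values throughout and replaces the classical case split by an explicit open cover. The cited route costs more, but it places the trivial absolute value and the $\mathbb{F}_p$-seminorms in the same space. That is what brings out the coZariski topology and the trivial-place subtleties discussed in the paper. Your approach sidesteps these by restricting to the non-trivial part from the start.
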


This classification frames the strategy of the present paper. Rather than constructing the entire space of places at once, we first isolate a single place and study its associated topos. As anticipated in Section~\ref{sec:NT-motivation}, one might expect each place to correspond to a singleton, carrying no further intrinsic structure. Our first theorem confirms this expectation in the non-Archimedean case. 

\begin{maintheorem}[Theorem~\ref{thm:DESCENTsinglePRIME}]\label{thm:thmC} For any non-Archimedean place of $\Q$, let $\calD$ denote its associated topos. Then, 
	$$\mathcal{D}\simeq\baseS\{\ast\}.$$
That is, $\calD$ is equivalent to the category of sheaves on a singleton, and so every non-Archimedean place is represented by a singleton space.
\end{maintheorem}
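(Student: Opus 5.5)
The plan is to realise $\calD$ as a descent topos. By Theorem~\ref{thm:ostrowskiQ}, the space of non-Archimedean absolute values attached to the prime $p$ is $[av_{NA};p]\cong(0,\infty)$, parametrised by the exponent $\alpha$ in $\gav=\gav_p^\alpha$. The equivalence relation defining the place is the action of rescaling, $\gav\mapsto\gav^\beta$. So I would present $\calD$ as the topos of descent data for $X=[av_{NA};p]$ along the groupoid $G\rightrightarrows X$ with $G=X\times_{\mathrm{place}}X$. By the exponent laws of Theorem~\ref{thm:xzetaDed}, $G$ is the action groupoid of the multiplicative group $(0,\infty)$ acting on $X\cong(0,\infty)$ by $\alpha\mapsto\alpha\beta$.

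Geometrically, the pair of maps $(\alpha,\alpha')\mapsto(\alpha,\alpha'/\alpha)$ gives an isomorphism $G\cong X\times(0,\infty)$. It is defined geometrically using the exponentiation map and the uniqueness of $\alpha$ given $\gav$. Under this isomorphism, $G$ is equivalent to the \emph{codiscrete} groupoid $X\times X\rightrightarrows X$: any two points of $X$ are related by exactly one arrow, namely $\gav_p^\alpha\mapsto\gav_p^{\alpha'}$ via $\beta=\alpha'/\alpha$. Equivalently, the map $X\to\{\ast\}$ has kernel pair exactly $G$.

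The descent topos $\Des(X\times X\rightrightarrows X)$ is the category of sheaves on $X$ with descent data along the kernel pair of $!\colon X\to\{\ast\}$. By effective descent for $!$, this category is $\baseS$ itself, i.e. $\baseS\{\ast\}$. Effective descent holds because $X\cong(0,\infty)$ is a non-empty open surjection onto the point: $(0,\infty)$ is overt with positive positivity predicate, since it is inhabited (e.g.\ by $1$) and locally compact. Open surjections are of effective descent by Joyal–Tierney/Moerdijk. Chaining the equivalences gives $\calD\simeq\baseS\{\ast\}$.

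The main obstacle is the middle step: checking geometrically that $G$ really is the kernel pair of $X\to\ast$, and not merely a subgroupoid of it. This requires the transition $\alpha\mapsto\alpha'/\alpha$ to be a geometric (point-free) map. It also requires uniqueness of the exponent $\alpha$ in Theorem~\ref{thm:ostrowskiQ}(ii) to hold over an arbitrary base topos, not just in $\Set$. I would handle both using the homeomorphism $[av_{NA};p]\cong(0,\infty)$, recovering $\alpha=-\log_p|p|$ continuously, together with the identity $(\gav_p^\alpha)^\beta=\gav_p^{\alpha\beta}$ from the exponent laws. A secondary check is that the relevant descent notion is ordinary descent, i.e.\ the groupoid is an equivalence relation with invertible arrows; here invertibility of $\beta\in(0,\infty)$ is what ensures we are not in the lax situation.
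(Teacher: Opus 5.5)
Your proposal is correct and follows essentially the paper's route: the paper proves a general result (Theorem~\ref{thm:TRIVIALfreeTRANSaction}) showing that a free transitive action groupoid over $M$ is isomorphic, via $\langle\pi,\M\rangle$ with inverse $(x,y)\mapsto(yx^{-1},x)$, to the \v{C}ech groupoid of $M\to\{\ast\}$, and then applies Joyal--Tierney effective descent to the open surjection $(0,\infty)\to\{\ast\}$. One small correction: local compactness is not what makes $(0,\infty)$ overt constructively. The paper instead uses that $\R$ is overt as a localic completion \cite[Corollary 6.9]{ViLocCompII} and that open subspaces of overt spaces are overt, with surjectivity coming from the section at $1$, as you say.
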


However, here comes the big surprise. When we consider the Archimedean (i.e. the real) place, the topos theory reveals a richer structure.

\begin{maintheorem}[Theorem~\ref{thm:ARCHIMEDEANPLACE}] \label{thm:thmD} Let $\D'$ denote the topos of the Archimedean place of $\Q$. Then,
	 $$\D'\simeq\baseS\overleftarrow{[0,1]},$$
where $\overleftarrow{[0,1]}$ denotes the space of {\em upper reals} bounded between 0 and 1, and $\baseS\overleftarrow{[0,1]}$ its category of sheaves.
\end{maintheorem}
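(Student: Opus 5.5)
The plan is to realise $\D'$ as a descent topos and compute it. By Theorem~\ref{thm:ostrowskiQ}, the space of non-trivial Archimedean absolute values is $[av_A]\cong(0,1]$, through $\alpha\mapsto\gav_\infty^\alpha$. The place is the quotient of this space by the relation ``$\gav_1=\gav_2^\beta$ for some $\beta\in(0,1]$''. This relation is a preorder, not a groupoid, so the topos of the place should be the lax descent topos: sheaves $F$ on $(0,1]$ together with a non-invertible transition map along the relation, subject to cocycle conditions. Concretely, I would take the relation as the space
\[R=\{(\alpha,\alpha')\in(0,1]^2 : \alpha'\le\alpha\},\]
where $\gav_\infty^{\alpha'}=(\gav_\infty^\alpha)^\beta$ with $\beta=\alpha'/\alpha$. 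This identification uses the exponent laws of Theorem~\ref{thm:xzetaDed}. I would then write $\D'$ as the category of pairs $(F,\theta)$, where $F\in\baseS(0,1]$ and $\theta\colon \pi_1^*F\to\pi_2^*F$ over $R$ satisfies the unit condition on the diagonal and the composition condition on $R\times_{(0,1]}R$. This mirrors the non-Archimedean case (Theorem~\ref{thm:thmC}). There the relation is the full group $(0,\infty)$ acting by powers, so it is transitive with connected stabilisers, ordinary descent applies, and the quotient is a point.

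The second step is to identify this lax descent data with a known space, and I would work logically. A model of the lax-descent theory should amount to a point of $(0,1]$ known only up to decrease, that is, a rounded up-closed set of rationals: an upper real. So I would define the geometric map $(0,1]\to\overleftarrow{[0,1]}$ sending $\alpha$ to its upper cut $\{q : \alpha<q\}$. Then I would check that it is lax-invariant: for $(\alpha,\alpha')\in R$ there is a specialisation $\alpha'\sqsubseteq\alpha$ in the specialisation order of the upper reals. Note that $0$ appears in the target because an infimum of values in $(0,1]$ can be $0$. Pullback along this map gives a functor $\Phi\colon\baseS\overleftarrow{[0,1]}\to\D'$, and the claim is that $\Phi$ is an equivalence.

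To prove that, I would show that $(0,1]\to\overleftarrow{[0,1]}$ is an effective lax descent morphism, meaning $\overleftarrow{[0,1]}$ is the lax colimit (the lax coequaliser of $R\rightrightarrows(0,1]$) in toposes. The route is through frames of opens, which suffices since everything here is localic. The opens of $\overleftarrow{[0,1]}$ are generated by $\{x : x<q\}$, and these pull back to the opens $(0,q)\cap(0,1]$ of $(0,1]$. Lax-invariant opens of $(0,1]$, meaning those $U$ with $\alpha\in U$ and $\alpha'\le\alpha$ implying $\alpha'\in U$, are exactly the down-closed opens. Every down-closed open is a union of sets $(0,q)$, and this matches the frame of $\overleftarrow{[0,1]}$. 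Extending from opens to all sheaves then needs a descent theorem: open surjections are effective for lax descent, as in Moerdijk and Vickers, and the map $(0,1]\to\overleftarrow{[0,1]}$ is open because images of the basic opens $(a,b)$ are $\{x<b\}$. Surjectivity has to be checked on the point $0$, which is covered as the infimum of a generic sequence, or handled by a locale-level argument.

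I expect that last step to be the main obstacle. Both the openness and surjectivity, and the argument that lax descent along this map recovers exactly $\baseS\overleftarrow{[0,1]}$, have to be carried out geometrically, with particular care at the boundary point $0$, which is not in the image on points.
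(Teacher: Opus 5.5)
\textbf{Gap: the step that fails is the claim that $r$ is open.} Your set-up is sound and agrees with the paper's second functor. $R$ is isomorphic to the paper's arrow space $(0,1]\times(0,1]$ via $(\alpha,\beta)\mapsto(\beta,\alpha\beta)$. Pulling back along $r\colon(0,1]\to\overleftarrow{[0,1]}$, $\alpha\mapsto\{q:\alpha<q\}$, with descent data from specialisation, is exactly the paper's $\K$. (With the paper's convention the specialisation is $r(\alpha)\sqsubseteq r(\alpha')$ for $\alpha'\le\alpha$, not $\alpha'\sqsubseteq\alpha$.)

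The open $\{x<b\}$ is not the image of $(a,b)$. It is only the smallest open containing it, i.e.\ $r_!(a,b)$, and Frobenius reciprocity fails. Take $U=(\tfrac12,1)$ and $V=\{x<\tfrac14\}$. Then $r_!(U\wedge r^{-1}V)=r_!(\emptyset)=\emptyset$, whereas $r_!U\wedge V=\{x<1\}\wedge\{x<\tfrac14\}=\{x<\tfrac14\}$, which contains $r(\tfrac18)$.

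This cannot be repaired. $r$ is a monomorphism of locales, since a Dedekind real is determined by its right section, so its kernel pair is the diagonal. If $r$ were an open surjection, Theorem~\ref{thm:JTopenSURJ} would make $r^*\colon\baseS\overleftarrow{[0,1]}\to\baseS(0,1]$ an equivalence, hence $r^{-1}$ a frame isomorphism. It is not: every $r^{-1}V$ is down-closed, and $(\tfrac12,1)$ is not.

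Separately, ``open surjections are effective for lax descent'' is not something you can simply cite here. Your reformulation amounts to saying that $r$ is of effective lax descent for $R\rightrightarrows(0,1]$, which is in effect the comma object of $r$ with itself. The paper explicitly leaves open (Section~\ref{sec:strangewoods}) whether Theorem~\ref{thm:thmD} follows from any general lax descent theorem.

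\textbf{What is missing: showing that $\D'$ is localic.} Your computation of the lax-invariant opens is correct, but it only identifies the subterminal objects of $\D'$. It does not show that $\D'$ is localic, i.e.\ that your $\Phi$ is essentially surjective. That is the real content of the theorem, and the paper supplies it by a direct argument rather than a descent theorem.

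For $(F,\theta)\in\D'$ with \'etale bundle $f\colon Y\to(0,1]$, set $\theta_{\gamma'\gamma}(y):=\theta_0(y,\gamma/\gamma')$ for $\gamma\le\gamma'$. Claim~\ref{claim:descent2upperreals} shows that $\colim_{q\in I_\gamma}F(q)\to F(\gamma)$ is a bijection for every $\gamma$.
\begin{itemize}
\item \emph{Surjectivity.} Take a local section $u$ of $f$ through $x\in F(\gamma)$. The map $a\mapsto\theta_{a\gamma}(u(a))$ goes from the connected set $f(U)\cap[\gamma,1]$ to the discrete fibre $F(\gamma)$, so it is constant. By the unit condition it equals $x$, so $x$ comes from a rational $q>\gamma$ (or $q=1$).
\item \emph{Injectivity.} Suppose $\theta_{q\gamma}(y)=\theta_{q\gamma}(z)$. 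The sections $a\mapsto\theta_{qa}(y)$ and $a\mapsto\theta_{qa}(z)$ both agree with a local section through this point on an open neighbourhood of $\gamma$. Hence $y$ and $z$ are already identified at some rational $q'\in I_\gamma$ with $q\prec q'$.
\end{itemize}
This right-continuity lets the Lifting Lemma~\ref{lem:liftingsheaves} turn $F|_{\qint}$ into a sheaf on $\RIdl(\qint,\prec)\cong\overleftarrow{[0,1]}$. That gives the inverse $\J$ of $\K$, and the remaining checks are formal. Without this step, or a lax descent theorem whose hypotheses $r$ actually satisfies, your argument does not establish the equivalence.
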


Taken together, these results challenge the classical Arakelov picture of the places of $\Q$ as a kind of one-point compactification of $\Spec(\Z)$:  while the non-Archimedean places are singletons by Theorem~\ref{thm:thmC}, the Archimedean place instead carries the structure of a blurred unit interval equipped with a non-Hausdorff topology (Theorem~\ref{thm:thmD}). Moreover, we emphasise that the topos of places should not be confused with the Berkovich spectrum of $\Z$: the latter parametrises absolute values themselves, whereas here we have already passed to their equivalence classes defining places. In particular, unlike the Berkovich picture, the non-Archimedean places define singletons in our setting.

What are the implications of Theorems~\ref{thm:thmC} and \ref{thm:thmD}, e.g. for arithmetic geometry? At this critical juncture, our understanding is still incomplete and many interesting questions remain, some potentially quite deep. Nonetheless, some partial answers are explored in the final two sections of the paper. 
\begin{itemize}
	\item Section~\ref{sec:ARCHvsNONARCH} gives a topos-theoretic account of the differences between the Archimedean and non-Archimedean places. In our language, the Archimedean place exhibits non-trivial forking in its sheaves whereas the topos corresponding to a non-Archimedean place eliminates all forking behaviour. This is summarised in Conclusion~\ref{conc:forking}.
	\item  Section~\ref{sec:strangewoods} is expository, and discusses some interesting points of contact with other work in the literature. In particular, it brings into focus a theme that has implicitly guided our analysis thus far: how do the connected and disconnected interact? 
	
\end{itemize}

\subsection*{Acknowledgements} This is work from my thesis \cite{NgThesis}. It is a pleasure to acknowledge that Theorem~\ref{thm:thmD} is joint work with Steve Vickers, and to thank him for his support in incorporating it in the present paper. Thanks also to A. Connes, J.I. Burgos Gil, E. Finster, and N. Hultberg for various helpful comments.
\tableofcontents

\section{Preliminaries in Point-free Topology}\label{sec:prelim} As mentioned above, there are two complementary ways of viewing a topos: one may study a topos in terms of its objects (i.e. sheaves), or in terms of the geometric morphisms into it (viewed as generalised points or models). The guiding methodology of this paper lies in leveraging these two perspectives against each other. In particular, we mainly focus on {\em localic spaces}, a well-behaved setting where the connections between point-free and classical point-set topology become especially transparent.

Section~\ref{sec:localic} recalls the basic framework of localic spaces, introduces the localic reals and $\ISpec(\Z)$ as the key examples of our paper, and collects some technical results needed later, including the Lifting Lemma~\ref{lem:liftingsheaves}. Section~\ref{sec:Des} reviews standard and lax descent, reformulating the relevant constructions in the language of point-free topology.

\subsection{Localic Spaces}\label{sec:localic} Logic plays a primarily background role in this paper, so we will be brief on the  details. At a high level, a {\em first-order theory} $\thT$ is a formal description of a mathematical structure --- a description built from specific logical connectives according to specific rules. This paper focuses on {\em geometric logic}, a fragment of logic built out of finite conjunctions, arbitrary (possibly infinite) disjunctions, and existential quantification, whilst forbidding nested implications and universal quantification. Any first-order theory $\thT$ defined within geometric logic is called a {\em geometric theory}.

We can now unpack Definition~\ref{def:ptFREEspace}. Recall that a {\em point-free space} is presented by a geometric theory $\thT$, with its models play the role of points. Here, a $\thT$-model is
a structure $M$ satisfying the axioms of $\thT$. The appropriate morphism between such points are the {\em specialisation morphisms} $\alpha\colon M\to M'$, given by homomorphisms of $\thT$-models.\footnote{Categorically, a $\thT$-model corresponds to a geometric
	morphism between toposes, and a specialisation morphism corresponds to a
	geometric natural transformation.} A {\em map} between point-free spaces is then given by a geometric construction on models (Convention~\ref{conv:geometricity}), and such constructions are functorial with
respect to specialisation morphisms. This functoriality will play a key
role in our analysis, particularly in Section~\ref{sec:Arch}.

\subsubsection{Propositional Theories and Frames}\label{subsec:prop-th-frame} A geometric theory $\thT$ is called {\em propositional} if its signature has no sorts --- in other words, its axioms are constructed only from constant symbols, $\top$ (true), finite $\land$ and arbitrary $\bigvee$. This is directly analogous to the way the opens of a topological space are generated by finite intersections and arbitrary unions, which is made precise via the notion of a {\em frame}.

	\begin{definition}[Frame]\label{def:frame} \hfill
	\begin{enumerate}[label=(\roman*)]
		\item 	A \emph{frame} is a complete lattice $A$ possessing all small joins $\bigvee$ and all finite meets $\land$, such that the following distributivity law holds
		\[a\land \bigvee S = \bigvee \{a\land b \,|\, b\in S\} \qquad \text{for}\,\,  a\in A, S\subseteq A.\]
		\item 	A \emph{frame homomorphism} is a function between frames that preserves arbitrary joins and finite meets.
	\end{enumerate}	
\end{definition}

Clearly, the opens of any topological space $X$ form a frame, which we denote as $\Omega_X$. On the side of logic, the {\em Lindenbaum algebra} of a propositional theory $\thT$ --- i.e. the lattice of its formulae modulo provable equivalence --- is likewise a frame. We denote the Lindenbaum algebra of $\thT$ as $\Omega_\thT$, and regard its elements as the opens of the corresponding localic space. Moreover, if $\thT$ and $\thT'$ are propositional, then a point-free map between their associated spaces 
$f\colon [\thT]\to[\thT']$ correspond to frame homomorphisms between their Lindenbaum algebras $f^{-1}\colon \Omega_{\thT'}\to \Omega_{\thT}$ (note the reversal of arrow direction). For details on the underlying topos theory, see \cite[\S 2.2]{Vi07}, \cite[C1.3-4]{Elephant}.


\subsubsection{Essentially Propositonal Theories} This paper works with a slightly larger class of theories. We call a geometric theory $\thT$ {\em essentially propositional} if its classifying topos is such that $\baseS[\thT]\simeq\baseS[\thT']$ for some propositional theory $\thT$. Informally, this means that the space of $\thT$-models is equivalent to the space of $\thT'$-models, even if {\em a priori} $\thT$ has more expressive power than $\thT$. This motivates the following key definition:

\begin{definition}[Localic Space]\label{def:localic} If $\thT$ is essentially propositional, we call the space of its models a {\em localic space}. Localic spaces form a category $\Loc$, whose morphisms are the point-free maps. Whenever we wish to emphasise that a theory need not be essentially propositional, we speak of a {\em generalised space}.
\end{definition}

Localic spaces occupy a conceptual sweet spot in point-free topology, and are the primary objects of interest in this paper. On one hand, the connection with frames remains intact, and so many familiar topological notions continue to make sense in the localic setting. (In particular, sheaves on a localic space may be characterised as \'{e}tale bundles; we return to this point in Section~\ref{sec:etale-bundles}.) On the other hand, the added flexibility of allowing sorts provides a more natural language for describing mathematical structures of interest to us, whilst still retaining the point-free character of the theory.

An important source of such theories is given by geometric theories whose sorts are free algebra constructions, such as $\N$, $\Z$ or $\Q$.\footnote{This marks another departure from classical model theory. In classical first-order logic, L\"{o}wenheim-Skolem implies that no set of axioms can force a structure to be isomorphic to some fixed infinite model. In the presence of infinite disjunctions, however, this is no longer the case -- see \cite[\S 3.4]{Vi07}, in particular the analysis of the geometric theory of $\N$.} Here we will focus on two main examples: localic reals (Section~\ref{subsec:localicreals}), and the coZariski spectrum $\ISpec(\Z)$ (Section~\ref{subsec:localicprimes}).

\subsubsection{The Localic Reals}\label{subsec:localicreals} This paper uses two different types of reals: Dedekind reals and the one-sided reals. Both reals are built from the rationals but in different ways; this results in different topologies and therefore different subtleties in their analysis.

\begin{definition}[Dedekind Reals]\label{def:DedR} The localic space $\R$ is a space whose points are the Dedekind cuts of $\Q$. Concretely, a point in $\R$ is a pair $(L,R)$ of subsets of $\Q$ satisfying the axioms:
	\begin{enumerate}
		\item $L$ and $R$ are {\em inhabited}, i.e. there exists $q\in L$ and $q\in R$;
		\item $R$ is a {\em rounded upper set}, i.e. $q\in R \iff $ there exists $q'\in R$ with $q'<q$;
		\item $L$ is a {\em rounded lower set}, i.e. $q\in L\iff$ there exists $q'\in L$ with $q<q'$;
		\item $L$ and $R$ are \emph{separated} as well as {\em located}, 
		\[ q\in L,\ r\in R \Longrightarrow q<r,
		\qquad\text{and}\qquad 
		q<r \Longrightarrow q\in L\text{ or }r\in R.
		\] 
 	\end{enumerate}
{\em Convention.} We typically denote a point of $\R$ simply by $x$, rather than specifying the pair $(L_x,R_x)$. We write $q<x$ to mean $q\in L_x$ and $x<r$ to mean $r\in R_x$.
\end{definition}

Informally, a Dedekind real approximates a number from above and below. By contrast, a {\em one-sided real} approximates the number from only one direction. Classically, the two notions are essentially equivalent; in the geometric setting, however, they give rise to localic spaces equipped with different topologies (= different Lindenbaum algebras). More explicitly:

\begin{definition}[One-Sided Reals]\label{def:one-sided} There are two kinds of one-sided reals.\footnote{Notice the definition allows the upper (resp. lower) reals to be empty, which correspond to $\infty$ (resp. $-\infty$).}
	\begin{enumerate}[label=(\roman*)]
		\item The space of \emph{upper reals}, denoted $\overleftarrow{[-\infty,\infty]}$, has as points the rounded upper subsets of $\Q$.
		\item The space of \emph{lower reals}, denoted $\overrightarrow{[-\infty,\infty]}$, has as points the rounded lower subsets of $\Q$.
	\end{enumerate}
The points of either space are subsets of $\Q$, and are therefore ordered by subset inclusion. We call this the {\em specialisation order}, denoted $\sqsubseteq$. For lower reals, this agrees with the usual numerical order: 
$$x\sqsubseteq y\iff x\leq y,$$
whereas for upper reals it is reversed: 
$$x\sqsubseteq y \iff y\leq x.$$
The arrows in $\overleftarrow{[-\infty,\infty]}$ and $\overrightarrow{[-\infty,\infty]}$ are meant to indicate the direction of refinement under the specialisation order. The specialisation order in turn induces the {\em Scott topology} on the respective spaces.	
\end{definition}

\begin{remark} Of course, the term ``specialisation order'' reflects the fact that $\sqsubseteq$ defines the {\em specialisation morphisms} between the points of the one-sided reals, as defined in the introductory remarks of Section~\ref{sec:localic}.
\end{remark}

By imposing additional axioms on Definition~\ref{def:DedR} one obtains the usual interval subspaces of the Dedekind reals -- e.g. requiring the left Dedekind section $L$ to contain 0 gives the open interval of positive reals $(0,\infty)$. Similarly, one can define intervals of one-sided reals, but there is an important subtlety: these subspaces inherit the Scott topology from the original space, and must therefore be closed under directed joins along the specialisation order. In other words, every one-sided interval must be closed at the arrowhead, and so for instance we write $\overleftarrow{[0,1]}$ --- as opposed to $\overleftarrow{(0,1]}$ --- for the space of upper reals between 0 and 1.

\subsubsection{The Lifting Lemma} Although perhaps unusual to the classical mathematician, the one-sided reals arise as natural examples of a well-known construction in domain theory known as \textit{rounded ideal completions} \cite{Smyth,Vi8}.

\begin{definition}[Rounded Ideal Completions]\label{def:Rstructure}
	Let $(Y,\prec)$ be a set equipped with a dense transitive order $\prec$.\footnote{Thus, whenever $q\prec q'$, there exists $q''$ with $q\prec q''\prec q'$. We emphasise that we do not require $\prec$ to be a strict order.}
	A subset $I\subseteq Y$ is an \emph{ideal} if it is downward closed and every finite subset of $I$ has an upper bound in $I$. If
	\[
	I_q:=\{q'\in Y\mid q'\prec q\}
	\]
	is an ideal for every $q\in Y$, we call $(Y,\prec)$ an \emph{$R$-structure}. The \emph{rounded ideal completion} of an $R$-structure is the space
	\[
	\RIdl(Y,\prec)
	\]
	of all ideals of $Y$.  $\RIdl(Y,\prec)$ carries a specialisation order $\sqsubseteq$ defined by subset inclusion, which in turn determines the Scott topology on the space.
	
	
\end{definition}


The terminology ``rounded'' is justified by the density of $\prec$: every ideal is automatically rounded in the sense that  for any $q\in S$, there exists $q'\in S$ such that $q\prec q'$. Moreover, the fact that $\prec$ is not required to be a strict order allows us to represent subspaces one-sided reals as rounded ideal completions, where $Y$ is some subset of $\mathbb{Q}$ and $\prec$ is the standard order $<$ on $\mathbb{Q}$ except possibly reversed or modified to permit edge cases. This fact was used extensively in \cite{NV}. The most relevant example for us is the following.

\begin{example}\label{ex:ridlreals} Denote $\qint:=\{q\in\mathbb{Q} \,|\,0<q\leq 1 \}$ and define $x\prec y$ iff $x>y$ or $x=y=1$. Then 
	$$\overleftarrow{[0,1]}\cong \RIdl(\mathbb{Q}_{(0,1]},\prec).$$
	
\end{example}

\begin{convention}\label{conv:[0,1]ridl} By Example~\ref{ex:ridlreals}, a point $\gamma\in\overleftarrow{[0,1]}$ may be viewed either as an upper real in the usual sense of Definition~\ref{def:one-sided}, or a rounded ideal $I_\gamma\in\RIdl(\qint,\prec)$ in the sense of Definition~\ref{def:Rstructure}. This paper will use both representations interchangeably, depending on convenience.
	
\end{convention}

The language of rounded ideal completions allows us to reduce many questions about the one-sided reals to questions about the rationals, which are comparatively easier to work with. The following lemmas develop this remark.

\begin{lemma}\label{lem:filteredCOLIMITpts} Let $f\colon X\rightarrow Y$ be a map of (generalised) spaces. Then $f$ preserves filtered colimits of points
	$$f(\colim_{i\in J} x_i)\cong \colim_{i\in J} f(x_i)\,.$$
\end{lemma}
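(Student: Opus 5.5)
The plan is to pass from the point-free language of Definition~\ref{def:ptFREEspace} to the underlying topos theory, where the statement becomes a standard fact about inverse image functors. Write $X=[\thT]$ and $Y=[\thT']$. A point $x_i\in X$ at stage $\calF$ is a geometric morphism $x_i\colon\calF\to\baseS[\thT]$, or equivalently a $\thT$-model $M_i$ in $\calF$. A specialisation morphism $x_i\to x_j$ is a homomorphism of models, i.e.\ a geometric transformation. The map $f$ is a geometric morphism $f\colon\baseS[\thT]\to\baseS[\thT']$, and $f(x)$ is the composite $f\circ x$. Under the universal property $\textbf{Geom}(\calF,\baseS[\thT])\simeq\thT\mathrm{-mod}(\calF)$, this composite corresponds to applying $f^*$ to the generic $\thT'$-model and then pulling back along $x^*$.

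The first step is to show that filtered colimits of $\thT$-models in $\calF$ exist and are computed sortwise, as filtered colimits in $\calF$. Function and relation symbols are interpreted through finite products and subobjects, and in a topos filtered colimits commute with finite limits. Satisfaction of geometric sequents is preserved because geometric formulae are built from finite meets, arbitrary joins and images, all of which commute with filtered colimits. Equivalently, $\thT\mathrm{-mod}(\calF)\simeq\textbf{Geom}(\calF,\baseS[\thT])$, and the colimit corresponds to the pointwise colimit of the inverse image functors $x_i^*$. Such a colimit is left exact because filtered colimits commute with finite limits, and it preserves colimits, so it is again a geometric morphism.

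The second step is to check that $f$ commutes with this colimit. We have $(f\circ x_i)^*=x_i^*\circ f^*$. Since colimits of inverse image functors are computed pointwise, $(\colim_i x_i^*)\circ f^*\cong\colim_i(x_i^*\circ f^*)$ for trivial reasons: precomposition with $f^*$ preserves pointwise colimits. Translating back gives $f(\colim_i x_i)\cong\colim_i f(x_i)$. The main obstacle is the first step, namely confirming that the pointwise colimit of the $x_i^*$ is a genuine geometric morphism and that the equivalence with models respects it, rather than the commutation itself.

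Alternatively, one can stay syntactic. The model $f(x)$ is built from $x$ by a geometric construction, which uses finite limits and arbitrary colimits. Filtered colimits commute with both, so they pass through each stage of the construction.
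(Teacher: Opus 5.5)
Your proof is correct, but it takes a different route from the paper. The paper gives no argument of its own: it simply translates \cite[Theorem 1.37]{Vi07}, which is closest in spirit to the syntactic alternative in your last paragraph (a map is a geometric construction, and such constructions commute with filtered colimits).

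Your main route instead works in the 2-category of toposes. You identify filtered colimits in $\mathbf{Geom}(\calF,\baseS[\thT])$ with pointwise colimits of the inverse image functors $x_i^*$. After that, preservation by $f$ is purely formal, since $(\colim_i x_i^*)\circ f^*$ and $\colim_i(x_i^*\circ f^*)$ agree on the nose. This makes explicit that all the content lies in the \emph{existence} of such colimits of points, which the paper separately takes from \cite[Corollary 7.14]{J0} in the proof of the Lifting Lemma~\ref{lem:liftingsheaves}. It also works uniformly at every stage $\calF$.

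When you write it up, justify the step ``so it is again a geometric morphism''. Left exactness comes from filtered colimits commuting with finite limits in $\calF$, but you also need a right adjoint. That comes from the adjoint functor theorem for Grothendieck toposes. Alternatively, avoiding that theorem and staying closer to the paper's base-independent conventions, take the geometric morphism classifying the sortwise colimit model from your Step~1. Check that its inverse image agrees with $\colim_i x_i^*$ on the objects $\{\vec{x}.\phi\}$ of the syntactic site; it then agrees everywhere, since both functors are cocontinuous.

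What the citation buys in return is brevity, and a formulation directly in terms of geometric constructions. That is the form in which the lemma is used later, where the indexing diagram is itself generic, e.g.\ $\{I_q\}_{q\in I}$ for a generic ideal $I$ in Lemma~\ref{lem:EPIridl}. Your argument, which uses an external filtered $J$, covers that case only after relativising to the base $\baseS[\RIdl(Y,\prec)]$ as in Convention~\ref{conv:fixingx}.
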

\begin{proof} This translates \cite[Theorem 1.37]{Vi07}.
\end{proof}

\begin{lemma}\label{lem:EPIridl} Let $\RIdl(Y,\prec)$ be the rounded ideal completion of $R$-structure $(Y,\prec)$. Then, the map
	\begin{align}\label{eq:CANONridlMAP}
	\psi\colon Y &\longrightarrow \RIdl(Y,\prec) \\ 
	q&\longmapsto  I_q:=\{q'\in Y\,|\, q'\prec q\},\nonumber 
	\end{align}
is an epimorphism of spaces.
\end{lemma}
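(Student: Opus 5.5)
To prove the lemma, I will show that precomposition with $\psi$ is injective on maps out of $\RIdl(Y,\prec)$. That is, given two maps $f,g\colon \RIdl(Y,\prec)\to Z$ into an arbitrary (generalised) space $Z$ with $f\circ\psi = g\circ\psi$, I will show $f\cong g$. Here $Y$ is regarded as a discrete space, whose points are elements of $Y$; it is essentially propositional, since its sort is a set. The strategy is to exhibit every point of $\RIdl(Y,\prec)$ geometrically as a filtered colimit of points in the image of $\psi$. Lemma~\ref{lem:filteredCOLIMITpts} then lets the agreement of $f$ and $g$ on the image of $\psi$ propagate to all points.

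The first step is the colimit presentation. Take a generic point, i.e.\ an ideal $I\in\RIdl(Y,\prec)$, and consider the diagram indexed by $I$ itself, ordered by $\prec$. Since $I$ is an ideal, every finite subset has an upper bound in $I$, so this index category is filtered. Roundedness, which follows from density of $\prec$, gives that it is inhabited. Each $q\in I$ yields the point $\psi(q)=I_q$. Whenever $q\prec q'$, transitivity gives $I_q\subseteq I_{q'}$, which is a specialisation morphism $\psi(q)\sqsubseteq\psi(q')$. I then claim that
$$I=\bigcup_{q\in I} I_q = \colim_{q\in I}\psi(q),$$
since directed unions of ideals compute filtered colimits in the specialisation order. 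The inclusion $\supseteq$ holds because $I$ is downward closed. The inclusion $\subseteq$ uses roundedness: for $q'\in I$ there is $q\in I$ with $q'\prec q$, so $q'\in I_q$.

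The second step is to apply Lemma~\ref{lem:filteredCOLIMITpts}, which gives
$$f(I)\cong \colim_{q\in I} f(\psi(q))\cong\colim_{q\in I} g(\psi(q))\cong g(I).$$
The middle isomorphism uses the equality $f\psi=g\psi$. That equality must be taken as a natural isomorphism compatible with the specialisation maps $\psi(q)\sqsubseteq\psi(q')$; this compatibility follows from functoriality of $f\psi$ and $g\psi$ on specialisations. Finally, because the construction is performed on the generic ideal, we obtain $f\cong g$.

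The main obstacle is making the colimit argument legitimate in the point-free setting. The index set $I$ is not a fixed set; it is a sheaf varying with the generic point, so the diagram is indexed internally. One must check two things. First, the diagram $q\mapsto\psi(q)$ is a geometric construction, namely a map from the étale bundle $\{(I,q): q\in I\}$ over $\RIdl(Y,\prec)$ to $Y$. Second, Lemma~\ref{lem:filteredCOLIMITpts}, i.e.\ \cite[Theorem 1.37]{Vi07}, applies to such internally filtered colimits. I would handle this by working over the base $\RIdl(Y,\prec)$ and using the geometric nature of filteredness ("inhabited, and every pair has an upper bound"). As an alternative, I would argue at the level of frames: the opens $\uparrow q$ of the Scott topology are detected by the points $I_{q'}$, so $\psi^{-1}$ is injective on opens, which again makes $\psi$ epi in $\Loc$.
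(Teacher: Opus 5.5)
Your argument follows the paper's: write the generic ideal as the directed union $I=\bigcup_{q\in I}I_q$, read this as a filtered colimit of the points $\psi(q)$, and apply Lemma~\ref{lem:filteredCOLIMITpts}. The gap is the middle isomorphism $\colim_{q\in I}f\psi(q)\cong\colim_{q\in I}g\psi(q)$. Componentwise isomorphisms $f(I_q)\cong g(I_q)$ induce an isomorphism of colimits only if they commute with the transition maps $f(I_q\sqsubseteq I_{q'})$ and $g(I_q\sqsubseteq I_{q'})$. Your appeal to ``functoriality of $f\psi$ and $g\psi$ on specialisations'' cannot supply this. $Y$ is discrete, so its only specialisations are identities, and $I_q\sqsubseteq I_{q'}$ is not the image under $\psi$ of any morphism of $Y$. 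A 2-cell $f\psi\cong g\psi$ is therefore merely a $Y$-indexed family of isomorphisms, and the transition maps are data of $f,g$ that $f\psi,g\psi$ do not see. The paper's proof takes the same step without comment. The internal-indexing issue you flag is not where the difficulty lies.

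For generalised $Z$ the step is actually false. Take $(Y,\prec)=(\qint,\prec)$, so $\RIdl(Y,\prec)\cong\overleftarrow{[0,1]}$ (Example~\ref{ex:ridlreals}), and take $Z=\OC$. Let $f,g\colon\overleftarrow{[0,1]}\to\OC$ be the lifts, via Lemma~\ref{lem:liftingsheaves}, of the constant family $\{a,b\}$ on $\qint$ with two choices of transitions. For $f$, use identity transitions. For $g$, let $\theta_{q'q}$ send $a,b\mapsto a$ when $q'\geq\frac12>q$, and be the identity otherwise. The cocycle and colimit conditions are immediate. Geometrically, $f=1+1$, while $g$ is the pushout of $1\leftarrow W\to 1$ plus $W$, where $W=\{x<\frac12\}$. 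Then $f\psi\cong g\psi$, since both are the constant two-element family over $Y$. Yet $f\not\cong g$: an isomorphism would be natural in $I_{q'}\sqsubseteq I_q$, forcing $\phi_q=\theta_{q'q}\circ\phi_{q'}$ with $\phi_q,\phi_{q'}$ bijective, which is impossible when $\theta_{q'q}$ is not injective.

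Two things remain true.
\begin{enumerate}
\item[(i)] For \emph{localic} $Z$ the hom-categories are preorders, so the compatibility is automatic and your argument works. Your frame-level alternative is a correct and genuinely different proof that $\psi$ is epi in $\Loc$. Made precise, it says that $\psi^{-1}$ sends the open of $\RIdl(Y,\prec)$ determined by a rounded upset $U\subseteq Y$ to $U$ itself, so $\psi^{-1}$ is injective.
\item[(ii)] For arbitrary $Z$, your colimit argument goes through once the isomorphism $f\psi\cong g\psi$ is assumed natural in the specialisations $I_q\sqsubseteq I_{q'}$, for example when it is the whiskering of a 2-cell $f\Rightarrow g$. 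This is exactly the situation in Step~2 of Lemma~\ref{lem:liftingsheaves}, where the comparison isomorphism is itself induced by the colimits.
\end{enumerate}
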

\begin{proof} It is clear that the canonical map $\psi$ is well-defined since $Y$ is an $R$-structure (and so $I_q$ is an ideal of $Y$ by definition). To show that $\psi$ is an epimorphism, suppose we have two maps $g_1,g_2\colon \RIdl(Y,\prec)\rightarrow Z$ such that $g_1 \psi\cong g_2\psi$. Now every ideal $I\in \RIdl(Y,\prec)$ can be represented as a directed join $I=\bigcup_{q\in I} I_q$, which is a filtered colimit. Hence, apply Lemma~\ref{lem:filteredCOLIMITpts} to compute
	\[g_1(I)=g_1(\displaystyle\bigcup_{q\in I} I_q) \cong \displaystyle\colim_{q\in I}g_1(I_q)\cong \displaystyle\colim_{q\in I}g_1 \psi(q)\qquad \text{and}\qquad g_2(I)\cong \displaystyle\colim_{q\in I}g_2\psi(q). \]
	Since $g_1\circ  \psi \cong  g_2 \circ \psi$ by hypothesis, it follows that $g_1\cong g_2$, showing that $\psi$ is indeed an epimorphism.
\end{proof}

This sets up the following key lemma. 

\begin{lemma}[Lifting Lemma]\label{lem:liftingsheaves} As our setup,
	
	\begin{itemize}
		\item Let $X$ be a space;
		\item Let $(Y,\prec)$ be an $R$-structure.
	\end{itemize}
	Then, the epimorphism from Lemma~\ref{lem:EPIridl}
	induces an equivalence between:
	\begin{enumerate}[label=(\roman*)]
		\item A map $f\colon Y\!\longrightarrow X$ equipped with a system of transition maps $$\left\{\theta_{q'q}\colon f(q')\rightarrow f(q) \right\}_{q'\prec q}\,,
	 $$
	 where each $\theta_{q'q}$ is a specialisation morphism between the corresponding points of $X$.\footnote{{\em Categorical interpretation.} Generalised points of $X$ correspond to geometric morphisms into the topos $\baseS[X]$, so the transition maps here correspond an indexed system of geometric transformations between $\baseS[X]$-valued geometric morphisms.}  Moreover, these transition maps are required to satisfy the following continuity conditions:
		\begin{itemize}
			
			\item \emph{{\bf (Cocycle condition).}} If $q''\prec q'\prec q$ then $$\theta_{q''q}=\theta_{q'q}\circ \theta_{q''q'}\,;$$
			\item \emph{{
				\bf (Colimit condition).}} The induced map $$\theta_{q}\colon\displaystyle\colim_{q'\prec q}f(q')\rightarrow f(q)$$ is an isomorphism. 
		\end{itemize}
		\item A map $\overline{f}\colon\RIdl(Y,\prec)\longrightarrow X$. 
	\end{enumerate}	
\end{lemma}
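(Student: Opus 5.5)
The plan is to build the correspondence in both directions and then check that the two passages are mutually inverse (up to isomorphism). Along the way we use Lemma~\ref{lem:filteredCOLIMITpts} and Lemma~\ref{lem:EPIridl}, together with the fact that every ideal is a directed union of principal ideals, $I=\bigcup_{q\in I}I_q$.

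\emph{From (ii) to (i).} Given $\overline{f}\colon\RIdl(Y,\prec)\to X$, set $f:=\overline{f}\circ\psi$, so $f(q)=\overline{f}(I_q)$. If $q'\prec q$, transitivity gives $I_{q'}\subseteq I_q$, which is a specialisation morphism in $\RIdl(Y,\prec)$. Since maps are functorial on specialisation morphisms, applying $\overline{f}$ yields $\theta_{q'q}\colon f(q')\to f(q)$. The cocycle condition is then just functoriality applied to the composite inclusions $I_{q''}\subseteq I_{q'}\subseteq I_q$.

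For the colimit condition, we first use that $I_q$ is an ideal and that $\prec$ is dense, which together give $I_q=\bigcup_{q'\prec q}I_{q'}$ as a directed union. The indexing set $\{q'\mid q'\prec q\}$ is directed because $I_q$ is an ideal. Lemma~\ref{lem:filteredCOLIMITpts} then gives $\colim_{q'\prec q}f(q')\cong\overline{f}(I_q)=f(q)$, and this isomorphism is induced by the $\theta_{q'q}$.

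\emph{From (i) to (ii).} This is the substantive direction. Given $(f,\theta)$, define for a (generic) ideal $I$
\[
\overline{f}(I):=\colim_{q\in I} f(q),
\]
the filtered colimit in the category of points of $X$ of the diagram $q\mapsto f(q)$ on the directed poset $I$, with transition maps $\theta_{q'q}$. The cocycle condition makes this a genuine functor on the preorder $(I,\prec)$. We then need three facts.
\begin{enumerate}[label=(\alph*)]
\item This is a geometric construction, so that it defines a map of spaces. Filtered colimits of points exist and are geometric, which is essentially the content of \cite[Theorem 1.37]{Vi07}.
\item It is functorial in specialisations $I\subseteq J$, via the induced map on colimits.
\item It restricts correctly along $\psi$: $\overline{f}(I_q)=\colim_{q'\prec q}f(q')\cong f(q)$. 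This is exactly where the colimit condition is used.
\end{enumerate}

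Finally, we check the two passages are inverse. Starting from $\overline{f}$, passing to $(f,\theta)$ and back gives $\colim_{q\in I}\overline{f}(I_q)\cong\overline{f}(I)$, again by Lemma~\ref{lem:filteredCOLIMITpts}. Starting from $(f,\theta)$, the round trip recovers $f$ by (c), and recovers $\theta$ because the colimit coprojections are exactly the $\theta$'s. Uniqueness of $\overline{f}$ extending $f$ is consistent with Lemma~\ref{lem:EPIridl}, which tells us $\psi$ is epi.

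\emph{Main obstacle.} I expect step (a) to be the hardest: justifying that ``take the filtered colimit of the points $f(q)$ indexed by the generic ideal $I$'' is a legitimate geometric construction in $\RIdl(Y,\prec)$. The difficulty is that the indexing diagram is internal and varies with $I$. I would first try to treat $Y$ as a discrete sort and $I$ as a subobject of it, and then invoke the fact that the category of $\thT$-models in a topos has internal filtered colimits preserved by inverse image functors. If that proves awkward, the fallback is to argue at the level of classifying toposes: express $\baseS[\RIdl(Y,\prec)]$ as the classifying topos of flat functors on $(Y,\prec)$, and read off that maps out of it correspond to $\prec$-indexed diagrams of points of $X$ satisfying the colimit condition.
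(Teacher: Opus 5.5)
Your proposal is correct and follows essentially the same route as the paper. Both define $\overline{f}(I):=\colim_{q\in I}f(q)$ using the existence of filtered colimits of points, set $f:=\overline{f}\circ\psi$ and obtain the colimit condition from Lemma~\ref{lem:filteredCOLIMITpts}, and close the round trips via the colimit condition and $I=\bigcup_{q\in I}I_q$. The only cosmetic difference is that the paper finishes the $\overline{f}$-round trip by invoking that $\psi$ is epi (Lemma~\ref{lem:EPIridl}), whereas you inline that lemma's proof. Your explicit use of density to get $I_q=\bigcup_{q'\prec q}I_{q'}$, and your flagging of the geometricity of the generic colimit, are if anything more careful than the paper, which simply cites \cite[Corollary 7.14]{J0} for that step.
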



\begin{proof} We divide the proof into two main steps.
	\subsubsection*{Step 1: Transforming the given map.} Suppose $f\colon Y\!\longrightarrow X$ is a map equipped with transition maps satisfying the continuity conditions of the lemma. For any $I\in\RIdl(Y,\prec)$, the collection \[ \{f(q)\}_{q\in I} \] forms a filtered diagram: filteredness follows from the ideal property of $I$, while the cocycle condition ensures that the transition maps are compatible. Since toposes possess all filtered colimits of their points 
	 \cite[Corollary 7.14]{J0}, we may therefore define
	\begin{align}\label{eq:GET[0,1]}
	\overline{f}\colon\RIdl(Y,\prec)&\longrightarrow X\\
	I&\longmapsto \displaystyle\colim_{q\in I}f(q).\nonumber
	\end{align}
	Conversely, suppose we have a map $\overline{f}\colon \RIdl(Y,\prec)\rightarrow X$. We can then define a map
	\begin{align}\label{eq:GETqint}
	f\colon Y&\longrightarrow X\\
	q&\longmapsto \overline{f}(I_q) \nonumber
	\end{align}
	where $I_q$ as in Equation~\eqref{eq:CANONridlMAP}. By Definition~\ref{def:ptFREEspace}, this assignment is functorial in $q$. Hence, since $q'\prec q$ implies $I_{q'}\sqsubseteq I_{q}$ in $\RIdl(Y,\prec)$, $\overline{f}$ sends this morphism between points to a morphism
	$$
	\theta_{q'q}\colon
	f(q')
	\longrightarrow
	f(q)\,,
	$$
	which defines a system of transition maps satisfying the cocycle condition. Finally, the colimit condition follows by applying Lemma~\ref{lem:filteredCOLIMITpts}, which yields
	\[\colim_{q'\prec q}f(q')=\colim_{q'\prec q}\overline{f}(I_{q'})\cong \overline{f}(I_q)=f(q).\]

	\subsubsection*{Step 2: Proving equivalence} Suppose we are given $f\colon Y\rightarrow X$. Following Step 1, define
	\begin{align*}
	g\colon Y &\longrightarrow X\\
	q&\longmapsto \overline{f}\circ\psi(q)\,,
	\end{align*}
	where $\overline{f}$ is defined as in Equation~\eqref{eq:GET[0,1]}, and $\psi$ is the epimorphism from Lemma~\ref{lem:EPIridl}. Unpacking definitions, \[g(q)=\overline{f}\circ \psi(q)= \overline{f}(I_q)=\displaystyle\colim_{q'\prec q}f(q')\cong f(q),\]
	where the final isomorphism follows from the colimit condition. Thus, $g\cong f$.
	
	Conversely, suppose we are given $\overline{f}\colon\RIdl(Y,\prec)\rightarrow X$. Following Step 1 again, define 
	\begin{align*}
	\overline{g}\colon\RIdl(Y,\prec)&\longrightarrow X\\
	I&\longmapsto \colim_{q\in I}\overline{f}(I_q)\,\,.
	\end{align*}
	For every $q\in Y$, Lemma~\ref{lem:filteredCOLIMITpts} gives
	$$
	\overline g\circ \psi(q)
	=
	\colim_{q'\prec q}\overline f(I_{q'})
	\cong
	\overline f(I_q)
	=
	\overline f\circ \psi(q).
	$$
Since $\psi$ is an epimorphism by Lemma~\ref{lem:EPIridl}, this gives $\overline{f}\cong \overline{g}$, finishing the proof.\end{proof}

\begin{remark} The Lifting Lemma~\ref{lem:liftingsheaves} generalises \cite[Lemma 1.29]{NV}: notice here we no longer require $X$ to be localic. 
\end{remark}

\subsubsection{Localic Spectra}\label{subsec:localicprimes} Let us revisit the discussion of Arakelov compactification from the Introduction.  Classically, the spectrum of a commutative ring
$R$, denoted $\spec(R)$, is the set of prime ideals of $R$, equipped with the Zariski topology. In the point-free setting, however, points and topology are specified simultaneously: a space is presented by a  geometric theory $\thT$, whose models are its points and whose Lindenbaum
algebra is its frame of opens. Thus, to construct a point-free spectrum one
must first choose an appropriate geometric presentation.
\smallskip

There are two natural choices, corresponding to the Zariski and coZariski
topologies \cite{ColeSpectra,JohnstoneSpectra}. 


\begin{example}[The Zariski Spectrum]\label{ex:LSpec} Let $R$ denote a commutative ring with $1$. The \emph{Zariski Spectrum} $\LSpec(R)$ for $R$ is the space of prime filters of $R$.
In $\Set$, a prime filter is equivalently the
complement of a non-trivial prime ideal. Thus the classical points of
$\LSpec(R)$ are the points of $\spec(R)$, with the usual Zariski topology
generated by
\[
D(a)=\{\frap\in\spec(R)\mid a\notin\frap\}.
\]
\end{example}

\begin{example}[The coZariski Spectrum]\label{ex:ISpec} The \emph{coZariski Spectrum} $\ISpec(R)$ for $R$ is the space whose points are the \emph{prime ideals} of $R$. Classically, its underlying set is again
	$\spec(R)$, but now equipped with the coZariski topology generated by the sub-basic opens
	\[
	V(a)=\{\frap\in\spec(R)\mid a\in\frap\}.
	\]

\end{example} 

\begin{remark} As defined, $\LSpec(R)$ and $\ISpec(R)$ are generalised spaces, not necessarily localic. However, this paper will only focus on the special case $R=\Z$. Since $\Z$ is a free algebra construction, this justifies regarding $\LSpec(\Z)$ and $\ISpec(\Z)$ as localic spaces. For more details on the role of free algebra constructions in defining geometric theories, see the discussion after Definition~\ref{def:localic} or \cite[\S 3.4]{Vi07}.
\end{remark}

The upshot is that while $\LSpec(R)$ and $\ISpec(R)$ have the same classical points, their
point-free presentations, and thus their respective topologies, are different. A key insight from \cite{NVOstrowski} is that these topological differences have strong implications for the point-free analysis of absolute values.

\begin{discussion}\label{dis:ISpec} The geometric theory of absolute values on $\Z$ has been axiomatised in \cite[\S 2]{NVOstrowski}, and thus has a corresponding point-free space (cf. Theorem~\ref{thm:structure-thm}). Define $[av_U]$ to be the space of absolute values on $\Q$ satisfying the ultrametric inequality. In light of the Ostrowski's Theorem~\ref{thm:ostrowskiQ}, one expects a quotient map
	\begin{equation}
	\mathrm{quot}\colon [av_{U}]\longrightarrow [\mathrm{places}_{U}]
	\end{equation}
	sending an ultrametric absolute value to its corresponding place. However, if $[\mathrm{places}_U]=\LSpec(\Z)$, then \cite[Observation 5.12]{NVOstrowski} points out that no such map exists. By contrast, the natural map
	\begin{align} 
	\mathcal{I}\colon [av_{U}]&\longrightarrow \ISpec(\Z)\\
	|\cdot| &\longmapsto \{n \,\big|\, |n| <1\} \nonumber 
	\end{align}
	is well-defined, and can be easily checked to send both trivial and non-trivial non-Archimedean absolute values to their corresponding prime ideals. 
	
\end{discussion}

Discussion~\ref{dis:ISpec} thus justifies the following convention.

\begin{convention} Hereafter, the ``space of primes of $\Z$'' means $\ISpec(\Z)$, the coZariski spectrum.
\end{convention}



\subsubsection{Sheaves on Localic Spaces}\label{sec:etale-bundles} Thus far, we have primarily viewed localic spaces as point-free spaces in the sense of Definition~\ref{def:ptFREEspace}. We now turn to their complementary description as categories of sheaves. Since we shall
move freely between these two perspectives, let us fix some notation.

	\begin{convention}\label{conv:TvsST} Let $\thT$ be a geometric theory. We write $\baseS[\thT]$ for its classifying topos, viewed as a category of objects, and $[\thT]$ for its corresponding space of models. 
	Thus
		\[
		F\in\baseS[\thT]
		\]
		denotes an object of the classifying topos (i.e. a sheaf), whereas
		\[
		x\in[\thT]
		\]
		denotes a point (i.e.\ a geometric morphism into $\baseS[\thT]$).
\end{convention}

When we restrict to localic spaces, their sheaves have a particularly nice form: namely, as \'{e}tale bundles.

\begin{definition}[\'{E}tale Bundles]\label{def:etale} Let $f\colon Y\rightarrow X$ be a map of localic spaces. 
\begin{enumerate}[label=(\roman*)]
	\item We call $f$ an {\em open map} if its corresponding frame homomorphism 
	$$f^{-1}\colon\Omega_{X}\rightarrow\Omega_Y\,$$
	has a left adjoint $f_!$ satisfying the Frobenius reciprocity condition:
	\[f_!(U\land f^{-1}(V))=f_!(U)\land V,\qquad\text{for all}\, U\in\Omega_X,V\in\Omega_{Y}.\]
	\item We call $f$ \emph{\'{e}tale} if the maps $f$ and its diagonal $\Delta\colon Y\to Y\times_X Y$ are both open. We call any \'{e}tale map with codomain $X$ an \emph{\'{e}tale bundle on $X$}. 
\end{enumerate}
\end{definition}

\begin{discussion}\label{dis:fibrewiseDISCRETE} Informally, a bundle $p\colon Y\to X$ is just a map but thought of in the opposite direction: for each $x\in X$, we define a fibre space $p^{-1}(x)$ over $x$.\footnote{In point-set topology, one also requires the fibres to vary continuously over the base-point, which must be checked separately. In point-free topology, continuity is automatic so long as one works geometrically. This is because if the generic fibre $p^{-1}(x)$ is constructed geometrically, then it is preserved by pullback and thus extends continuously over all points in $X$; see \cite[\S 9]{VickersPtfreePtwise}.} In the special case $X=\{*\}$, the canonical projection $f\colon Y\to \{*\}$ is \'{e}tale iff $Y$ is a discrete space \cite[Theorem V.5.1]{JoyalTierney}.  This gives rise to a useful slogan: 
$$\text{\'{E}tale bundles} = \text{Fibrewise discrete bundles}.$$
\end{discussion}

One can also characterise sheaves of (generalised) spaces via the so-called {\em object classifier}.

\begin{definition} The {\em theory of objects}, denoted $\mathbb{O}$, has only one sort, with no function symbols, predicates or axioms. Its defining property is that, for any topos $\E$, models of $\mathbb{O}$ in $\E$ are precisely the objects of $\E$. We denote its corresponding space as $[\mathbb{O}]$, and call it the \emph{object classifier}.\footnote{Warning: not to be confused with the \emph{subobject classifier}, which is a single \emph{object} living in each topos $\E$. By contrast, the \emph{object classifier} $[\mathbb{O}]$ is a generalised {\em
	space} parametrising {\em all} objects in {\em all} toposes.} 
\end{definition}

For context and later quotation, the following fact summarises several equivalent characterisations of sheaves on a localic space. 

\begin{fact}\label{fact:sheaves} Let $X$ be a localic space. A sheaf $F\in\baseS X$ admits the following equivalent characterisations:
	\begin{enumerate}[label=(\roman*)]
		\item A functor $F\colon \Omega_X^{\opp}\to \Set$ subject to the standard gluing conditions; 
		\item A  point-free map $F\colon X\to \OC$ from $X$ to the object classifier.
		\item An \'{e}tale bundle $f\colon Y\to X$.
		\item A local homeomorphism $f\colon Y\to X$.
	\end{enumerate}
\end{fact}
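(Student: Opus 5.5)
The plan is to treat characterisation (i) as the reference description of $\baseS X$ and connect each of the others to it, relying on standard results from topos theory rather than new arguments, since the Fact only collects known equivalences. Concretely, I would prove (i)$\Leftrightarrow$(ii), (i)$\Leftrightarrow$(iii), and (iii)$\Leftrightarrow$(iv) in that order.

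\textbf{(i)$\Leftrightarrow$(ii).} This follows from the universal property of classifying toposes recalled after Theorem~\ref{thm:structure-thm}. A point-free map $X\to\OC$ is a geometric morphism $\baseS X\to\baseS[\mathbb{O}]$. These correspond to $\mathbb{O}$-models in $\baseS X$. Since $\mathbb{O}$ has a single sort and no further structure or axioms, such a model is exactly an object of $\baseS X$. For a localic $X$, $\baseS X$ is the topos of sheaves on the frame $\Omega_X$, so such an object is precisely a functor $\Omega_X^{\opp}\to\Set$ satisfying the gluing conditions. The correspondence is natural, because both sides are computed via the generic model.

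\textbf{(i)$\Leftrightarrow$(iii).} This is the locale-theoretic version of the classical sheaf/étale-space equivalence.
\begin{enumerate}[label=(\alph*)]
\item From a sheaf $F$ to a bundle: form the total space $Y$ whose frame is the set of ``compatible families of local sections'', presented by generators $(U,s)$ with $s\in F(U)$. The projection $Y\to X$ sends $U$ to the generator $(U,s)$ summed over all $s$. One then checks that this projection and its diagonal are open, in the sense of Definition~\ref{def:etale}.
\item From an étale $f\colon Y\to X$ back to a sheaf: take the sheaf of local sections $U\mapsto\{s\colon U\to Y\mid fs=\mathrm{incl}\}$.
\end{enumerate}
The fact that these two constructions are mutually inverse is \cite[Theorem V.5.1]{JoyalTierney} (see also \cite[C1.3]{Elephant}), and I would cite it. Discussion~\ref{dis:fibrewiseDISCRETE} gives the geometric intuition: the fibres are discrete and vary continuously.

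\textbf{(iii)$\Leftrightarrow$(iv).} Here I would show that étale maps of locales are exactly local homeomorphisms. A local homeomorphism means that $Y$ is covered by opens $V_i$ such that each $f|_{V_i}$ is an open embedding.
\begin{itemize}
\item Backward direction: open embeddings have open diagonal (the diagonal is an isomorphism), and both openness and open diagonal are local on the domain.
\item Forward direction: openness of $\Delta$ makes the diagonal an open sublocale of $Y\times_XY$. Hence $Y$ is covered by opens $V$ for which $V\times_XV\cap\Delta=V\times_XV$, i.e.\ $f|_V$ is injective in the locale sense. Openness of $f$ then upgrades this to $f|_V$ being an open embedding.
\end{itemize}

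The main obstacle is this last step: carrying out the ``covering by opens on which $f$ is monic'' argument constructively, purely at the level of frames, without points. Rather than reconstructing it, I would follow the argument of \cite[C1.3]{Elephant} / Joyal–Tierney, where exactly this is done. The remaining steps are essentially definitional or direct citations.
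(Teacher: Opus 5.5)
Your proposal is correct and follows the paper's route. Clause (i) is taken as the definition. For (i)$\Leftrightarrow$(ii) you use the universal property of the object classifier; the paper phrases this via $[\mathrm{FinSet},\Set]$ and $\textbf{Geom}(\E,[\mathrm{FinSet},\Set])\simeq\E$, which is the same fact. The remaining equivalences are citations: the paper cites \cite[Proposition VI.3.3]{JoyalTierney} for (i)$\Leftrightarrow$(iii) and \cite[Proposition 1.49]{Vi07} for (iii)$\Leftrightarrow$(iv).

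Two small corrections:
\begin{enumerate}
\item The paper uses \cite[Theorem V.5.1]{JoyalTierney} only for the special case $X=\{*\}$ (a locale is étale over a point iff it is discrete). That theorem gives the general equivalence only after being applied internally in $\baseS X$, so cite Proposition VI.3.3 for (i)$\Leftrightarrow$(iii).
\item In your sheaf-to-bundle sketch, $f^{-1}(U)=\bigvee_{s\in F(U)}(U,s)$ is wrong, because it ignores sections defined only on smaller opens. For $F=y(V)$ with $V\neq\top$ it gives $f^{-1}(\top)=\bot$, so it is not even a frame homomorphism. The correct formula is $f^{-1}(U)=\bigvee\{(W,s)\mid W\le U,\ s\in F(W)\}$. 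Also, $\Omega_Y$ is the frame of subsheaves of $F$, not a set of ``compatible families''.
\end{enumerate}

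The step you flag as the main obstacle in (iii)$\Rightarrow$(iv) is short. Suppose $g=f|_V$ is open and the two projections $V\times_XV\to V$ coincide and are isomorphisms (both inverse to the diagonal). Then Beck--Chevalley for open maps gives $g^{-1}g_!=(\pi_1)_!\pi_2^{-1}=\id$, so $g^{-1}$ is surjective. By Frobenius, $g$ is then the open sublocale of $X$ determined by $g_!(\top)$.
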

\begin{proof} \hfill 
	\begin{itemize}
		\item[(i):] This is the standard definition -- see e.g. \cite[Example A.2.1.8]{Elephant}.
		\item[(i)$\iff$ (ii):] 	Denote $\mathrm{FinSet}$ as (a small skeleton of) the category of finite sets. By  \cite[Example B3.2.9]{Elephant}, the functor category $[\mathrm{FinSet},\Set]$ satisfies the following property
		\[\textbf{Geom}(\E, [\mathrm{FinSet},\Set])\simeq \E\]
		for any topos $\E$. Moreover, $[\mathrm{FinSet},\Set]$ is equivalent to the classifying topos of $\mathbb{O}$ \cite[Example 1.74]{Vi07}. Thus an object of $\baseS X$ is equivalently a point-free map $X\to[\mathbb{O}]$, justifying the name ``object classifier''.\footnote{For those interested in the topos-validity of our analysis: any 2-category $\mathfrak{BTop}/\baseS$ of bounded toposes over $\baseS$ has an object classifier so long as $\baseS$ is an elementary topos with natural number object \cite[Theorem B4.2.11]{Elephant}.}
		\item[(i)$\iff$ (iii):] This is \cite[Proposition VI.3.3]{JoyalTierney}.
			\item[(iii)$\iff$ (iv):] See \cite[Proposition 1.49]{Vi07}; for the relevant definition of local homeomorphism in this context, either read the statement of the cited Proposition or see \cite[C1.5]{Elephant}. 
	\end{itemize}
\end{proof}

Since \'{e}tale bundles are fibrewise discrete (Discussion~\ref{dis:fibrewiseDISCRETE}), one easily checks that Fact~\ref{fact:sheaves} implies: 

\begin{corollary}\label{cor:Set} $\baseS\{\ast\}=\Set$.
\end{corollary}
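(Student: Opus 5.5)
The plan is to compare the characterisations of Fact~\ref{fact:sheaves} for the singleton space $\{\ast\}$, in the form (iii) of \'{e}tale bundles. A sheaf on $\{\ast\}$ is an \'{e}tale map $f\colon Y\to\{\ast\}$. By Discussion~\ref{dis:fibrewiseDISCRETE}, citing \cite[Theorem V.5.1]{JoyalTierney}, this holds exactly when $Y$ is a discrete locale. So the first step identifies $\baseS\{\ast\}$ with the full subcategory of $\Loc/\{\ast\}\simeq\Loc$ on the discrete locales. Morphisms of sheaves become maps of bundles over $\{\ast\}$, which are just locale maps $Y\to Y'$.

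The second step identifies discrete locales with sets of the base $\baseS$. A discrete locale is one of the form $\Omega_Y=\calP(S)$, the powerset frame of a set $S$. Equivalently, it is the space of models of the propositional theory with one proposition $\phi_s$ for each $s\in S$, with axioms
\[
\top\vdash\bigvee_{s\in S}\phi_s,\qquad \phi_s\land\phi_{s'}\vdash\bigvee\{\top\mid s=s'\}.
\]
One then checks that locale maps between discrete locales $S\to S'$ correspond bijectively to functions $S\to S'$. A frame homomorphism $\calP(S')\to\calP(S)$ preserving joins and finite meets is determined by the preimages of singletons. These preimages partition $S$, and the partition is exactly a function. This gives a fully faithful, essentially surjective functor $\Set\to\baseS\{\ast\}$.

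As a cross-check, I would use form (ii) of Fact~\ref{fact:sheaves}. A sheaf on $\{\ast\}$ is a map $\{\ast\}\to\OC$, which is a model of $\mathbb{O}$ in $\baseS$, and that is simply an object of $\baseS=\Set$. Functoriality in specialisation morphisms makes this an equivalence of categories.

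The main obstacle is doing the second step constructively (Convention~\ref{conv:geometricity}). The argument must not use excluded middle to decompose a frame homomorphism $\calP(S')\to\calP(S)$ into a function. The remedy is to argue in the internal logic of $\baseS$: each $s\in S$ lies in exactly one preimage, since the preimages cover $S$ and are pairwise disjoint. Unique choice then gives the function, so the argument goes through without case splits.
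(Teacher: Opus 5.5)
Your proposal is correct and follows the paper's route. The paper combines Fact~\ref{fact:sheaves}(iii) with Discussion~\ref{dis:fibrewiseDISCRETE} (a map $Y\to\{\ast\}$ is \'{e}tale iff $Y$ is discrete, \cite[Theorem V.5.1]{JoyalTierney}) and leaves the identification of discrete locales with sets as ``one easily checks''; your constructive frame-homomorphism argument via unique choice fills in that check. Your cross-check via (ii) relies on identifying models in the topos of $\{\ast\}$ with models in $\baseS$, which is essentially the statement itself, so treat it as a sanity check rather than an independent proof.
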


Finally, we apply the Lifting Lemma~\ref{lem:liftingsheaves} to obtain yet another characterisation of the sheaves on the upper real interval $\overleftarrow{[0,1]}$. This will later play a key role in our analysis of the Archimedean place. 

\begin{observation}\label{obs:CHARof[0,1]SHEAVES} Let $F\in\baseS\overleftarrow{[0,1]}$. 	\underline{Then}, $F$ can be equivalently characterised as:
	\begin{enumerate}[label=(\roman*)]
		\item $F$ is a sheaf on $\overleftarrow{[0,1]}$;
		\item  $F\colon \overleftarrow{[0,1]}\rightarrow[\mathbb{O}]$; 
		\item $F\colon\qint\rightarrow[\mathbb{O}]$ is a map satisfying the continuity conditions of Lemma~\ref{lem:liftingsheaves}. 
	\end{enumerate}
\end{observation}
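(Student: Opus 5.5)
The plan is to chain the equivalences already available, taking $X=\overleftarrow{[0,1]}$ throughout. First, (i)$\iff$(ii) is the special case of Fact~\ref{fact:sheaves}(i)$\iff$(ii) for this $X$. This applies because $\overleftarrow{[0,1]}$ is localic: it is a subspace of the upper reals, presented by a propositional (or essentially propositional) theory of rounded upper subsets of $\Q$. A sheaf $F\in\baseS\overleftarrow{[0,1]}$ is thus the same thing as a point-free map $F\colon\overleftarrow{[0,1]}\to\OC$. The only thing to check here is that Fact~\ref{fact:sheaves} needs nothing about $X$ beyond its being localic.

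Next, for (ii)$\iff$(iii), use Example~\ref{ex:ridlreals} to identify $\overleftarrow{[0,1]}\cong\RIdl(\qint,\prec)$, where $x\prec y$ iff $x>y$ or $x=y=1$. Before invoking anything, verify that $(\qint,\prec)$ is an $R$-structure in the sense of Definition~\ref{def:Rstructure}:
\begin{itemize}
\item $\prec$ is transitive;
\item $\prec$ is dense, since between $y<x$ there is a rational, and $1\prec 1\prec 1$;
\item each $I_q=\{q'\mid q'\prec q\}$ is an ideal: it is the set of rationals in $(q,1]$ for $q<1$, and $I_1=\qint$. It is downward closed and directed; for $q<1$ any finite subset has as upper bound its minimum, since $q'\prec q''$ means $q'>q''$, and for $q=1$ the element $1$ bounds everything in the $\prec$ sense.
\end{itemize}
One should be careful with this last point. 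The reversed order means that \emph{upper bound} refers to $\prec$, not to the numerical order.

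With the $R$-structure in hand, apply the Lifting Lemma~\ref{lem:liftingsheaves} with target space $\OC$. This is permissible because the Lemma explicitly allows a generalised, non-localic codomain, which is exactly the case of the object classifier. It gives an equivalence between maps $\overline F\colon\RIdl(\qint,\prec)\to\OC$ and maps $F\colon\qint\to\OC$ equipped with transition maps $\theta_{q'q}$ satisfying the cocycle and colimit conditions. Composing this with the isomorphism from Example~\ref{ex:ridlreals} yields (ii)$\iff$(iii).

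Concretely, such an $F$ is a $\qint$-indexed family of sets (objects) $F(q)$ with maps $F(q')\to F(q)$ for $q'\prec q$, that is for $q'>q$. These maps must be functorial, and each $F(q)$ must be the colimit of the $F(q')$ over $q'\prec q$. Given that the earlier results are available, the argument is essentially bookkeeping. The main obstacle I expect is confirming that Example~\ref{ex:ridlreals} really holds as a homeomorphism of point-free spaces, i.e.\ that rounded ideals of $(\qint,\prec)$ correspond exactly to upper reals in $[0,1]$, including the edge cases. The ideal $\qint$ itself corresponds to $0$. For $1$, the ideal is $\{1\}$: it is the smallest ideal, since ideals are inhabited and downward closed with $1\prec q$ never holding for $q<1$. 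If this correspondence were in doubt, I would check it directly by sending an upper real $R$ to $R\cap\qint$, adjusted to contain $1$, and verifying roundedness in both directions.
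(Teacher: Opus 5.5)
Your proposal follows the paper's own proof. (i)$\iff$(ii) is Fact~\ref{fact:sheaves} applied to the localic space $\overleftarrow{[0,1]}$. (ii)$\iff$(iii) combines the isomorphism $\overleftarrow{[0,1]}\cong\RIdl(\qint,\prec)$ of Example~\ref{ex:ridlreals} with the Lifting Lemma~\ref{lem:liftingsheaves} for codomain $\OC$. That codomain is allowed because the Lemma does not require a localic target.

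The paper takes Example~\ref{ex:ridlreals} as given, so your check that $(\qint,\prec)$ is an $R$-structure is optional. Its conclusion is true, but as written it reverses the orientation of $\prec$ in several places. These are local slips, not a gap in the main argument, but they should be fixed:
\begin{itemize}
\item $I_1=\{1\}$, not $\qint$. Indeed $q'\prec 1$ means $q'>1$ or $q'=1$, and $q'>1$ is impossible in $\qint$. This agrees with the paper's $I_\gamma=\{q\mid\gamma<q<1\}\cup\{1\}$ at $\gamma=1$. $I_1$ is an ideal because only $1$ lies $\prec$-below $1$, and $1\prec 1$ supplies the required upper bounds.
\item For $q<1$, the minimum $m$ of a finite $S\subseteq I_q$ is not a $\prec$-upper bound of $S$, because $m\prec m$ fails when $m<1$. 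Use density instead: any rational $u$ with $q<u<m$ lies in $I_q$ and satisfies $s\prec u$ for all $s\in S$. Strict bounds are essential here. With non-strict bounds, the non-rounded set $\{q\in\qint\mid q\geq\frac12\}$ would also count as an ideal, and Example~\ref{ex:ridlreals} would fail.
\item $1$ is the $\prec$-\emph{least} element, since $1\prec q$ for every $q\in\qint$. Hence it is not an upper bound for anything except $\{1\}$; for instance $\frac12\prec 1$ is false, contradicting your claim that $1$ ``bounds everything''. This is also why $\{1\}$ is the smallest ideal: downward closure puts $1$ into every inhabited ideal. So your stated reason ``$1\prec q$ never holding for $q<1$'' has the direction backwards. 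What never holds for $q<1$ is $q\prec 1$, and that is what makes $\{1\}$ downward closed.
\end{itemize}
With these corrections the bookkeeping goes through, and your argument coincides with the paper's.
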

\begin{proof} The equivalence of (i) and (ii) is Fact~\ref{fact:sheaves}. For (ii)$\iff$(iii), use the isomorphism $\overleftarrow{[0,1]}\cong\RIdl(\qint,\prec)$ from Example~\ref{ex:ridlreals}, together with the Lifting Lemma~\ref{lem:liftingsheaves}.  
\end{proof}

\subsection{Descent in Topos Theory}\label{sec:Des}  By way of motivation, 
consider a discrete set $X$ and a discrete group $G$ acting upon it. Classically, the quotient is obtained by identifying the set of points lying in the same orbit:
\[X/G := \{\mathrm{Orb}(x)\,|\, x\in X\}.\]
For point-free spaces, a direct translation is complicated by the fact that generalised points live in different toposes. One could attempt to recover the above picture by restricting to just the classical points (i.e. the $\Set$-based models), but these are in general insufficient to capture the geometry of the entire space -- and in fact, may not exist at all. 

Descent techniques supply the appropriate analogue: from a suitable diagram of spaces, they produce a new space realising the desired quotient together with its induced geometric structure. From the  topos-theoretic perspective, this corresponds to constructing certain finite colimits in the 2-category of toposes $\mathfrak{Top}$; see \cite[B3.4]{Elephant}. This section recalls the standard and lax descent constructions, together with Moerdijk's Stability Theorem~\ref{thm:MoerdijkSTABILITY}. Much of the material is standard \cite{JoyalTierney,Moerdijk}, except that we recast the constructions in the language of point-free topology.

\begin{convention} Caligraphic letters $\E,\E'$ \dots denote generalised spaces, and $\baseS\E,\baseS\E'$ \dots denote their corresponding categories of sheaves (cf. Convention~\ref{conv:TvsST}). A point-free map of spaces
 $$f\colon \E\to\E'$$ corresponds to, under the point-free space/topos correspondence, to a geometric morphism whose inverse image functor we denote by
	$$f^{\ast}\colon \baseS\E'\longrightarrow\baseS\E\,.$$ 
Note the reversal of arrows, mirroring the frame homomorphisms in Section~\ref{subsec:prop-th-frame}.\footnote{See \cite{VickersPtfreePtwise} for details, in particular the discussion on why classifying toposes may be regarded as generalised frames (what Vickers calls ``Giraud frames'').}	When restricting to localic spaces, we use standard capital letters $X,Y,\ldots$ and $\baseS X,\baseS Y$ for their corresponding categories of sheaves. We write $\Loc$ for the category of localic spaces.
\end{convention}

\subsubsection{Standard Descent} A \textit{2-truncated simplicial space} $\E_{\bullet}$ is a diagram of (generalised) spaces of the form
\begin{equation}\label{eq:2truncSIMPspace}
\begin{tikzcd}\mathcal{E}_2 \ar[r, bend right, shift right=1ex, swap, "\widehat{d}_2"] \ar[r,swap,"\widehat{d}_1"] \ar[r,bend left, shift left=1ex,  "\widehat{d}_0"]& \mathcal{E}_1 \ar[r, bend right,shift right=1ex, swap, "d_1"] \ar[r, bend left, shift left=1ex, "d_0"] & \mathcal{E}_0 \ar[l, "s_0"] \end{tikzcd}
\end{equation} 
We require that the maps in Equation~\eqref{eq:2truncSIMPspace} commute up to isomorphism.\footnote{For context on why things only commute up to isomorphism, see Discussion~\ref{dis:descentAUTOMORPHISMS}.}


\begin{construction}[Standard Descent]\label{cons:stdDESCENT} Given any 2-truncated simplicial space $\E_{\bullet}$, passing to the corresponding category of sheaves yields the diagram 
	\begin{equation}\label{eq:stdDESCENTsimpTOPOS}
	\begin{tikzcd}\baseS\E_2 & \baseS\E_1 \ar[l, bend right, shift right=1ex, swap, "\widehat{d}^*_0"] \ar[l,"\widehat{d}^*_1"] \ar[l,bend left, shift left=1ex,  "\widehat{d}^*_2"] \ar[r, swap,"s^*_0"] & \ar[l, bend right,shift right=1ex, swap, "d^*_0"] \ar[l, bend left, shift left=1ex, "d^*_1"] \baseS\E_0 \end{tikzcd}\,\,\,\,.
	\end{equation}
The {\em universal descent cocone} of Diagram~\eqref{eq:stdDESCENTsimpTOPOS} is a pair $(\Des,p^*)$, whereby:
	\begin{enumerate}
	\item $\Des$ is the {\em descent category}, defined by
	\begin{itemize}
		\item[] \textbf{Objects:} $(F,\theta)$, where $F$ is an object of $\baseS\mathcal{E}_0$
		and $\theta:d_0^*(F)\xrightarrow{\sim} d^*_1(F)$, also known as the \textit{descent data}, is an isomorphism satisfying the identities
		\begin{enumerate}[label=(\roman*)]
			\item \emph{(Unit Condition).}  $s^*_0(\theta)\cong \id$;
			\item \emph{(Cocycle Condition).}  $\widehat{d}^*_0(\theta)\circ \widehat{d}^*_2(\theta)\cong \widehat{d}_1^*(\theta)$.
		\end{enumerate}
	\smallskip
	
		\item[] \textbf{Morphisms:} $\alpha\colon (F,\theta)\rightarrow (F',\xi)$, where  $u\colon F\rightarrow F'$ is a morphism in $\baseS\mathcal{E}_0$ that is compatible with the descent data, i.e. $d_1^*(u)\circ\theta=\xi\circ d_0^*(u)$.
	\end{itemize}
	\item  $p^*$ is the forgetful functor
	\begin{align*}
	p^*\colon \Des&\longrightarrow \baseS\E_0\\
	(F,\theta)&\longmapsto F
	\end{align*}
\end{enumerate}
\end{construction} 

\begin{remark}\label{rem:univ-descent} The claim that $(\Des,p^*)$ defines the universal descent cocone of Diagram~\eqref{eq:stdDESCENTsimpTOPOS} follows from the standard theory of descent. Although $\Des$ as defined above is just a category, it is in fact a topos; see \cite[\S 3]{Moerdijk}. One then checks that $p^*$ is the inverse image functor of a geometric morphism $p\colon \calS\E_0\to \Des$, and that $(\Des,p)$ defines the  pseudo-colimit of Diagram~\eqref{eq:stdDESCENTsimpTOPOS} in $\mathfrak{Top}$ (the 2-category of toposes).\footnote{{\em Aside.} One should take care to distinguish here between the inverse image functors, which reverse the direction of the corresponding maps, and the geometric morphisms themselves. The latter are the morphisms in $\mathfrak{Top}$ with respect to which the colimit is formed, and hence point in the same direction as the corresponding point-free maps.} 
\end{remark}

\begin{remark} By Remark~\ref{rem:univ-descent}, $\Des$ defines a topos; it therefore classifies a geometric theory. Hence, translating Construction~\ref{cons:stdDESCENT} back into the language of point-free topology, we obtain a cocone 
	\begin{equation}
	p\colon \mathcal{E}_{\bullet}\longrightarrow [\thT_{\Des}],
	\end{equation}
	where $[\thT_\Des]$ denotes the point-free space corresponding to the descent topos $\Des$. Informally, $[\thT_{\Des}]$ may be viewed as the coequaliser of $d_0$ and $d_1$, subject to the additional descent conditions encoded by the simplicial structure.
	
\end{remark}

Two main examples of Construction~\ref{cons:stdDESCENT} will be important for this paper. The first revisits our motivating example of quotienting a space by a group action.

\begin{example}[Descent Topos of a Groupoid]\label{ex:groupoidDESCENT} Let $G:=(G_0,G_1)$ be a groupoid in $\Loc$, with domain, codomain, unit and multiplication maps
	\begin{equation}\label{eq:groupoidEXAMPLE}
	\begin{tikzcd} G_1\times_{G_0}G_1 \ar[r, bend right, shift right=1ex, swap, "\pi_0"] \ar[r,swap,"\m"] \ar[r,bend left, shift left=1ex,  "\pi_1"]& G_1 \ar[r, bend right,shift right=1ex, swap, "d_1"] \ar[r, bend left, shift left=1ex, "d_0"] & G_0 \ar[l, "s"]  
	\end{tikzcd}
	\end{equation}
Here the pullback is taken along $d_0$ and $d_1$; such pullbacks always exist \cite[Theorem B4.1.1]{Elephant}. The groupoid axioms ensure that $G$ determines a 2-truncated simplicial space, and hence Construction~\ref{cons:stdDESCENT} produces a descent topos, which we denote $\BG$. 

\end{example}

\begin{remark} Notice the colimit here is taken in the 2-category of toposes $\mathfrak{Top}$, and {\em not} in $\Loc$ (or in the corresponding category of localic toposes). In particular, although $G$ is a diagram of localic spaces, the descent topos itself may not be localic; see Theorem~\ref{thm:BG}.
\end{remark}

\begin{discussion} The point-free space $[\thT_{\BG}]$ corresponding to $\BG$ is equipped with a canonical cocone from the simplicial Diagram~\eqref{eq:groupoidEXAMPLE}, and in particular coequalises the maps $d_0$ and $d_1$. In the language of (generalised) points, let $g$ be a point of $G_1$ and write
	$$x:= d_0(g)\qquad gx:=d_1(g)\,.$$
In this sense, $g$ may be regarded as an arrow from $x$ to $gx$. The multiplication map
$$\m\colon G_1\times_{G_0}G_1\longrightarrow G_1$$
encodes composition of such arrows, while the unit and inverse maps give the identity and inverse arrows. Consequently, the coequaliser $[\thT_\BG]$ identifies the source and target of every arrow: $p(x)\cong p(gx)$. In this sense, $[\thT_\BG]$ may be viewed as the universal space in which the $G_1$-action on $G_0$ is identified, and thus as the quotient space of $G_0$ by the $G_1$-action.
\end{discussion}

The second example demonstrates how descent isolates important structure-preserving properties of point-free maps between spaces. 

\begin{example}[\v{C}ech groupoid]\label{ex:DescentGEOMMORPHISM} Let $\Phi\colon \E'\rightarrow \E$ be a map of (generalised) spaces. Its {\em \v{C}ech groupoid} is the diagram below constructed via iterated pullbacks
	\begin{equation}\label{eq:PHIiteratedPULLBACKS} \begin{tikzcd} \E'\times_{\E} \E'\times_{\E} \E' \ar[r, bend right, shift right=1ex, swap,"\pi_{12}"] \ar[r,"\pi_{02}"] \ar[r,bend left, shift left=1ex,  "\pi_{01}"]& \E'\times_\E \E' \ar[r, bend right,shift right=1ex, swap, "\pi_1"] \ar[r, bend left, shift left=1ex, "\pi_0"] &\E' \ar[l, swap, "\Delta"] \ar[r,"\Phi"] & \E
	\end{tikzcd}\,\,.
	\end{equation}
Here, the diagonal map $\Delta\colon \E'\rightarrow \E'\times_\E \E'$ sends $x\mapsto (x,x)$; the remaining arrows are the evident projection maps, e.g. $\pi_{01}(x_0,x_1,x_2)=(x_0,x_1).$ Diagram~\eqref{eq:PHIiteratedPULLBACKS} is clearly a $2$-truncated simplicial space -- in fact, a groupoid, with the inverse map given by interchanging the two projections. Applying Construction~\ref{cons:stdDESCENT} therefore gives a descent category, which we denote by $\Des(\Phi)$.
\end{example}

\begin{definition}[Effective Descent]\label{def:eff-descent} Continuing with Example~\ref{ex:DescentGEOMMORPHISM}, the inverse image functor
	\begin{equation}
	\Phi^*\colon \baseS\E\rightarrow \baseS\E'
	\end{equation}
induces a comparison functor $\chi\colon \baseS\E\rightarrow \Des(\Phi)$ such that the diagram below commutes
	\begin{equation}
	\begin{tikzcd}
	\baseS\E \ar[rr,"\chi"] \ar[dr,swap,"\Phi^*"] && \Des(\Phi) \ar[dl,"U"]\\
	& \baseS\E'
	\end{tikzcd}\,,
	\end{equation}
where $U$ is the forgetful functor $U(F,\theta)=F$. 
 We say that $\Phi$ satisfies \emph{effective descent}  when the induced functor $\chi$ is an equivalence.
\end{definition}

The usefulness of Definition~\ref{def:eff-descent} lies in the fact that the abstract descent topos can be hard to identify in general. When $\Phi$ satisfies effective descent, the descent topos is equivalent to a topos whose structure we already understand, thereby streamlining the analysis. This motivates a fundamental theorem of Joyal--Tierney, which identifies a broad class of morphisms for which effective descent is guaranteed.
\begin{theorem}[{{\cite[Theorem VIII.2.1]{JoyalTierney}}}]\label{thm:JTopenSURJ} Open surjections of toposes are effective descent morphisms. 
\end{theorem}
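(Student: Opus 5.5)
The plan is to prove this by the standard monadicity argument, splitting effective descent into a \emph{comonadicity} statement and a \emph{Beck--Chevalley} statement. Let $\Phi\colon\E'\to\E$ be an open surjection, with inverse image $\Phi^*\colon\baseS\E\to\baseS\E'$ and direct image $\Phi_*$ right adjoint to it. The adjunction $\Phi^*\dashv\Phi_*$ determines a comonad $\mathbb{G}:=\Phi^*\Phi_*$ on $\baseS\E'$. By construction, the comparison functor $\chi$ of Definition~\ref{def:eff-descent} factors through the category of $\mathbb{G}$-coalgebras. The two steps are therefore:
\begin{enumerate}[label=(\alph*)]
\item show that $\Phi^*$ is comonadic, i.e.\ that $\baseS\E$ is equivalent to the category of $\mathbb{G}$-coalgebras;
\item show that $\mathbb{G}$-coalgebras are the same thing as objects of $\Des(\Phi)$.
\end{enumerate}

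For step (a), I would use the dual of Beck's monadicity theorem. This requires that $\Phi^*$ has a right adjoint, reflects isomorphisms, and preserves equalisers of $\Phi^*$-split coreflexive pairs. The right adjoint is $\Phi_*$. Since $\Phi^*$ is the inverse image of a geometric morphism, it is left exact, so it preserves all equalisers and the third condition is automatic. For reflection of isomorphisms, I would use that surjectivity of $\Phi$ means precisely that $\Phi^*$ is faithful. A faithful functor between toposes that preserves monos and epis (as any inverse image functor does) reflects monos and epis, and every topos is balanced, so $\Phi^*$ is conservative. Note that this step uses only surjectivity, not openness.

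For step (b), openness enters. The descent data in Construction~\ref{cons:stdDESCENT}, applied to the \v{C}ech groupoid of Example~\ref{ex:DescentGEOMMORPHISM}, is an isomorphism $\theta\colon\pi_0^*F\xrightarrow{\sim}\pi_1^*F$ on $\E'\times_\E\E'$. To compare this with a coalgebra structure $F\to\Phi^*\Phi_*F$, I need the Beck--Chevalley isomorphism
\[
\Phi^*\Phi_*\;\cong\;\pi_{1*}\pi_0^*
\]
for the (pseudo-)pullback square of $\Phi$ along itself. Given this isomorphism, I would proceed by adjunction:
\begin{itemize}
\item the map $\theta\colon\pi_0^*F\to\pi_1^*F$ corresponds to a map $F\to\pi_{1*}\pi_0^*F\cong\Phi^*\Phi_*F$;
\item the unit condition $s_0^*(\theta)\cong\id$ corresponds to the counit axiom of a coalgebra;
\item the cocycle condition on the triple pullback corresponds to the coassociativity axiom, after applying Beck--Chevalley once more to $\E'\times_\E\E'\times_\E\E'$;
\item invertibility of $\theta$ comes for free, since $\E'\times_\E\E'$ is a groupoid: the inverse of $\theta$ is obtained by pulling $\theta$ back along the swap map.
\end{itemize}
Morphisms of descent data match coalgebra morphisms by naturality of the same correspondence.

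The main obstacle is the Beck--Chevalley condition in step (b). It fails for arbitrary geometric morphisms, and it is exactly where openness is needed. My first approach would be to reduce to the localic case. For localic $\Phi$, I would work internally in $\baseS\E$, so that $\Phi$ becomes an open surjection of internal locales; Beck--Chevalley for pullbacks along open maps then follows from Frobenius reciprocity (Definition~\ref{def:etale}(i)) applied to $f_!$ on frames, extended from opens to étale bundles. For a general open surjection I would then use the Joyal--Tierney covering theorem: any topos admits an open surjection from a localic topos. Combining this with the stability of open surjections under pullback, and with the fact that a map $\Phi$ is effective descent whenever some composite $\Phi\circ\Psi$ with $\Psi$ effective descent is, the general case reduces to the localic one. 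If the reduction proves awkward, I would instead cite Beck--Chevalley for open geometric morphisms directly from \cite{Elephant} (Part~C3) and let steps (a) and (b) carry the proof.
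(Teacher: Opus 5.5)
Your step (a) is fine, since comonadicity of $\Phi^*$ needs only surjectivity. The gap is in step (b): the sheaf-level Beck--Chevalley isomorphism $\Phi^*\Phi_*\cong\pi_{1*}\pi_0^*$ is \emph{false} for open surjections in general. Frobenius reciprocity gives Beck--Chevalley only at the level of frames, i.e.\ for opens (subterminal sheaves). It does not extend to arbitrary \'{e}tale bundles, because the sheaf direct image is a sections functor, and taking sections does not commute with this pullback when $\Phi$ is merely open. It does commute when $\Phi$ is locally connected or proper, which is why your argument works in those cases.

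Here is a counterexample over $\Set$. Take $Y=\mathbb{N}\times 2^{\mathbb{N}}$ and $\Phi\colon Y\to\{*\}$, which is (classically) an open surjection. Since $Y$ is locally compact, the localic product $Y\times Y$ is the spatial one. Let $F$ be the constant sheaf $2$.
\begin{itemize}
\item Every stalk of $\Phi^*\Phi_*F$ is the set $\mathrm{LC}(Y,2)$ of locally constant maps.
\item By contrast, $(\pi_{1*}\pi_0^*F)(V)=\mathrm{LC}(Y\times V,2)$, and the comparison sends a germ $s$ to the germ of $s\circ\pi_0$.
\item Fix $c\in 2^{\mathbb{N}}$, let $W_n$ be the clopen set of sequences agreeing with $c$ in the first $n$ places, and put $h((n,x),(k,t)):=1_{W_n}(t)$.
\item Then $h\in\mathrm{LC}(Y\times Y,2)$, but for no neighbourhood $V$ of $(m,c)$ does $h|_{Y\times V}$ factor through $\pi_0$. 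This is because $1_{W_n}$ is non-constant on every neighbourhood of $c$ once $n$ is large.
\end{itemize}
So the comparison is not surjective on stalks. Your argument therefore gives no control over the functor from $\Phi^*\Phi_*$-coalgebras to $\Des(\Phi)$. Your reduction to the localic case cannot help either, since this counterexample is already localic.

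The paper only cites Joyal--Tierney, but their proof shows how to repair your strategy: run the comonadicity-plus-Beck--Chevalley argument one level up, for modules over frames (internal sup-lattices) instead of sheaves.
\begin{itemize}
\item For a locale map $f\colon X\to Y$ the module-level inverse image is $\Omega_X\otimes_{\Omega_Y}(-)$. Beck--Chevalley is automatic there, because pushouts of frames are tensor products: $\Omega_{X\times_Y X}\cong\Omega_X\otimes_{\Omega_Y}\Omega_X$.
\item Openness and surjectivity are what give comonadicity. Frobenius reciprocity says precisely that $f_!$ is $\Omega_Y$-linear, and injectivity of $f^{-1}$ gives $f_!f^{-1}=\mathrm{id}$. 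So $f^{-1}\colon\Omega_Y\to\Omega_X$ is a split monomorphism of $\Omega_Y$-modules, which yields effective descent for modules.
\item Since frames are commutative algebras in sup-lattices, this gives descent for locales over $Y$.
\item One finishes by checking that being \'{e}tale (open with open diagonal) descends along open surjections, so descent for locales restricts to descent for sheaves.
\end{itemize}
The roles are thus the reverse of your plan. For sheaves, comonadicity is free and Beck--Chevalley is what breaks; for modules, Beck--Chevalley is free and openness is needed for comonadicity.
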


\begin{remark}\label{rem:openSURJ} For later reference, let us translate Theorem~\ref{thm:JTopenSURJ} to the localic setting. A localic map $f\colon Y\to X$ is an open surjection precisely when the corresponding frame homomorphism 
$$f^{-1}\colon \Omega_X \to \Omega_Y$$
is injective and admits a left adjoint satisfying the Frobenius reciprocity condition of Definition~\ref{def:etale}. By \cite[Prop.~C1.5.1 and Theorem~C1.5.4]{Elephant}, this frame-theoretic characterisation is equivalent to the associated geometric morphism being an open surjection.
\end{remark}

Remarkably, the reach of these ideas goes considerably further. An important discovery of Joyal--Tierney shows that descent provides a presentation of arbitrary toposes in terms of localic groupoids. Recall that an open localic groupoid $G$ is one whose $d_0,d_1$ maps are open (in the sense of Definition~\ref{def:etale}). 

\begin{theorem}\label{thm:BG} For any topos $\calE$, there exists an open localic groupoid $G$ such that 
	$$\E\simeq \BG\,.$$
\end{theorem}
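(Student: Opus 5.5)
The plan follows Joyal--Tierney's strategy: first present $\E$ as a quotient of a localic space by an open surjection, then recover $\E$ by descent along the \v{C}ech groupoid of that surjection.

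\emph{Step 1 (localic cover).} By Theorem~\ref{thm:structure-thm}, write $\E\simeq\baseS[\thT]$ for some geometric theory $\thT$. I would invoke the Joyal--Tierney covering theorem (cf. \cite[\S VII.3]{JoyalTierney}, \cite[C5.2]{Elephant}). It gives a localic space $G_0$ and an open surjection $\Phi\colon G_0\to\E$. Concretely, $G_0$ classifies $\thT$-models whose carriers are equipped with partial surjections from $\N$; equivalently, models together with an enumeration of each sort. Two points must be checked:
\begin{itemize}
\item $G_0$ is localic, since enumerations eliminate the sorts and make the theory essentially propositional (Definition~\ref{def:localic});
\item $\Phi$ is open surjective, since every model is locally enumerable, with openness coming from a site-level calculation.
\end{itemize}
I expect this step to be the main obstacle. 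Openness of the forgetful map is the delicate computation in Joyal--Tierney; I would simply cite it rather than reprove it.

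\emph{Step 2 (groupoid).} Form the \v{C}ech groupoid of $\Phi$ as in Example~\ref{ex:DescentGEOMMORPHISM}. Set $G_1:=G_0\times_\E G_0$, with $d_0,d_1$ the two projections $\pi_0,\pi_1$. The structure maps and the groupoid axioms come directly from the iterated pullbacks in that example.

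Two things remain to verify. First, $G_1$ is localic: maps into a topos that are localic are stable under pullback and composition, and $\Phi$ is localic because $G_0$ is. Second, $d_0$ and $d_1$ are open, since open maps are pullback-stable. Hence $G$ is an open localic groupoid.

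\emph{Step 3 (conclusion).} By Theorem~\ref{thm:JTopenSURJ}, $\Phi$ is an effective descent morphism. So the comparison functor $\chi\colon\baseS\E\to\Des(\Phi)$ of Definition~\ref{def:eff-descent} is an equivalence. By construction $\Des(\Phi)$ is exactly the descent topos $\BG$ of Example~\ref{ex:groupoidDESCENT} for this groupoid. Therefore $\E\simeq\BG$.
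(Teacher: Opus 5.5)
Your proposal is correct, but it is organised differently from the paper's proof. The paper cites Joyal--Tierney's Theorem VIII.3.2 as a finished result, phrased in terms of \'{e}tale $G$-spaces. It then needs Moerdijk's separate identification of \'{e}tale $G$-spaces with descent data to match its own definition of $\BG$ as a descent category (Example~\ref{ex:groupoidDESCENT}).

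You instead rebuild VIII.3.2 from its two ingredients, the localic covering theorem and Theorem~\ref{thm:JTopenSURJ}, and take $G$ to be the \v{C}ech groupoid of the cover $\Phi$. Since the paper defines $\BG$ as a descent category, $\Des(\Phi)$ is then literally $\BG$. So you never pass through \'{e}tale $G$-spaces, and Moerdijk's equivalence is not needed. Your route also gives more: $G_0\to\E$ and $d_0,d_1$ are open \emph{surjections}, not merely open maps. It is also the same \v{C}ech-groupoid technique the paper later uses in Theorem~\ref{thm:TRIVIALfreeTRANSaction}.

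The cost is bookkeeping that the paper's citation hides. You handle the cancellation and pullback-stability of localic and open morphisms. One point you gloss over: the \v{C}ech diagram commutes only up to isomorphism in $\mathfrak{Top}$. However, $G_0$, $G_1$ and $G_0\times_{\E}G_0\times_{\E}G_0$ are localic, and geometric morphisms into a localic topos form a preorder. Those isomorphisms are therefore equalities, so $G$ is a genuine groupoid in $\Loc$.

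Finally, your ``enumeration'' description of $G_0$ is only heuristic. The fact actually needed is that the internal locale of partial surjections $\N\rightharpoonup A$ is open and positive for every object $A$, even when it has no points. Since you cite the covering theorem rather than reprove it, this does not affect correctness.
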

\begin{proof} Joyal-Tierney prove in \cite[Theorem VIII.3.2]{JoyalTierney} that every topos equivalent to the category of \'{e}tale $G$-spaces for some open localic groupoid $G$. On the other hand, Moerdijk \cite[\S 4.2 and \S 5.2]{Moerdijk} identifies the category of \'{e}tale $G$-spaces with the descent category $\BG$.
\end{proof}

\begin{discussion}[Descent and Automorphisms]\label{dis:descentAUTOMORPHISMS} Let us pause to appreciate Theorem~\ref{thm:BG}. It tells us \emph{all} toposes arise as the descent category of some open localic groupoid $G$. As remarked by Johnstone \cite[C5.1]{Elephant}, this resonates with an informal picture, dating back to Grothendieck's work on \'{e}tale cohomology of schemes, that a topos is ``a space whose points have enough internal structure to allow them to possess non-trivial automorphisms''.

The groupoid presentation makes this intuition precise at a structural level: $G_0$ records the underlying space, while $G_1$ records additional isomorphisms between its points. In the special case where the groupoid is the trivial groupoid on a locale $X$, there are no such isomorphisms, and $BG\simeq\baseS X$ is localic.\footnote{To see why, compare the characterisation of $\BG$ as the category of \'{e}tale $G$-spaces \cite[\S 5.2]{Moerdijk} with the characterisation of sheaves on a localic space as \'{e}tale bundles (cf. Fact~\ref{fact:sheaves}).} By contrast, if we consider a connected atomic topos $\baseS\E$ with a global point $q\colon \Set\to \calS\E$, then one can show that $\baseS\E\simeq \BG$, where $G$ is the localic group of automorphisms of the point $q$ \cite[Remark C5.2.14(c)]{Elephant}.
\end{discussion}

\subsubsection{Working Internally and Base-Change} The preceding constructions are compatible with base-change. Let $\Phi\colon X'\to X$ be a localic map, and write $\mathbf{Loc}(X)$ for the slice category of localic spaces over $X$. Pullback along $\Phi$ gives a functor \footnote{Establishing the existence of such pullbacks requires work: it cannot in general be obtained simply by applying the inverse image functor $\Phi^*$ to an internal locale $L$. Rather, one has to construct $\Phi^{\#}(L)$ explicitly, e.g. via the technology of frame presentations \cite[\S 1.6]{Moerdijk} or GRD-systems \cite[\S 5]{ViPowerlocaleEXP}.}  
$$\Phi^{\#}\colon\mathbf{Loc}(X)\longrightarrow\mathbf{Loc}(X').$$
Thus, if $G$ is an open localic groupoid internal to $\mathbf{Loc}(X)$, then pulling back its structure maps along $\Phi$ gives an open localic groupoid
 $\Phi^\#(G)$ internal to $\mathbf{Loc}(X')$. The descent construction of Example~\ref{ex:groupoidDESCENT} applies equally well in this internal setting; we write $B(\baseS X,G)$ for the resulting descent topos. Moerdijk's stability theorem then says that this construction commutes with base-change.

\begin{theorem}[{{Moerdijk's Stability Theorem \cite[Theorem 6.7]{Moerdijk}}}]\label{thm:MoerdijkSTABILITY} Let $\Phi\colon X'\to X$ be a map of localic spaces, and let $G$ be an open localic groupoid internal to $\mathbf{Loc}(X)$. \underline{Then}, there is a canonical equivalence
	\[B(\baseS X',\Phi^{\#}(G))\xrightarrow{\sim} \baseS X'\times_{\baseS X} B(\baseS X,G)\,\,.\]
\end{theorem}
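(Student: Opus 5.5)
The plan is to deduce the equivalence from the Joyal--Tierney descent Theorem~\ref{thm:JTopenSURJ}, using two facts: open surjections are stable under pullback, and pullbacks of localic toposes are computed by pullbacks of locales. Write $p\colon \baseS_X G_0 \to B(\baseS X,G)$ for the canonical cocone map from the universal descent cocone (Construction~\ref{cons:stdDESCENT}, Example~\ref{ex:groupoidDESCENT}). Here $\baseS_X G_0$ denotes sheaves on the internal locale $G_0$, which is the localic topos $\baseS X$-fibred over $X$. Let
\[P:=\baseS X'\times_{\baseS X} B(\baseS X,G)\]
be the pullback topos, and let $p'\colon \baseS X'\times_{\baseS X}\baseS_X G_0\to P$ be the pulled-back map.

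\textbf{Step 1: $p$ is an open surjection whose Čech groupoid is $G$.}
\begin{itemize}
\item Openness of $d_0,d_1$ is what makes $p$ open. A sheaf on $G_0$ with $G$-action has its étale space over $G_0$ saturated under the open maps $d_0,d_1$, so $p_!$ exists and satisfies Frobenius (cf.\ Definition~\ref{def:etale}).
\item Surjectivity of $p$ means that $p^*$ is faithful. This holds because $p^*$ is the forgetful functor $U$, which is faithful by construction.
\item The harder claim is the canonical equivalence $\baseS_X G_0\times_{B(\baseS X,G)}\baseS_X G_0\simeq \baseS_X G_1$, together with the corresponding identification for the triple fibre product with $G_1\times_{G_0}G_1$. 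I would prove it by comparing universal properties of generalised points. A point of the left side is a pair of points $x,y$ of $G_0$ together with an isomorphism $p(x)\cong p(y)$. By the descent description, applied to the universal $G$-torsor over $G_0$, such an isomorphism is exactly an arrow $g$ with $d_0g=x$ and $d_1g=y$. This step is where openness is genuinely used, and it is the step I expect to be the main obstacle.
\end{itemize}

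\textbf{Step 2: base change.} Open surjections of toposes are pullback-stable (\cite[C3.1]{Elephant}), so $p'$ is an open surjection. By Theorem~\ref{thm:JTopenSURJ}, $p'$ is then an effective descent morphism. Hence $P\simeq \Des(p')$, the descent topos of the Čech groupoid of $p'$ (Definition~\ref{def:eff-descent}).

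\textbf{Step 3: identify the Čech groupoid of $p'$ with $\Phi^{\#}(G)$.}
\begin{itemize}
\item Iterated fibre products over $P$ of the pulled-back map are the pullbacks along $\baseS X'\to\baseS X$ of the iterated fibre products over $B(\baseS X,G)$, since limits commute with limits.
\item By Step 1, these are $\baseS X'\times_{\baseS X}\baseS_X G_n$ for $n=0,1,2$.
\item A pullback of a localic topos over $\baseS X$ along $\baseS X'\to\baseS X$ is localic over $\baseS X'$, with locale $\Phi^{\#}(G_n)$ (\cite[\S 1.6]{Moerdijk}). So the Čech groupoid of $p'$ is the groupoid of localic toposes associated with $\Phi^{\#}(G)$.
\item The structure maps match because both are induced by the same projections and units.
\end{itemize}

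\textbf{Conclusion.} Combining the steps gives $B(\baseS X',\Phi^{\#}(G))\simeq \Des(p')\simeq P$. Finally, I would check that this equivalence is the canonical comparison map induced by the universal property of the pseudo-colimit $B(\baseS X',\Phi^{\#}(G))$. That check is routine, since both functors commute with the forgetful functors to sheaves on $\Phi^{\#}(G_0)$.
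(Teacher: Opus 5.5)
The paper gives no proof of this statement; it simply quotes \cite[Theorem 6.7]{Moerdijk}. Your argument therefore has to stand on its own, and it breaks at the third bullet of Step 1. That bullet is false for a general open localic groupoid.

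The \v{C}ech groupoid of $p\colon \baseS_X G_0\to B(\baseS X,G)$ is not $G$ but its \emph{\'{e}tale completion} $\widehat G$. Here $\widehat G_1$ is the locale corresponding to $\baseS_X G_0\times_{B(\baseS X,G)}\baseS_X G_0$, and the canonical map $G_1\to\widehat G_1$ induced by the 2-cell $p d_0\cong p d_1$ need not be an isomorphism.

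The paper itself supplies a counterexample. Take $X=\{\ast\}$ and let $G$ be $(0,\infty)$, viewed as a localic group acting on $G_0=\{\ast\}$.
\begin{itemize}
\item $G$ is open, because $(0,\infty)$ is overt (proof of Theorem~\ref{thm:DESCENTsinglePRIME}).
\item $G$ is connected, so $\BG\simeq\Set$ by Example~\ref{ex:Bunge}.
\item Hence $p$ is an equivalence and $\Set\times_{\BG}\Set\simeq\Set$, whereas $\baseS G_1=\baseS(0,\infty)\not\simeq\Set$.
\end{itemize}
Your generalised-points heuristic fails exactly at the step you flagged. An isomorphism $p(x)\cong p(y)$ need not come from an arrow of $G$, because $\BG$ does not classify $G$-torsors (Discussion~\ref{dis:classifyDIStorsors}, Observation~\ref{obs:Lurie}). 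Openness of $d_0,d_1$ does not rescue this; it only gives you openness of $p$.

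What your Steps 2--3 actually establish is therefore
\[\baseS X'\times_{\baseS X}B(\baseS X,G)\simeq B\bigl(\baseS X',\Phi^{\#}(\widehat G)\bigr).\]
To reach the theorem you would still need to show that the groupoid map $\Phi^{\#}(G)\to\Phi^{\#}(\widehat G)$ induces an equivalence of descent toposes. For $\Phi=\id$ this is just effective descent along $p$. For general $\Phi$, however, it amounts to showing that \'{e}tale completion commutes with base change. That is not formal, and it is essentially the full content of Moerdijk's theorem.

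As written, your argument proves the statement only when $G$ is already the \v{C}ech groupoid of an open surjection. That does cover the one place the paper uses the theorem. In Theorem~\ref{thm:ALLNAplace}, the groupoid $N$ is identified via $\langle\pi,\M\rangle$ with the \v{C}ech groupoid of the open surjection $(0,\infty)\to\{\ast\}$ (proof of Theorem~\ref{thm:TRIVIALfreeTRANSaction}).
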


In Moerdijk's original statement of Theorem~\ref{thm:MoerdijkSTABILITY}, $\mathbf{Loc}(X)$ is taken to be the category of internal locales in the topos $\baseS X$, but this can be shown to be equivalent to the slice category of localic spaces over $X$ \cite[C1.6]{Elephant}. This gives a concrete point-free interpretation of working internally to a topos, which we shall use throughout the paper. The following convention develops this point-free picture to identify an important methodological principle, already used in \cite{NV}.


\begin{convention}[``Fixing $x$'']\label{conv:fixingx} Suppose we wish to construct a map with multiple arguments, such as
	\[
	f\colon[\thT]\times [\thU]\rightarrow [\thU'].
	\]
We shall often say ``fix $x\in [\thT]$'', before constructing a map
	\[
	f_{x}\colon[\thU]\rightarrow [\thU']
	\text{.}
	\]
Formally, fixing $x$ means working over $[\thT]$, or equivalently, internally to the topos $\baseS[\thT]$. Thus $f_x$ corresponds to a morphism
$$\langle p,f\rangle\colon[\thT]\times[\thU]\longrightarrow[\thT]\times[\thU']$$
over $[\thT]$, where $p$ denotes the projection to $[\thT]$. Equivalently, it is a morphism in the slice category $\mathbf{Loc}([\thT])$:
\begin{equation}
\begin{tikzcd}
{[\thT]\times[\thU]}
\ar[rr, "{\langle p, f\rangle}"]
\ar[dr, swap, "p"]
&& {[\thT]\times[\thU']}
\ar[dl, "p"]
\\
& {[\thT]}
\end{tikzcd}.
\end{equation}
Further context can be found in \cite[\S 9]{VickersPtfreePtwise}.
\end{convention}

\subsubsection{Lax Descent} There is also an important variant of Construction~\ref{cons:stdDESCENT} known as {\em lax descent}.

\begin{construction}[Lax Descent]\label{cons:LAXdescent} Given a 2-truncated simplicial space $\mathcal{E}_{\bullet}$, the \emph{lax descent category} of $\mathcal{E}_\bullet$, which we denote $\LDes$, is defined exactly as $\Des$ of Construction~\ref{cons:stdDESCENT} except that the descent data $\theta$ is no longer required to be an isomorphism.
\end{construction}

The following remarks clarify the distinction between standard vs. lax descent by examining the significance of requiring the descent data to be an isomorphism. 

\begin{remark}[Coinserters vs. Coequalisers]\label{rem:coinsert} Let $\mathcal{C}$ be a category, and consider the following diagram in $\mathcal{C}$:
	\begin{equation}\label{eq:COEQvsCOINSERT}
	\begin{tikzcd}
	A  \ar[r,shift left=.75ex,"f"]\ar[r,shift right=.75ex,swap,"g"] & B
	\end{tikzcd} 
	\end{equation}
The \textit{coequaliser} of $(f,g)$, when it exists, gives the universal morphism $h\colon B\rightarrow C$ satisfying $hf=hg$. A weaker (and less familiar) construction is the \emph{coinserter} of $(f,g)$, which gives the universal morphism $h\colon B\rightarrow C$ equipped with a 2-cell $hf\rightarrow hg$. In particular, unlike the coequaliser, the 2-cell of the coinserter is not required to be invertible.
	
In our setting, the standard descent may be regarded as a (pseudo-)coequaliser in $\mathfrak{Top}$ subject to the relevant descent conditions, whereas lax descent realises the corresponding (pseudo-)coinserter. The two constructions therefore agree in some cases, but differ in general.
\end{remark}

\begin{discussion}[Standard Descent \& Group Completion]\label{dis:GroupoidReflection} Consider an internal category $\mathbb{M}$ in the topos $\Set$ which we represent as 
	\begin{equation}\label{eq:groupCOMPex}
	\begin{tikzcd} \mathbb{M}_2\ar[r, bend right, shift right=1ex] \ar[r] \ar[r,bend left, shift left=1ex]&  \mathbb{M}_1 \ar[r, bend right,shift right=1ex] \ar[r, bend left, shift left=1ex] & \mathbb{M}_0 \ar[l] \end{tikzcd}
	\end{equation}	
	where the arrows commute in the obvious way. We may therefore regard $\mathbb{M}_1$ as a discrete monoid acting on the set $\mathbb{M}_0$. Unpacking Remark~\ref{rem:coinsert}, the lax descent construction retains the monoid action, whereas standard descent forces the action maps to become invertible and thus replaces the monoid by its groupoid/group completion; see \cite[Example B3.4.14]{Elephant}. 
	
	This association of standard descent with group completion is suggestive, particularly because group completion sometimes entails a loss of information -- a well-known example is the monoid $\left(\mathbb{N}\cup\{\infty\},+\right)$, whose group completion trivialises since $n+\infty=\infty$ for every $n$.
	This alerts us to the same possibility when applying standard descent: namely, that requiring the descent data to be invertible may collapse important distinctions preserved by lax descent. 
	
	
\end{discussion}

\begin{remark}\label{rem:LaxTopos} It is natural to ask if $\LDes$ is a topos, just as in Construction~\ref{cons:stdDESCENT}? The answer is yes, although the result appears to be folklore; see \cite[Remark B3.4.10]{Elephant} for some details. In any case, we note that our Theorem~\ref{thm:thmD} gives a direct proof that our lax descent category of interest is indeed a topos.
\end{remark}


\section{The Global Picture}\label{sec:global-places} 

Section~\ref{sec:Intro} gave the number-theoretic context, and discussed the subtleties in reconciling the non-Archimedean and Archimedean aspects of the picture (Question~\ref{qn:TENSION}); Section~\ref{sec:prelim} laid the topos-theoretic groundwork for this paper, with particular emphasis on the point-free perspective. This section now brings these perspectives together by constructing the topos of places of $\Q$. 

\smallskip

Recall that an {\em absolute value} is a map
\begin{align}
|\cdot|\colon\mathbb{Q}&\longrightarrow [0,\infty)\\
x&\longmapsto |x| \nonumber 
\end{align}
that is multiplicative, positive definite (i.e. $|x|=0$ iff $x=0$) and satisfies the triangle inequality. This was axiomatised in \cite[\S 2]{NVOstrowski} by an essentially propositional geometric theory. Thus, following Definition~\ref{def:localic}, its corresponding point-free space, denoted $$[av]$$ 
is a localic space. Next, given two absolute values $|\cdot|_1,|\cdot|_2$, define an equivalence relation $\sim$ where 
\begin{equation}\label{eq:place-1}
|\cdot|_1\sim|\cdot|_2 \iff \exists \alpha\in(0,1]\,.\bigg( |x|^{\alpha}_1=|x|_2 \,\,\text{or}\,\, |x|^{\alpha}_2=|x|_1, \,\text{for all non-zero}\, x\in\mathbb{Q} \bigg). 
\end{equation}
An equivalence class with respect to $\sim$ is called a \textit{place}. As it turns out, two absolute values belong to the same place if and only if they yield homeomorphic completions of $\Q$.\footnote{For the constructive mathematician: this is not only true classically, but geometrically as well, but we omit the details here.} Moreover, notice that exponentiation satisfies the obvious unit and associativity identities
\begin{equation}\label{eq:place-2}
|\cdot|^1=|\cdot|\qquad\text{and}\qquad (|\cdot|^\alpha)^\lambda = |\cdot|^{\alpha\cdot\lambda}\qquad\text{for any}\,\alpha,\lambda\in (0,1]\,\,.
\end{equation}

Let us reformulate this data as a 2-truncated simplicial space:

\begin{equation}\label{eq:PLACES-topos}
\begin{tikzcd} (0,1] \times (0,1]\times [av] \ar[r, bend right, shift right=1ex, swap, "\pi_{12}"] \ar[r,swap,"\m\times\id"] \ar[r,bend left, shift left=1ex,  "\pi_{02}"]& (0,1]\times  {{[av]}}  \ar[r, bend right,shift right=1ex, swap, "\exp"] \ar[r, bend left, shift left=1ex, "\pi"] & {{[av]}} \ar[l, "s"] \end{tikzcd} \,\,, 
\end{equation}
with $(0,1]$ as the obvious interval of Dedekind reals. The projection map $\pi$ sends $(\alpha,|\cdot|)\mapsto |\cdot|$, while the map $\exp$ sends $(\alpha,|\cdot|)\mapsto|\cdot|^\alpha$. The unit map is given by $s(|\cdot|)=(1,|\cdot|)$, while $\pi_{02},\pi_{12}$ are the obvious projections. Finally, the map $\m\times\id$ records composition of exponentiations, so that
$$
(\alpha,|\cdot|),(\lambda,|\cdot|^\alpha)
\longmapsto
(\alpha\lambda,|\cdot|)\,\,.
$$
The unit and associativity identities~\eqref{eq:place-2} ensure that these maps satisfy the required simplicial identities.
\smallskip

Let us briefly verify Diagram~\eqref{eq:PLACES-topos} is well-defined. Both the absolute values {\em and} the interval of Dedekind reals $(0,1]$ are axiomatisable by a geometric theory, and thus each determines a point-free space by the Structure Theorem~\ref{thm:structure-thm}. Their products and pullbacks exist in the category of point-free spaces by \cite[Theorem B4.1.1]{Elephant}. Finally, the exponentiation map 
$$\exp\colon(0,1]\times[av]\longrightarrow[av],
\qquad
(\alpha,|\cdot|)\longmapsto|\cdot|^\alpha,$$
is a point-free map because real exponentiation is geometric (Theorem~\ref{thm:xzetaDed}). Combined with the simplicial identities given by Equation~\eqref{eq:place-2}, conclude that Diagram~\eqref{eq:PLACES-topos} indeed defines a 2-truncated simplicial space. Consequently, and importantly for us, its lax descent topos exists. 

\begin{definition}[Topos of Places]\label{def:places} Let $\calP$ denote the lax descent topos associated to Diagram~\eqref{eq:PLACES-topos}. We call $\calP$ the {\em topos of places}, and write 
	$$[\mathrm{places}]$$
for its corresponding point-free space. 
\end{definition}

We now have the existence of a topos of places, giving the beginnings of an answer to Question~\ref{qn:TENSION} from the topos-theoretic perspective. The next action item would be to characterise its corresponding point-free space of places. Sections~\ref{sec:NonArch} and \ref{sec:Arch} analyse its Archimedean and non-Archimedean aspects separately, before comparing their sheaf structure in Section~\ref{sec:ARCHvsNONARCH}.

\section{Non-Archimedean Places}\label{sec:NonArch}

We now analyse the non-Archimedean places. Following the conventions of \cite{NVOstrowski}, an absolute value 
$$|\cdot|\colon\Q\to [0,\infty)\,$$
is called {\em non-Archimedean} if there exists a (unique) prime $p\in\N_+$ such that $|p|<1$.\footnote{Notice this excludes the trivial absoute value.} Applying Ostrowski's Theorem~\ref{thm:ostrowskiQ}, we obtain the following basic observation.

\begin{observation}\label{obs:stillnonARCH} Fix some prime $p$. Let $|\cdot|$ be an non-Archimedean absolute value such that $|p|<1$.
	\begin{enumerate}[label=(\roman*)]
		\item For every $\alpha\in(0,\infty)$, the absolute value $|\cdot|^\alpha$ is again non-Archimedean, with $|p|^\alpha<1$. In particular, exponentiation defines a group action of $(0,\infty)$ on the space 
	$$	[av_{NA};p]$$
		of non-Archimedean absolute values satisfying $|p|<1$.
		\item There is a unique $\alpha\in(0,\infty)$ such that $$|\cdot|=|\cdot|_p^\alpha\,,$$ where $|\cdot|_p$ is the standard $p$-adic absolute value. In particular, \[ [av_{NA};p]\cong(0,\infty). \]
		
	\end{enumerate}
\end{observation}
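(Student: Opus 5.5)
The plan is to reduce both parts to Ostrowski's Theorem~\ref{thm:ostrowskiQ}(ii) together with the exponent laws of Theorem~\ref{thm:xzetaDed}, arguing pointwise in the sense of Definition~\ref{def:ptFREEspace}. Every step must be geometric, with no case splits relying on excluded middle.

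For (i), fix $\alpha\in(0,\infty)$ and a non-Archimedean $|\cdot|$ with $|p|<1$. I would first check that $|\cdot|^\alpha$ is again an absolute value on $\Q$. Multiplicativity follows from $(xy)^\alpha=x^\alpha y^\alpha$, and positive definiteness from $0^\alpha=0$ and positivity of $x^\alpha$ for $x>0$. The triangle inequality is the one point needing care, since it can fail for $\alpha>1$ in the Archimedean case. I would instead prove the strong form $|x+y|\le\max(|x|,|y|)$ for $|\cdot|$: this holds because $|n|\le 1$ for all integers $n$, which follows from $|p|<1$ together with Ostrowski (or directly from \cite{NVOstrowski}). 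The strong form is preserved by $t\mapsto t^\alpha$, since this map is monotone on $[0,\infty)$, and it implies the ordinary triangle inequality. Monotonicity also gives $|p|^\alpha<1$, and indeed $q^\alpha<1$ whenever $q<1$ and $\alpha>0$, which is a geometric statement about Dedekind reals. So $\exp$ restricts to a map $(0,\infty)\times[av_{NA};p]\to[av_{NA};p]$. The group action identities $|\cdot|^1=|\cdot|$ and $(|\cdot|^\alpha)^\lambda=|\cdot|^{\alpha\lambda}$ are exactly the exponent laws of Theorem~\ref{thm:xzetaDed}, and inverses come from $\alpha^{-1}$.

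For (ii), Theorem~\ref{thm:ostrowskiQ}(ii) supplies $\alpha$ with $|\cdot|=|\cdot|_p^\alpha$, and its uniqueness of $p$ matches the hypothesis $|p|<1$. To make $\alpha$ explicit and unique, I would compute $|p|=p^{-\alpha}$, which gives $\alpha=-\log|p|/\log p$. This is a geometric function of $|\cdot|$, using the geometric logarithm inverse to $\exp$ from \cite{NV}. Uniqueness follows because $\alpha\mapsto p^{-\alpha}$ is injective. This yields mutually inverse maps
$$[av_{NA};p]\to(0,\infty),\quad |\cdot|\mapsto -\log|p|/\log p,\qquad (0,\infty)\to[av_{NA};p],\quad \alpha\mapsto|\cdot|_p^\alpha.$$
One composite is the identity by the exponent laws, and the other by Ostrowski, so the two maps give an isomorphism of localic spaces.

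The main obstacle is ensuring that the isomorphism in (ii) is genuinely point-free, i.e. that $\alpha$ is extracted by a geometric construction rather than by a classical choice. Since Theorem~\ref{thm:ostrowskiQ} already asserts $[av_{NA};p]\cong(0,\infty)$, I would simply quote that isomorphism. I would then verify that it is equivariant for the $(0,\infty)$-action, where $(0,\infty)$ acts on itself by multiplication: $|\cdot|_p^{\alpha\lambda}=(|\cdot|_p^\alpha)^\lambda$.
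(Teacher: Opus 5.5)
Your proposal is correct and takes essentially the paper's route. The paper gives no separate argument: it derives the observation directly from the geometric Ostrowski Theorem~\ref{thm:ostrowskiQ}, which already contains $[av_{NA};p]\cong(0,\infty)$. Your extra checks (triangle inequality via the ultrametric inequality, the explicit inverse $\alpha=-\log|p|/\log p$, and equivariance) are routine elaborations of that; note that part (i) also follows at once from $(|\cdot|_p^\beta)^\alpha=|\cdot|_p^{\alpha\beta}$ once Ostrowski gives $|\cdot|=|\cdot|_p^\beta$ (the Archimedean branch is excluded since $p^\beta>1$), so the detour through $|n|\le 1$ is unnecessary.
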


As written, Observation~\ref{obs:stillnonARCH} is standard elementary number theory;
we state it explicitly to emphasise that Theorem~\ref{thm:ostrowskiQ} ensures that it also holds geometrically (in the sense of Convention~\ref{conv:geometricity}). This yields two main consequences that will be important for this section.
\begin{itemize}
	\item  Item (ii) of the Observation says the non-Archimedean absolute values naturally decompose into a family of subspaces $(0,\infty)$ indexed by the primes, where each $(0,\infty)$ parametrises all absolute values belonging to the same non-Archimedean place.  This justifies ``localising'' Diagram~\eqref{eq:PLACES-topos} and analysing the non-Archimedean places prime by prime. 
	\item Item (i) invites us to review the decision to define the topos of places using lax descent (Definition~\ref{def:places}). Globally, exponentiation gives a monoidal action of $(0,1]$ on absolute values, making lax descent the natural choice. Once we restrict to a single prime, however, exponentiation of absolute values extends to a group action of $(0,\infty)$. Thus we may replace lax descent by standard descent in the non-Archimedean case; see Discussion~\ref{dis:GroupoidReflection}. 
\end{itemize}

\subsection{Local: At single $\frap$}\label{subsec:localNA} Throughout this subsection, we fix the following hypothesis. 

\begin{hypothesis} Denote $\ISpec(\Z)_{\neq (0)}$ as \emph{the space of non-trivial prime ideals of $\Z$}, i.e. prime ideals that possess a non-zero integer. We shall fix a single $$\frap\in\ISpec(\Z)_{\neq (0)}.$$ 
By \cite[Lemma 1.14]{NVOstrowski}, this is equivalent to fixing a prime $p\in\N_+$, obtained by $\frap=(p)$.\footnote{We quote this lemma explicitly to indicate there exists a geometric proof of this elementary fact. We remark that $\frap=(p)$ does not come for free since the standard proof (see e.g. \cite[\S 3.7]{vdW1}) is not geometric -- see \cite[Remark 1.15]{NVOstrowski} for details.} 
	\end{hypothesis}

By Observation~\ref{obs:stillnonARCH}, fixing a prime $p$ identifies a single family of non-Archimedean absolute values 
$$[av_{NA};p]\cong (0,\infty)\,,$$
on which the group $(0,\infty)$ acts by exponentiation. Under the parametrisation
$$|\cdot|=|\cdot|^\beta_p\,,$$
this action is simply multiplication
$$|\cdot|_p^\beta\longmapsto
\bigl(|\cdot|_p^\beta\bigr)^\alpha
=|\cdot|_p^{\alpha\beta}.$$
This suggests the following ``localisation'' of Diagram~\eqref{eq:PLACES-topos}:

\begin{construction}\label{cons:groupoidG} Define the localic groupoid $N$ as
	\begin{equation}\label{eq:simplicialLOCALP}
	\begin{tikzcd} \Big((0,\infty)\times (0,\infty) \Big) \times (0,\infty)\ar[r, bend right, shift right=1ex, swap, "\pi_{12}"] \ar[r,swap,"\m\times\id"] \ar[r,bend left, shift left=1ex,  "\pi_{02}"]& (0,\infty)\times (0,\infty) \ar[r, bend right,shift right=1ex, swap, "\M"] \ar[r, bend left, shift left=1ex, "\pi"] & (0,\infty) \ar[l, "s"] \end{tikzcd}\,\,.
	\end{equation}
Here $\pi(\alpha,\beta)=\beta$ is the target map and
$\M(\alpha,\beta)=\alpha\cdot \beta$ the source map. The unit is
$s(\beta)=(1,\beta)$, while $\m,\pi_0,\pi_1$ are the multiplication and projection maps implementing composition. We call $N$ the {\em action groupoid} associated to the localic group $(0,\infty)$ acting on the space $(0,\infty)$.
\end{construction}

Construction~\ref{cons:groupoidG} leads to the following key definition, and sets up the main test problem of this section.

\begin{definition}[Non-Archimedean Place]\label{def:STDdescent} Define $\mathcal{D}:=\mathrm{BN}$ to be the descent topos corresponding to the groupoid $N$ in Construction~\ref{cons:groupoidG}. Call $\D$ the \emph{topos of the non-Archimedean place associated to prime $p$}. 
\end{definition}

\begin{problem}\label{prob:STDdescent} Give a useful characterisation of $\D$.
\end{problem}


\begin{discussion}\label{dis:DefNA} Some orienting remarks. On the topos-theoretic side, Construction~\ref{cons:stdDESCENT} ensures that $\D$ is well-defined and is in fact a topos. On the number-theoretic side, the space $(0,\infty)$ parametrises the non-Archimedean absolute values associated to the prime $p$. The action groupoid $N$ records the fact that these absolute values are identified under the $\M$-action by $(0,\infty)$, and hence belong to a single place. Thus the descent topos $\D$ may be regarded as a point-free representation of the corresponding non-Archimedean place, justifying the terminology above.
\end{discussion}

In what follows, we work to improve our understanding of $[\thT_\D]$, the (quotient) space associated to the topos $\D$. To start, observe that $G_1$ represents a free transitive action of $(0,\infty)$ on $G_0$, and so there only exists a single $G_1$-orbit on $G_0$. Combined with the previous discussion of toposes as generalised spaces whose points may carry non-trivial automorphisms (Discussion~\ref{dis:descentAUTOMORPHISMS}), the following guess is reasonable.

\begin{guess}\label{guess:DhasAUTOMORPHISMS}  $[\thT_\D]$ is the singleton space $\{*\}$, with $(0,\infty)$ as the group of automorphisms acting on $\{*\}$.
\end{guess}

Very interestingly, Guess~\ref{guess:DhasAUTOMORPHISMS} turns out to be wrong. The fundamental reason behind this has to do with the misplaced expectation that  the quotient space ought to possess non-trivial automorphisms. As a baseline observation: non-triviality of the $G_1$-action does {\em not} imply non-triviality of the quotient space -- for instance, $\BG\simeq \Set$ for any connected localic group $G$ (see Example~\ref{ex:Bunge}). A similar issue arises in our setting: 

\begin{theorem:SINGLEPRIME}	$\mathcal{D}\simeq\Set$. Or, equivalently, $[\thT_\D]\cong \{*\}$.
\end{theorem:SINGLEPRIME}


In other words, the quotient space $[\thT_\D]$ is trivial: it is the singleton $\{*\}$ with \emph{no} non-trivial automorphisms. Comparing our groupoid $G$ with the previous example, one might then suspect that the connectedness of $(0,\infty)$ is the main culprit behind the trivialisation, but this is again a red herring.\footnote{The original proof of Theorem~\ref{thm:DESCENTsinglePRIME} did in fact rely on the observation that $(0,\infty)$ is a locally-connected connected space, but that approach required a fairly lengthy analysis of the sheaves on $M$. Some of this analysis has been repurposed by Section~\ref{sec:ARCHvsNONARCH}, which compares the sheaves of the Archimedean and non-Archimedean topos.} In fact, Theorem~\ref{thm:DESCENTsinglePRIME} follows from a more general result:

\begin{theorem}\label{thm:TRIVIALfreeTRANSaction} Let $G$ be a localic group acting on a localic space $M$. Consider its {\em action groupoid}, denoted $H$, depicted below with the obvious maps:
	\begin{equation}\label{eq:groupoidFREETRANSITIVE}
	\begin{tikzcd}  G\times G\times M \ar[r, bend right, shift right=1ex, swap, "\pi_{12}"] \ar[r,swap,"\m\times\id"] \ar[r,bend left, shift left=1ex,  "\pi_{02}"] & G\times M \ar[r, bend right,shift right=1ex, swap, "\M"] \ar[r, bend left, shift left=1ex, "\pi"] & M \ar[l, "s"] \end{tikzcd}\,.
	\end{equation}
Now suppose:
\begin{itemize}
	\item  The unique map $\rho\colon M\rightarrow \{*\}$ is an open surjection; and 
	\item $G$ induces a free transitive action on $M$, in the sense that 
\begin{align}\label{eq:free-trans-action}
\lranglet{\pi}{\M}\colon G\times M&\longrightarrow M\times M \\
(g,m)&\longmapsto(m,g\cdot m)\,\,\,, \nonumber 
\end{align}
	is an isomorphism.
\end{itemize}
\noindent \underline{Then},
$$\mathrm{BH}\simeq \Set\,.$$
\end{theorem}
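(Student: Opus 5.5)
The plan is to identify the action groupoid $H$ with the \v{C}ech groupoid of $\rho\colon M\to\{*\}$ (Example~\ref{ex:DescentGEOMMORPHISM}), and then apply Joyal--Tierney (Theorem~\ref{thm:JTopenSURJ}) to conclude that the descent topos is $\baseS\{*\}=\Set$ (Corollary~\ref{cor:Set}).

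\textbf{Step 1: matching the simplicial spaces.} The \v{C}ech groupoid of $\rho$ has $M$ in degree $0$, $M\times_{\{*\}}M=M\times M$ in degree $1$, and $M\times M\times M$ in degree $2$. Its face maps are the projections $\pi_0,\pi_1$ and $\pi_{01},\pi_{02},\pi_{12}$, and its degeneracy is the diagonal $\Delta$. The free transitive hypothesis gives an isomorphism $\phi_1:=\lranglet{\pi}{\M}\colon G\times M\to M\times M$, $(g,m)\mapsto(m,g\cdot m)$. In degree $2$ we define
\[
\phi_2\colon G\times G\times M\to M\times M\times M,\qquad (h,g,m)\mapsto (m,\,g\cdot m,\,hg\cdot m).
\]
This is an isomorphism because it factors through $\id_G\times\phi_1$ and then $\phi_1$ applied in the remaining coordinate. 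One checks pointwise that $\phi_0=\id_M$, $\phi_1$ and $\phi_2$ commute with all structure maps. Under $\phi_1$, the source and target maps $\pi,\M$ go to the two projections. The unit $s(m)=(1,m)$ goes to $\Delta(m)=(m,m)$. The three face maps $\pi_{12},\m\times\id,\pi_{02}$ go to the three projections $M^3\to M^2$. This step is mainly bookkeeping. The only care needed is to match the orientation convention for which of $\pi,\M$ is $d_0$ versus $d_1$, and which composable pair each projection picks out. If a mismatch appears, I would precompose with the inverse map $g\mapsto g^{-1}$, which reverses the orientation consistently.

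\textbf{Step 2: invariance of descent.} An isomorphism of $2$-truncated simplicial spaces (commuting up to isomorphism) induces an equivalence of the descent categories of Construction~\ref{cons:stdDESCENT}. The reason is that descent data $\theta\colon d_0^*F\to d_1^*F$ and its unit and cocycle conditions transport along the inverse images $\phi_1^*,\phi_2^*$. So $\mathrm{BH}\simeq\Des(\rho)$.

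\textbf{Step 3: effective descent.} By hypothesis $\rho$ is an open surjection, so Theorem~\ref{thm:JTopenSURJ} makes the comparison functor $\chi\colon\baseS\{*\}\to\Des(\rho)$ of Definition~\ref{def:eff-descent} an equivalence. Combining this with Corollary~\ref{cor:Set} gives
\[
\mathrm{BH}\simeq\Des(\rho)\simeq\baseS\{*\}=\Set .
\]

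\textbf{Main obstacle.} The delicate point is Step 1 in the point-free setting. The pullbacks $M\times_{\{*\}}M$ are taken in $\mathfrak{Top}$, and there they agree with the localic products by \cite[Theorem B4.1.1]{Elephant}. The verification must also be geometric. I would handle both by reasoning on generic points, following Definition~\ref{def:ptFREEspace}, since each identity above is a geometric construction on points.
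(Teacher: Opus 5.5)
Your proposal is correct and follows essentially the same route as the paper: you identify the action groupoid $H$ with the \v{C}ech groupoid of $\rho$ via $\lranglet{\pi}{\M}$, then invoke Joyal--Tierney effective descent for the open surjection $\rho$. The differences are presentational: the paper transports the groupoid action through Moerdijk's description of descent toposes as \'{e}tale $L$-spaces, whereas you transport descent data directly along an explicit isomorphism of 2-truncated simplicial spaces, and your degree-2 map $\phi_2$ fills in the ``bit more work'' on units and multiplication that the paper leaves implicit (one small correction: if an orientation mismatch did arise, the right fix is the swap $(x,y)\mapsto(y,x)$ on $M\times M$, i.e.\ the groupoid inversion $(g,m)\mapsto(g^{-1},g\cdot m)$, not inverting $g$ alone).
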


\begin{proof} Following Example~\ref{ex:DescentGEOMMORPHISM}, construct the \v{C}ech groupoid $V$ of $\rho\colon M\rightarrow \{*\}$ as below:
	\begin{equation}\label{eq:KERNELpairGROUPOID}
	\begin{tikzcd} M\times M \times M \ar[r, bend right, shift right=1ex, swap,"\pi_{12}"] \ar[r,"\pi_{02}"] \ar[r,bend left, shift left=1ex,  "\pi_{01}"]& M\times M \ar[r, bend right,shift right=1ex, swap, "\pi_0"] \ar[r, bend left, shift left=1ex, "\pi_1"] & M \ar[l, swap, "\Delta"] \ar[r,"\rho"] & \{*\} 
	\end{tikzcd}\,.
	\end{equation}
Let $\mathrm{BV}$ denote the descent topos associated to this (localic) groupoid. By hypothesis, $M\xrightarrow{\rho}\{*\}$ is an open surjection, and thus satisfies effective descent by Theorem~\ref{thm:JTopenSURJ}. Hence, deduce that
\begin{equation}\label{eq:BV=Set}
\mathrm{BV}\simeq \Set\,.
\end{equation} 
	
On the other hand, given any localic groupoid $L:=(L_0,L_1)$, its corresponding descent topos $\mathrm{BL}$ may be equivalently characterised as the category of \'{e}tale $L$-spaces, whose objects are \'{e}tale bundles $E\xrightarrow{p} L_0$ equipped with an $L_1$-action
$$L_1\times_{L_0} E \xrightarrow{\bigcdot} E$$ 
satisfying the usual axioms; see \cite[\S 5]{Moerdijk} for details. Applying this characterisation to our localic groupoids $V$ and $H$, we see that the objects of $\mathrm{BV}$ and $\mathrm{BH}$ are both étale bundles
$$E\xrightarrow{p}M,$$
equipped, respectively, with an $M\times M$-action and a $G\times M$-action.


By hypothesis~\eqref{eq:free-trans-action}, the free transitive action of $G$ on $M$ yields an isomorphism of spaces
$$\lranglet{\pi}{\M}\colon G\times M \xrightarrow{\cong} M\times M\,.$$
A routine check shows that this isomorphism is compatible with the groupoid structure. [For example, to see that  $\lranglet{\pi}{\M}$ is compatible with the source and target maps, notice it makes the inner and outer triangles of the diagram below commute
\begin{equation}\label{eq:Step2Diagram}
\begin{tikzcd}
G\times M \ar[rr,dashed] \ar[dr, shift right = 2, swap, "\M"]\ar[dr,shift left=0.5,"\pi"] && M\times M  \ar[dl, shift right = 0.5, swap, "\pi_0"]\ar[dl,shift left=2,"\pi_1"]\\
& M 
\end{tikzcd}\,\,.
\end{equation}
since its definition yields the identities
	\[\pi(g,m)=m=\pi_0\circ \langle\pi,\M\rangle(g,m)\]
\[\M(g,m)=g\cdot m=\pi_1\circ \langle\pi,\M\rangle(g,m).\]
A bit more work shows the corresponding identities for the unit and multiplication maps.] In other words, the given isomorphism~\eqref{eq:free-trans-action} transports the $M\times M$ action on \'{e}tale-bundles over $M$ to a $G\times M$-action (and vice versa). This yields the equivalence of toposes 
\begin{equation}\label{eq:BV=BH}
 \mathrm{BV}\simeq \mathrm{BH}.
\end{equation}
Combined with the earlier equivalence~\eqref{eq:BV=Set}, we thus obtain 
$$\mathrm{BH}\simeq \Set\,.$$

\end{proof}

\begin{discussion}[On the hypothesis ``open"] The locale theorist may ask: why did we require $M\xrightarrow{\rho} \{*\}$ to be an \emph{open} surjection in Theorem~\ref{thm:TRIVIALfreeTRANSaction}? After all, as Borceux proves in \cite[Example 1.6.5c]{Borceux}, the unique map $L\rightarrow\{*\}$ is open for any given locale $L$. This appears to indicate that the additional hypothesis of openness is unnecessary.
	
The issue is that Borceux's proof is classical. Recall that $\Omega$, i.e. the frame of opens on $\{*\}$, corresponds to the frame of truth values. In his argument, Borceux identifies this with the classical frame of Boolean truth values $\Omega=\{\bot,\top\}$, a move which implicitly uses Law of Excluded Middle. In particular, while it is classically true that the unique map $L\to \{\ast\}$ is open for any locale $L$, constructively this is not the case; see \cite[\S 6.2]{ViLocCompII}. Since we want our result to be topos-valid, this justifies our original hypothesis.

\end{discussion}


As an application of Theorem~\ref{thm:TRIVIALfreeTRANSaction}, we obtain the more quotable result:

\begin{theorem}\label{thm:DESCENTsinglePRIME}  $\D\simeq \Set$. Or, equivalently, $[\thT_\D]\cong \{*\}$.
\end{theorem}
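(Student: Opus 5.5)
The plan is to apply Theorem~\ref{thm:TRIVIALfreeTRANSaction} directly, taking $G=(0,\infty)$ as the localic group (under multiplication) and $M=(0,\infty)$ as the localic space it acts on by multiplication. With these choices the action groupoid $H$ of Diagram~\eqref{eq:groupoidFREETRANSITIVE} is literally the groupoid $N$ of Construction~\ref{cons:groupoidG}. Hence $\D=\mathrm{BN}=\mathrm{BH}$, and the theorem reduces to checking the two hypotheses. The reformulation $[\thT_\D]\cong\{*\}$ then follows from Corollary~\ref{cor:Set}, since $\baseS\{\ast\}=\Set$.

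\emph{Free transitivity.} I will show that
\[
\lranglet{\pi}{\M}\colon (0,\infty)\times(0,\infty)\to(0,\infty)\times(0,\infty),\qquad (\alpha,\beta)\mapsto(\beta,\alpha\beta),
\]
is an isomorphism. The obvious inverse is $(\beta,\gamma)\mapsto(\gamma\cdot\beta^{-1},\beta)$. This is a point-free map because multiplication and reciprocal on the positive Dedekind reals are geometric constructions. Reciprocal is available geometrically, since $\beta^{-1}=\beta^{-1\cdot}$ via the exponentiation of Theorem~\ref{thm:xzetaDed}, or directly by swapping the cuts. The two composites are the identity by the field laws for positive reals, which hold geometrically.

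\emph{Open surjectivity of $\rho\colon(0,\infty)\to\{*\}$.} This is the step I expect to need the most care, since openness of $L\to\{*\}$ is not automatic constructively, as the preceding discussion stresses. I will argue that $(0,\infty)$ is an overt (open) locale: its frame has a base of rational open intervals $(q,r)$ with $0<q<r$. Each such basic open is positive, i.e.\ inhabited, and the left adjoint $\rho_!$ sends a join of basic opens $\bigvee_i (q_i,r_i)$ to the truth value ``$\exists i$''. Frobenius reciprocity then holds because meets with a truth value distribute over the join. A cleaner alternative is to quote the standard fact that $\R$ is overt and that open sublocales of overt locales are overt.

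For surjectivity, it suffices to exhibit a global point $1\in(0,\infty)$. Then $\rho^{-1}\colon\Omega\to\Omega_{(0,\infty)}$ is split injective by pulling back along the point, which gives the injectivity required in Remark~\ref{rem:openSURJ}. With both hypotheses in hand, Theorem~\ref{thm:TRIVIALfreeTRANSaction} gives $\D\simeq\Set$.
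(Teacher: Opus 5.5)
Your proposal is correct and matches the paper's proof essentially step for step: it reduces to Theorem~\ref{thm:TRIVIALfreeTRANSaction}, inverts $\lranglet{\pi}{\M}$ via $(x,y)\mapsto(yx^{-1},x)$, obtains overtness of $(0,\infty)$ from overtness of $\R$ (via Vickers' localic completion) together with stability of overtness under open subspaces, and gets surjectivity from the section $\{\ast\}\to(0,\infty)$ picking out $1$. Your direct positivity-of-basic-opens sketch for overtness is also valid in principle. Note, however, that defining $\rho_!$ there needs the nontrivial fact that a nonempty rational interval can only be covered by an inhabited family, and this is exactly what the quoted overtness result supplies.
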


\begin{proof} We emphasise that our proof is geometric (in the sense of Convention~\ref{conv:geometricity}). Recall that $\D$ is the descent category associated to the action groupoid $N$ from Construction~\ref{cons:groupoidG}. Examining the hypotheses of Theorem~\ref{thm:TRIVIALfreeTRANSaction}, it thus suffices to show that:
	\begin{enumerate}[label=(\alph*)]
		\item The $\M$-action of $N$ is both free and transitive;
		\item The unique map $!\colon (0,\infty)\rightarrow \{*\}$ is an open surjection.
	\end{enumerate}

For Condition (a), construct the inverse to the canonical map $\lranglet{\pi}{\M}$ as below:
	\begin{align*}\label{eq:Step2DiagramINVERSE}
\nu\colon (0,\infty)\times (0,\infty) &\longrightarrow (0,\infty)\times (0,\infty)\\
	(x,y)&\longmapsto(yx^{-1},x).
	\end{align*}
A straightforward computation verifies that $\langle \pi,\M\rangle$ and $\nu$ are indeed inverses, yielding the desired isomorphism.
For Condition (b), we invoke Vickers' framework of generalised metric spaces. A localic space $Y$ is called {\em overt} if its unique map $!\colon Y\to \{\ast\}$ is open. Since $\R$ is the localic completion of a generalised metric space, it is overt by \cite[Corollary 6.9]{ViLocCompII}. Since $(0,\infty)\hookrightarrow\mathbb R$ is an open subspace, and overtness is preserved by open subspaces, $(0,\infty)$ is overt as well. Stated more plainly, the map $!\colon (0,\infty)\to\{*\}$ is indeed open. Finally, $!$ is a surjection since it admits a section
$$s\colon \{\ast\}\to (0,\infty)\qquad \{\ast\}\longmapsto 1\,.$$
Why? Notice $!\circ s=\id_{\{\ast\}}$ in $\Loc$, which yields
$s^{-1}\circ !^{-1}=\id_{\Omega_{\ast}}$ at the level of frame homomorphisms. That is, $!^{-1}$ is a split monomorphism in $\Frm$, and so $!$ defines a surjection as per Remark~\ref{rem:openSURJ}. 
\end{proof}

\subsection{Global: Over all $\frap$}\label{subsec:globalNA} Per the definitions of \cite{NVOstrowski}, an {\em ultrametric absolute value} $|\cdot|$ is one which satisfies the ultrametric inequality $|x+y|\leq \max\{|x|,|y|\}$. This includes the non-Archimedean absolute values (= the non-trivial ultrametric absolute values) as well as the {\em trivial} absolute value $|\cdot|_0$ given by $|x|=1$ for every $x\neq 0$.

By Theorem~\ref{thm:DESCENTsinglePRIME}, each ultrametric place (including the trivial place) corresponds to a singleton -- as classically expected. Combined with Discussion~\ref{dis:ISpec}, this suggests that the space of ultrametric places ought to correspond to $\ISpec(\Z)$. However, perhaps surprisingly, we presently do not know how to justify this identification geometrically. This has to do with topological subtleties in reconciling the trivial and non-trivial ultrametric absolute values, which was already an issue even before quotienting \cite[\S 6]{NVOstrowski}. Nonetheless, we can still recover the expected picture after removing the trivial place: the space of non-Archimedean places is precisely $\ISpec(\Z)_{\neq 0}$.

To see this, recall that action groupoid $N$ in Section~\ref{subsec:localNA} was constructed after fixing $\frap=(p)$. In the language of Convention~\ref{conv:fixingx}, this means we were working internally within the topos $\baseS(\ISpec(\Z)_{\neq (0)})$. To recover the entire space of non-Archimedean places, we externalise the descent construction. Start by constructing the pullback diagram of spaces
\begin{equation}\label{eq:pullbackISPEC}
\begin{tikzcd}
\Phi^\#(N) \ar[r] \ar[d] & N\ar[d]\\
\ISpec(\Z)_{\neq (0)} \ar[r,"\Phi"] & \{\ast\}
\end{tikzcd}
\end{equation}
where $\Phi\colon \ISpec(\Z)_{\neq (0)} \to \{\ast\}$ denotes the unique terminal map in $\Loc$. We define the topos of non-Archimedean places as $$\baseS[\mathrm{places}_{NA}]:=	\B(\baseS(\ISpec(\mathbb{Z})_{\neq (0)}), \Phi^\#(N)).$$ We shall regard this topos as the category of sheaves on some space $[\mathrm{places}_{NA}]$. The following theorem verifies that we obtain the expected characterisation.

\begin{theorem}\label{thm:ALLNAplace} $[\places_{NA}]\cong\ISpec(\mathbb{Z})_{\neq (0)}$.
\end{theorem}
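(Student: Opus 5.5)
The plan is to apply Moerdijk's Stability Theorem~\ref{thm:MoerdijkSTABILITY} to the pullback square~\eqref{eq:pullbackISPEC}, and then to use Theorem~\ref{thm:DESCENTsinglePRIME} to collapse the descent factor. Take $X=\{\ast\}$ and $X'=\ISpec(\Z)_{\neq(0)}$, with $\Phi$ the terminal map. The groupoid $N$ of Construction~\ref{cons:groupoidG} is a localic groupoid in $\mathbf{Loc}(\{\ast\})=\Loc$, so the stability theorem gives
\[
\baseS[\mathrm{places}_{NA}]=\B\bigl(\baseS(\ISpec(\Z)_{\neq(0)}),\Phi^\#(N)\bigr)\simeq \baseS(\ISpec(\Z)_{\neq(0)})\times_{\Set}\mathrm{BN}.
\]
Since $\mathrm{BN}=\D\simeq\Set$ by Theorem~\ref{thm:DESCENTsinglePRIME}, and $\Set$ is terminal in $\mathfrak{Top}$ (Corollary~\ref{cor:Set}), the product with $\Set$ over $\Set$ is just $\baseS(\ISpec(\Z)_{\neq(0)})$. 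Passing back to point-free spaces then gives $[\mathrm{places}_{NA}]\cong\ISpec(\Z)_{\neq(0)}$.

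The step I expect to need the most care is checking the hypothesis of Theorem~\ref{thm:MoerdijkSTABILITY} that $N$ is an \emph{open} localic groupoid, i.e.\ that the source and target maps $\M,\pi\colon(0,\infty)\times(0,\infty)\to(0,\infty)$ are open.
\begin{itemize}
\item For $\pi$, it is a product projection. Product projections $Y\times Z\to Z$ are open whenever $Y$ is overt, and $(0,\infty)$ is overt by the argument in the proof of Theorem~\ref{thm:DESCENTsinglePRIME}.
\item For $\M$, I would factor it as $\pi\circ\sigma$, where $\sigma(\alpha,\beta)=(\alpha,\alpha\beta)$ is a homeomorphism. Its inverse is $(\alpha,\gamma)\mapsto(\alpha,\gamma\alpha^{-1})$, which is geometric because multiplication and inversion on positive Dedekind reals are. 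Hence $\M$ is also open.
\end{itemize}

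A secondary point is to justify that the internal groupoid obtained by "fixing $\frap$" in Section~\ref{subsec:localNA} really is $\Phi^\#(N)$, the constant family. By Observation~\ref{obs:stillnonARCH}(ii), for each fixed $p$ the space $[av_{NA};p]$ is identified with $(0,\infty)$ via $\beta\mapsto|\cdot|_p^\beta$. Since $p$ is recovered geometrically from $\frap$ by \cite[Lemma 1.14]{NVOstrowski}, this identification is uniform over $\ISpec(\Z)_{\neq(0)}$. Under it, the exponentiation action becomes multiplication and no longer depends on $p$. So over $\ISpec(\Z)_{\neq(0)}$ the family of action groupoids is exactly the pullback of $N$ along $\Phi$, as in Convention~\ref{conv:fixingx}. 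If one wanted to avoid this identification, one could instead note that the internal version of Theorem~\ref{thm:TRIVIALfreeTRANSaction}, applied over $\baseS(\ISpec(\Z)_{\neq(0)})$, directly gives that the descent topos is the base topos itself. The needed hypotheses are the free transitive action and the fact that $\Phi^\#((0,\infty))\to\ISpec(\Z)_{\neq(0)}$ is an open surjection, and both are stable under pullback. Either route reaches the same conclusion, and I would present the stability-theorem route as the main argument.
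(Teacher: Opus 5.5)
Your proposal is correct and follows essentially the same route as the paper. It applies Moerdijk's Stability Theorem to the open groupoid $N$ along the terminal map $\Phi$, then collapses the factor using $\mathrm{BN}\simeq\Set$ from Theorem~\ref{thm:DESCENTsinglePRIME}. The only difference is cosmetic, in the openness check. You use the shear isomorphism $(\alpha,\beta)\mapsto(\alpha,\alpha\beta)$ and a product projection, whereas the paper composes the \v{C}ech projections with the free-transitive isomorphism $\langle\pi,\M\rangle$; both arguments rest on the overtness of $(0,\infty)$ and the pullback-stability of open maps.
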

\begin{proof} We divide the argument into two stages.
	
\subsubsection*{Step 1: Open Groupoid} We claim that the action groupoid $N$ of Construction~\ref{cons:groupoidG} is an open groupoid, i.e. its source and target maps are open. Consider the \v{C}ech groupoid of $\rho\colon (0,\infty)\to\{\ast\}$, as in Diagram~\eqref{eq:KERNELpairGROUPOID}. Its source and target maps are the two projections, arising from the pullback
\begin{equation}
\begin{tikzcd}
(0,\infty)\times (0,\infty) \ar[r,"\pi_1"] \ar[d,swap,"\pi_0"] & (0,\infty) \ar[d,"\rho"]\\
(0,\infty) \ar[r,"\rho"]& \{*\} 
\end{tikzcd}\,.
\end{equation}	
Since $\rho$ is open and open maps are stable under pullback \cite[\S V.4]{JoyalTierney}, deduce that $\pi_0,\pi_1$ must also be open. 

Next, by the free-and-transitive condition established by Theorem~\ref{thm:DESCENTsinglePRIME}, the map
$$\lranglet{\pi}{\M}\colon (0,\infty)\times(0,\infty)
\longrightarrow
(0,\infty)\times(0,\infty)$$
is an isomorphism. Moreover, examining Diagram~\eqref{eq:Step2Diagram}, this isomorphism satisfies the identities
$$\pi=\pi_0\circ \lranglet{\pi}{\M}  \qquad \M = \pi_1\circ  \lranglet{\pi}{\M}.$$
Since isomorphisms are open, and compositions of open maps are open, it follows that $\pi,\M$ are also open -- which are precisely the source and target maps of the action groupoid $N$.
	
\subsubsection*{Step 2: Base-change} Regard $N$ as a localic groupoid internal to the topos $\Set$. By Step 1, $N$ is an open localic groupoid. Hence, we can apply Moerdijk's Stability Theorem~\ref{thm:MoerdijkSTABILITY} to obtain the equivalence
$$	\B(\baseS(\ISpec(\mathbb{Z})_{\neq (0)}), \Phi^\#(N))\simeq \baseS(\ISpec(\mathbb{Z})_{\neq (0)})\times_{\Set} \B(\Set,N)\,.$$
	By Theorem~\ref{thm:DESCENTsinglePRIME}	, we know that $\B(\Set,N)\simeq \Set$. In the language of point-free space, this yields
$$	[\places_{NA}]\cong \ISpec(\Z)_{\neq (0)} \times \{\ast\} \cong\ISpec(\Z)_{\neq (0)}\,.$$
\end{proof}


\section{The Archimedean Place}\label{sec:Arch}

This section characterises the Archimedean place. Per the definitions of \cite{NVOstrowski}, an Archimedean absolute value is one which is {\em not} non-Archimedean; this technically includes the trivial absolute value. Nonetheless, we exclude the trivial absolute value by fixing the following hypothesis:

\begin{hypothesis} Unless stated otherwise, an ``Archimedean absolute value'' means a {\em non-trivial} Archimedean absolute value, i.e. an absolute value $|\cdot|$  such that  $|n|>1$ for some non-zero integer $n$. Denote $[av_A]$
as the space of non-trivial Archimedean absolute values. 	
\end{hypothesis}

The prototypical example of an Archimedean absolute value is the {Euclidean absolute value} $|\cdot|_\infty$, defined
$$|n|_\infty:= n\ \qquad\text{for all}\, n\in\N\,.$$
In fact, by Theorem~\ref{thm:ostrowskiQ}, we know that $$[av_A]\cong (0,1],$$
with $\beta\in(0,1]$ corresponding to the absolute value
$$|\cdot|=|\cdot|_{\infty}^\beta\,.$$
Thus exponentiation acts on the parameter $\beta$ by multiplication:
$$|\cdot|^\alpha=\left(|\cdot|_{\infty}^\beta\right)^\alpha=|\cdot|_{\infty}^{\alpha\cdot\beta}\,.$$
Hence, playing the same game as in the non-Archimedean case, we reformulate the algebraic action of exponentiation on $[av_A]$ as below.

\begin{construction}\label{cons:ArchLAXtopos} Define the following diagram in $\Loc$.
	
	\begin{equation}\label{eq:ArchLAXtopos}
	\begin{tikzcd} \Big((0,1]\times (0,1] \Big) \times (0,1]\ar[r, bend right, shift right=1ex, swap, "\pi_{12}"] \ar[r,swap,"\m\times \id"] \ar[r,bend left, shift left=1ex,  "\pi_{02}"]& (0,1]\times (0,1] \ar[r, bend right,shift right=1ex, swap, "\M"] \ar[r, bend left, shift left=1ex, "\pi"] & (0,1] \ar[l, "s"] \end{tikzcd}
	\end{equation}
	where the maps are defined analogously to those in Construction~\ref{cons:groupoidG}, with
$$	\pi(\alpha,\beta)=\beta,
\qquad
\M(\alpha,\beta)=\alpha\beta.$$

\end{construction}

The set-up is analogous to the non-Archimedean case, with one crucial difference: the action is not invertible. Indeed, the action cannot be extended to $\alpha>1$, since already for the Euclidean absolute value 
$$|1+1|_{\infty}^\alpha=2^\alpha>2=|1|_{\infty}^\alpha+|1|_{\infty}^\alpha.$$
In other words, $|\cdot|_\infty^\alpha$ fails the triangle inequality for every $\alpha>1$, and thus no longer defines an absolute value.

This technical distinction matters. As pointed out in Discussion~\ref{dis:GroupoidReflection}, the standard descent construction freely inverts the arrows of the groupoid, whereas here we wish to retain the non-invertibility of the $(0,1]$-action. We are thus led to use lax descent.

\begin{definition}[Archimedean Place]\label{def:LAXdescent} 
Define $\D'$ to be the lax descent category of Diagram~\eqref{eq:ArchLAXtopos}. Call $\D'$ the \emph{topos of the Archimedean place}. Denote $[\thT_{\D'}]$ to be the corresponding space of $\D'$.
\end{definition}

As in the non-Archimedean case, the diagram suggests regarding $[\thT_{\D'}]$ as a quotient of $(0,1]$ by the exponentiation action. It is thus natural to wonder if we get the same result as before, i.e. if $[\thT_{\D'}]$ also corresponds to the singleton space $\{*\}$. It does not. In fact, we get the following surprising result.

\begin{theorem:ARCHPRIME} $\D'\simeq\baseS\overleftarrow{[0,1]}$, or equivalently, $[\thT_{\D'}]\cong\overleftarrow{[0,1]}$.
\end{theorem:ARCHPRIME}

Aside from the obvious difference with the non-Archimedean case, why else might Theorem~\ref{thm:ARCHIMEDEANPLACE} be surprising? One answer would be the number-theoretic implications, which we postpone till Section~\ref{sec:strangewoods} for proper discussion. But even setting aside the number theory, Theorem~\ref{thm:ARCHIMEDEANPLACE} highlights some very interesting interactions between topology and algebra, made visible through the lax descent.



\begin{enumerate}[label=(\alph*)]
	\item The appearance of the upper reals is unexpected.\footnote{Although, in hindsight, perhaps less surprising once we step away from classical number theory and examine the lax descent construction by itself: the quotient converts actions by the monoid into 2-cells, which introduces the one-sidedness.} Informally, Theorem~\ref{thm:ARCHIMEDEANPLACE} says that if we quotient the real interval $(0,1]$ by the multiplicative action of the monoid $(0,1]$, then we kill off all the left Dedekind sections of the reals in $(0,1]$ --- something which is \textit{a priori} not obvious.
	\item Although we were careful to exclude the trivial absolute value --- note that we work with $(0,1]$ instead of the closed interval $[0,1]$ --- the resulting quotient space nevertheless extends to $\overleftarrow{[0,1]}$. This suggests that the (non-trivial) Archimedean place and trivial place cannot be definably separated.\footnote{Why? Recall that subspaces of upper reals must be closed under directed joins along the specialisation order (Section~\ref{subsec:localicreals}).} This raises interesting questions on how we should understand the generic Archimedean completion, especially since $\mathbb{Q}$ and $\mathbb{R}$ are clearly not homeomorphic.
\end{enumerate}

We prove Theorem~\ref{thm:ARCHIMEDEANPLACE} by constructing two mutually inverse functors
\begin{equation*}\label{eq:LAXinverseFUNCTORS}
\begin{tikzcd}
\J\colon\D'
\arrow[r, bend left=35] &
\baseS\overleftarrow{\text{$[0,1]$}}\colon\K
\arrow[l, bend left=35]
\end{tikzcd}\,.
\end{equation*}
Unlike the non-Archimedean case, whose analysis could be streamlined by powerful descent theorems, here we give an explicit analysis of which sheaves of $\baseS(0,1]$ are selected by the lax descent topos $\D'$. Although the details are technical, the analysis is (implicitly) guided an important distinction regarding the Archimedean vs. non-Archimedean case: \emph{$\D'$ witnesses non-trivial forking in the connected components of its sheaves whereas $\D$ does not}. Many of the arguments below can be understood as adjusting for this difference. We defer further discussion of this forking phenomena to Section~\ref{sec:ARCHvsNONARCH}; for now, we focus on establishing the proof of Theorem~\ref{thm:ARCHIMEDEANPLACE}.

\subsection{First Direction}\label{subsec:ElephantstDIRECTION} Here we construct the functor
\begin{align}
\J\colon \D' \longrightarrow & \, \baseS\overleftarrow{[0,1]}\\
\quad (F,\theta)  \mapsto & \quad  ? \qquad\nonumber 
\end{align}
by giving a concrete description of its action on objects. Fix an object $(F,\theta)\in\D'$. By Construction~\ref{cons:LAXdescent}, $F$ is a sheaf in the topos $\baseS(0,1]$ and $\theta$ is its descent data. As preparation for later analysis, it will be useful to replace this abstract description by more explicit presentations of both $F$ and $\theta$. 

\subsubsection*{Presentation of the sheaf.} By Fact~\ref{fact:sheaves},  $F$ may be equivalently characterised as:
\begin{itemize}
	\item  A sheaf over $(0,1]$; 
	\item  A map $F\colon(0,1]\rightarrow [\mathbb{O}]$ to the object classifier;
	\item An \'{e}tale bundle $$f\colon Y\rightarrow (0,1],$$
with fibre $F({\gamma})=f^{-1}(\gamma)$ over each $\gamma\in (0,1]$. In particular, each fibre $F(\gamma)$ defines a set since \'{e}tale bundles are fibrewise discrete (Discusion~\ref{dis:fibrewiseDISCRETE}).
\end{itemize}
Throughout this section, we shall move freely between these different descriptions of $F$, depending on convenience.

\subsubsection*{Presentation of the descent data.} The pullback of $f$ along $\pi$ and $\M$ gives
\[\begin{tikzcd} \pi^*(Y) \ar[r,"\phi"] \ar[d,"\delta"] & Y \ar[d,"f"]\\
(0,1]\times (0,1] \ar[r, "\pi"] & (0,1]
\end{tikzcd}\quad\quad \begin{tikzcd} \M^*(Y) \ar[r,"\phi'"] \ar[d,"\delta'"] & Y \ar[d,"f "]\\
(0,1]\times (0,1] \ar[r, "\M"] & (0,1]
\end{tikzcd}\]
which we represent as
\[\pi^*(Y)= Y\times (0,1]\qquad\qquad \M^*(Y)=\{(y,\alpha,\beta) \in Y\times (0,1]\times (0,1]\big|\, f(y)=\alpha\cdot  \beta)\}\,,\]
with the evident projections $\delta,\delta'$ to the base $(0,1]\times (0,1]$. 
Correspondingly, the descent data $$\theta\colon \pi^*(Y)\rightarrow\M^*(Y)$$ defines a bundle map 
\begin{equation*} 
\begin{tikzcd} Y\times (0,1] \ar[rr,"\theta"] \ar[dr,swap, "\delta"] && \M^*(Y) \ar[dl,"\delta'"]\\
& (0,1]\times(0,1]
\end{tikzcd}\,.
\end{equation*}
In particular, $\theta$ is determined by the first coordinate: 
\begin{align}\label{eq:theta-coordinate}
\theta\colon Y\times (0,1] &\longrightarrow \M^*(Y)\\
(y,\alpha)&\longmapsto (\theta_0(y,\alpha),\alpha,f(y)) \,\,.\nonumber 
\end{align}
Here the second and third coordinates are forced by the fact that $\theta$ commutes with the projections $\delta,\delta'$ to the base space: the point $(y,\alpha)\in Y\times (0,1]$ lies over $(\alpha,f(y))$ in $(0,1]\times (0,1]$  whereas $(y,\alpha,\beta)\in\M^*(Y)$ lies over $(\alpha, \beta)$. 
\smallskip

The first coordinate map $\theta_0$ of the descent data will be important for subsequent analysis. For clarity and later quotation, we record some useful observations about its descent structure. 
 
\begin{observation}\label{obs:theta-descent-prop} Let $\theta_0$ be the first coordinate map from Equation~\eqref{eq:theta-coordinate}. \underline{Then}, for $(y,\alpha)\in Y\times (0,1]$:
	\begin{enumerate}[label=(\roman*)]
		\item \textit{(Fibre Action). } $f(\theta_0(y,\alpha))=f(y)\cdot\alpha$.
		\item \textit{(Unit Condition). } $\theta_0(y,1)=y.$
		\item \textit{(Cocycle Condition). } $\theta_0(\theta_0(y,\alpha),\alpha')=\theta_0(y,\alpha\cdot \alpha'),$ for any $\alpha,\alpha'\in (0,1]$.
	\end{enumerate}
\end{observation}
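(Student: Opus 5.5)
Each of the three properties is obtained by unpacking one piece of the lax descent structure from Construction~\ref{cons:LAXdescent} through the coordinate description~\eqref{eq:theta-coordinate}. We reason pointwise with generic points, as licensed by Definition~\ref{def:ptFREEspace}. Throughout, we identify sheaves with étale bundles via Fact~\ref{fact:sheaves}, and we identify the pullback functors $\pi^*,\M^*,s^*$, and so on, with pullback of bundles.

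For (i), we use only that $\theta$ is a map of bundles over $(0,1]\times(0,1]$, that is, $\delta'\circ\theta=\delta$. The point $(y,\alpha)$ lies over $(\alpha,f(y))$. The point $(\theta_0(y,\alpha),\alpha,f(y))\in\M^*(Y)$ satisfies the defining equation of $\M^*(Y)$, so $f(\theta_0(y,\alpha))=\M(\alpha,f(y))=\alpha\cdot f(y)$. Nothing beyond the definitions is needed here.

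For (ii), we pull $\theta$ back along the unit $s\colon(0,1]\to(0,1]\times(0,1]$, $\beta\mapsto(1,\beta)$. Since $\pi\circ s=\id=\M\circ s$, both $s^*\pi^*(Y)$ and $s^*\M^*(Y)$ are canonically $Y$. Concretely, $y\mapsto(y,1)$ and $y\mapsto(y,1,f(y))$ are the identifications. Under them, $s^*(\theta)$ is the map $y\mapsto\theta_0(y,1)$. The unit condition $s^*(\theta)\cong\id$ then reads $\theta_0(y,1)=y$. The only care required is to check that the canonical isomorphisms implicit in ``$\cong\id$'' are exactly these coordinate identifications. This is so because all maps are strict projections and products here.

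For (iii), which I expect to be the main bookkeeping obstacle, we pull back along the three face maps $\pi_{12},\m\times\id,\pi_{02}$ from $(0,1]^3$. Write a point of $(0,1]^3$ as $(\alpha',\alpha,\beta)$, with $\beta$ the base coordinate. We must first fix the convention for which face pairs with which $\widehat d_i$ and in what order the two arrows compose. This should follow Construction~\ref{cons:ArchLAXtopos}: the $\M$-composite sends $((\alpha,\beta),(\alpha',\alpha\beta))$ to $(\alpha\alpha',\beta)$. With this convention, $\widehat d_2^*(\theta)$ acts on the fibre over $(\alpha',\alpha,\beta)$ as $y\mapsto\theta_0(y,\alpha)$, landing over $\alpha\beta$. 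Next, $\widehat d_0^*(\theta)$ acts on that result with parameter $\alpha'$. Finally, $\widehat d_1^*(\theta)$ acts as $y\mapsto\theta_0(y,\alpha\alpha')$. Comparing first coordinates, the cocycle condition $\widehat d_0^*(\theta)\circ\widehat d_2^*(\theta)\cong\widehat d_1^*(\theta)$ becomes $\theta_0(\theta_0(y,\alpha),\alpha')=\theta_0(y,\alpha\alpha')$. Part (i) is used to confirm that the intermediate point lies over $\alpha\beta$, so the second application is well typed. Since multiplication on $(0,1]$ is commutative, the ordering convention affects only how the middle terms are labelled, not the final identity.
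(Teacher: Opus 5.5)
Your proposal is correct and follows essentially the same route as the paper: (i) comes from the bundle-map condition defining $\M^*(Y)$; (ii) comes from pulling $\theta$ back along $s$ and identifying $s^*\pi^*(Y)$ and $s^*\M^*(Y)$ with $Y$; and (iii) comes from comparing first coordinates of the two sides of the cocycle condition. In (iii) you track the face maps explicitly, noting that the composite sends $((\alpha,\beta),(\alpha',\alpha\beta))$ to $(\alpha\alpha',\beta)$, so one of the faces is not a literal projection; this is more careful than the paper's reference to ``the obvious projections''.
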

\begin{proof}\begin{enumerate}[label=(\roman*):] 
		\item This is immediate by construction of $\M^\ast(Y)$.
		\item The unit map is given by 
		$$s\colon  (0,1]\longrightarrow (0,1]\times (0,1]\qquad \beta\longmapsto (1,\beta)\,.$$
		Taking pullbacks once more yields the bundle map
		\begin{equation*} 
		\begin{tikzcd} s^\ast \pi^*(Y) \ar[rr,"s^\ast\theta"] \ar[dr,swap, "s^\ast \delta"] && s^\ast \M^*(Y) \ar[dl,"s^\ast\delta'"]\\
		& (0,1]
		\end{tikzcd}
		\end{equation*}
		with the bundle subspaces
		$$s^\ast \pi^\ast (Y)=Y\times \{1\}\qquad \qquad s^\ast \M^\ast(Y)=\{(y,1,f(y))\in \M^\ast(Y) \}\,.$$
		Since the unit condition from the descent data requires $s^\ast\theta=\id$, this yields $\theta_0(y,1)=y$.
		
		\item Examining Construction~\ref{cons:ArchLAXtopos}, the coycle condition requires $$(\m\times \id)^*(\theta)=\pi^*_{02}(\theta)\circ \pi^*_{12}(\theta)\,, $$
		where $\pi_{02},\pi_{12}$ are the obvious projection maps, and the multiplication map is given by
		\begin{align*}
		\m\times \id\colon (0,1]\times (0,1] \times (0,1] &\longrightarrow  (0,1]\times (0,1]\qquad (\alpha,\alpha',\beta)\longmapsto  (\alpha\cdot\alpha',\beta)\,.
		\end{align*}
In English: the RHS of the cocycle condition applies $\theta$ successively with parameters $\alpha$ and $\alpha'$, whereas the LHS applies $\theta$ once with parameter $\alpha\alpha'$. Comparing their first coordinates gives $\theta_0(\theta_0(y,\alpha),\alpha')=\theta_0(y,\alpha\cdot \alpha'),$ as required.
	\end{enumerate}
\end{proof}


\subsubsection*{Constructing $\J(F,\theta)$} We now show how to construct a new sheaf $\overline{F}\in\baseS\overleftarrow{[0,1]}$ from the original $(F,\theta)\in\D'$. First, consider the canonical map 
\begin{align}\label{eq:canonical(0,1]}
\Psi: \qint &\longrightarrow (0,1] 
\end{align} 
which sends a (discrete) point of $\qint:=\{q\in\Q  | 0<q\leq 1\}$ to its canonical representative in $(0,1]$. Regarding $F$ as an \'{e}tale bundle $f\colon Y\to(0,1]$, we can pullback $f$ along $\Psi$ to obtain:
\[\begin{tikzcd}
Z \ar[d,swap,"f_{\text{res}}"] \ar[r] & Y \ar[d,"f"]\\
\protect\mathbb{Q}_{(0,1]} 
\ar[r,"\Psi"] 
& \protect(0,1]
\end{tikzcd}\]
Alternatively, the bundle $f_{\text{res}}\colon Z\rightarrow \mathbb{Q}_{(0,1]}$ can be viewed as restricting the domain of $F\colon (0,1]\rightarrow [\mathbb{O}]$ to the map $F_{\text{res}}\colon \mathbb{Q}_{(0,1]}\rightarrow [\mathbb{O}]$.\footnote{Warning: $\mathbb{Q}_{(0,1]}$ cannot be thought of as a naive subspace of $(0,1]$ since $\mathbb{Q}_{(0,1]}$ is discrete space, unlike $(0,1]$.} In particular, if $F_{\text{res}}$ satisfies the continuity conditions of the Lifting Lemma~\ref{lem:liftingsheaves}, then Observation~\ref{obs:CHARof[0,1]SHEAVES} tells us that
it canonically defines a sheaf on $\overleftarrow{[0,1]}$. We verify this is indeed the case in the following claim.

\begin{claim}[Key Claim]\label{claim:descent2upperreals} Given $(F,\theta)\in\mathcal{D}'$, the descent data $\theta$ induces a map on fibres 
	\[\theta_{\gamma'\gamma}\colon F(\gamma')\rightarrow F(\gamma),\]
	for any $\gamma,\gamma'\in (0,1]$ such that $\gamma'\geq \gamma$, satisfying the following conditions:
	\begin{enumerate}[label=(\roman*)] 	
		\item $\theta_{\gamma\gamma}=\id$ for all $\gamma\in(0,1]$;
		\item If $\gamma,\gamma',\gamma''\in (0,1]$ such that $\gamma''\geq \gamma'\geq \gamma$, then $\theta_{\gamma''\gamma}=\theta_{\gamma'\gamma}\circ\theta_{\gamma''\gamma'}$;
		\item For any $\gamma\in(0,1]$, denote 
		\[I_\gamma:=\{q\mid \gamma<q<1\}\cup\{1\}\]
		to be its associated rounded ideal in $\RIdl(\qint,\prec)$, as defined in Example~\ref{ex:ridlreals}. 
		
		\noindent \underline{Then}, the induced map \[\theta_{\gamma}\colon\displaystyle\colim_{q\in I_\gamma}F(q)\rightarrow F(\gamma)\] 
		is an isomorphism.
	\end{enumerate}
\end{claim}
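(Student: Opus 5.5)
The plan is to define the transition maps directly from the first coordinate $\theta_0$ of the descent data, read off conditions (i) and (ii) from Observation~\ref{obs:theta-descent-prop}, and then prove the colimit condition (iii) using the étaleness of $f\colon Y\to(0,1]$.

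\textbf{Defining the maps, and (i)--(ii).} For $\gamma'\geq\gamma$ in $(0,1]$, the ratio $\alpha:=\gamma/\gamma'$ lies in $(0,1]$. I set
$$\theta_{\gamma'\gamma}(y):=\theta_0(y,\gamma/\gamma')\qquad (y\in F(\gamma')).$$
By the Fibre Action in Observation~\ref{obs:theta-descent-prop}(i), $f(\theta_0(y,\gamma/\gamma'))=\gamma'\cdot\gamma/\gamma'=\gamma$, so this is a map $F(\gamma')\to F(\gamma)$. It is constructed geometrically from $\theta_0$ and the Dedekind-real operations, so it varies continuously in $(\gamma',\gamma)$ over the closed subspace $\{\gamma'\geq\gamma\}$.
\begin{itemize}
\item Condition (i) is the Unit Condition, since $\theta_{\gamma\gamma}(y)=\theta_0(y,1)=y$.
\item Condition (ii) is the Cocycle Condition with $\alpha=\gamma'/\gamma''$ and $\alpha'=\gamma/\gamma'$, whose product is $\gamma/\gamma''$.
\end{itemize}
In particular, restricting to rationals $q'\prec q$ in $\qint$ (that is, $q'\geq q$) gives a cocycle system on $F_{\mathrm{res}}$ of the kind required by Lemma~\ref{lem:liftingsheaves}. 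So the diagram $\{F(q)\}_{q\in I_\gamma}$ is filtered and the comparison map $\theta_\gamma$ in (iii) is well defined. Note that this rests on (ii) for triples $q\geq q'\geq\gamma$.

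\textbf{Condition (iii).} This is the main obstacle. Since the colimit is filtered and the fibres are discrete sets, I would prove surjectivity and injectivity of $\theta_\gamma$ separately. The key tool is the fact that $f$ is étale: local sections exist through every point, and the diagonal is open, so two sections agreeing at a point agree on an open neighbourhood.

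\emph{Surjectivity.} Fix $y\in F(\gamma)$ and choose a local section $s$ of $f$ through $y$ over an open interval $U\ni\gamma$. Consider the map
$$\beta\longmapsto \theta_0\bigl(s(\beta),\gamma/\beta\bigr),\qquad \beta\in U,\ \beta\geq\gamma .$$
It lands in the discrete fibre $F(\gamma)$ and takes the value $y$ at $\beta=\gamma$, by (i). Since $F(\gamma)$ is discrete, the preimage of $\{y\}$ is open and contains $\gamma$. Hence there is a rational $q\in I_\gamma$ with $\theta_{q\gamma}(s(q))=y$, which gives surjectivity.

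\emph{Injectivity.} Suppose $y_1,y_2\in F(q)$ with $q\in I_\gamma$ and $\theta_{q\gamma}(y_1)=\theta_{q\gamma}(y_2)$. For $\beta\in[\gamma,q]$, the two assignments $\beta\mapsto\theta_{q\beta}(y_i)$ are partial sections of $f$ over $[\gamma,q]$, continuous by the remark above. They agree at $\beta=\gamma$. Because the diagonal $Y\to Y\times_{(0,1]}Y$ is open, they agree on a neighbourhood of $\gamma$, and in particular at some rational $q''$ with $\gamma<q''<q$. Then $y_1$ and $y_2$ are already identified in the colimit at stage $q''$. If instead the two elements live at different stages, first transport both to a common later stage using filteredness and (ii).

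The delicate part is to phrase these arguments geometrically. ``Take $y$'' and ``take $\beta$'' should be read as generic points, per Convention~\ref{conv:fixingx}, and the only topological inputs are the openness statements coming from étaleness. No excluded middle is used, since the existence of a rational between $\gamma$ and the edge of an open neighbourhood is just roundedness of the Dedekind cut.
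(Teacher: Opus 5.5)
Your argument follows essentially the paper's route. You use the same $\theta_{\gamma'\gamma}(y)=\theta_0(y,\gamma/\gamma')$, read (i)--(ii) off the unit and cocycle conditions, and split (iii) into surjectivity and injectivity via local sections of the \'{e}tale bundle. The two differences are harmless variants:
\begin{itemize}
\item For surjectivity, you use that $\{y\}$ is open in the discrete fibre $F(\gamma)$. The paper instead uses connectedness of $X=f(U)\cap[\gamma,1]$ to make $\Theta$ constant.
\item For injectivity, you use openness of the diagonal $Y\to Y\times_{(0,1]}Y$ directly. The paper instead pulls back the open embedding of a third local section $u$ through $x$ along $v$ and $v'$.
\end{itemize}

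One step fails as written. In injectivity you pass to ``some rational $q''$ with $\gamma<q''<q$''. When $q=1$, no such $q''$ need exist: take $\gamma=1$, where $I_1=\{1\}$. Constructively you also cannot decide whether $\gamma<1$, so your remark that roundedness supplies the rational does not cover this case. The repair is the paper's Case 2b, which splits on the shape of the basic neighbourhood of $\gamma$ on which the two sections agree:
\begin{itemize}
\item If it has the form $(\alpha,\beta)$, pick a rational in $(\gamma,\beta)$; it is automatically $<1$.
\item If it has the form $(\alpha,1]$, take $r=1$ and use $1\prec 1$. Agreement at $\beta=1$ then reads $\theta_{11}(y_1)=\theta_{11}(y_2)$, which is exactly what is needed.
\end{itemize}
Surjectivity has the same edge case. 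There it is handled by choosing $q=1\in I_\gamma$ whenever the neighbourhood reaches $1$, and your write-up should say so explicitly.
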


\begin{proof} Given $\gamma,\gamma'\in (0,1]$ such that $\gamma'\geq \gamma$ and $z\in F(\gamma')$, define the following function of sets
	\begin{align}\label{eq:THETAspecMORPHISM}
	\theta_{\gamma'\gamma}\colon  F(\gamma')&\longrightarrow F(\gamma)\\
	y&\longmapsto \theta_0\bigg(y,\frac{\gamma}{\gamma'}\bigg)\nonumber 
	\end{align}
	where $\theta_0\colon Y\times(0,1]\rightarrow Y$ is the first coordinate map of $\theta$ as in Observation~\ref{obs:theta-descent-prop}. Item (i) of the Observation gives the identity
		\begin{equation}
	f(\theta_0(y,\frac{\gamma}{\gamma'}))=f(y)\cdot\frac{\gamma}{\gamma'}\,\,.
	\end{equation}
In English: the multiplicative action on the base space $(0,1]$ (i.e. mapping $\gamma'\mapsto \gamma$) lifts to an action on the bundle space $Y$ (i.e. mapping $F(\gamma')\rightarrow F(\gamma)$). Notice this also ensures that $\theta_{\gamma'\gamma}$  is well-defined. 
We now check the required conditions of the Claim.	
	
	\subsubsection*{Step 1: Verifying Conditions (i) and (ii)} For Condition (i), the unit condition from Observation~\ref{obs:theta-descent-prop} yields
	$$\theta_0(y,\frac{\gamma}{\gamma})=\theta_0(y,1)=y\,,$$
	which by Equation~\eqref{eq:THETAspecMORPHISM} translates to $\theta_{\gamma\gamma}=\id\,.$
	For Condition (ii), suppose  $\gamma''\geq \gamma'\geq \gamma $ in $(0,1]$, and denote $y''\in Y$ such that $f(y'')=\gamma''$. The cocycle condition from Observation~\ref{obs:theta-descent-prop} yields $$\theta_0(y'',\frac{\gamma}{\gamma''})=\theta_0(\theta_0(y'',\frac{\gamma'}{\gamma''}),\frac{\gamma}{\gamma'}),$$ which translates to $\theta_{\gamma''\gamma}=\theta_{\gamma'\gamma}\circ\theta_{\gamma''\gamma'}$.

	\subsubsection*{Step 2: Reformulating Condition (iii)} By Discussion~\ref{dis:fibrewiseDISCRETE}, \'{e}tale bundles are fibrewise discrete, and so the fibre map $\theta_{\gamma\gamma'}$ corresponds to a map of sets. In particular. $\displaystyle\colim_{q\in I_\gamma} F(q)$ is a filtered colimit in $\Set$, and so admits the canonical description  
	\begin{equation}
	\colim_{q\in I_\gamma}F(q) = \coprod_{q\in I_{\gamma}} F(q)/\sim
	\end{equation}
	as a coproduct quotiented by the equivalence relation 
	\[(x,F(q))\sim (y,F(q')) \leftrightarrow \exists r\in I_{\gamma}. \left(q\prec r\land q'\prec r\land  \theta_{qr}(x)=\theta_{q'r}(y)\right), \]
	where ``$(x,F(q))$'' denotes $x\in F(q)$ and ``$(y,F(q'))$'' denotes $y\in F(q')$. Notice the specialisation order reverses the numerical order since $q\prec r$ iff $q>r$ or $q=r=1$ (Example~\ref{ex:ridlreals}).
	
	\smallskip
The canonical map
$$\theta_\gamma\colon
\colim_{q\in I_\gamma}F(q)\longrightarrow F(\gamma)$$
sends the class represented by $y\in F(q)$ to $\theta_{q\gamma}(y)$. Thus to verify $\theta_\gamma$ is an isomorphism of sets, it suffices to establish the implications 
	\begin{enumerate}[label=(\alph*)]
		\item $x\in F(\gamma ) \implies \exists q\in I_\gamma.\left(\exists y\in F(q)\,\land \,x=\theta_{q\gamma }(y)\right)$
		\item $y,z\in F(q),\theta_{q\gamma}(y)=\theta_{q\gamma}(z) \implies \exists r\in I_\gamma. (q\prec r \land \theta_{qr}(y)=\theta_{qr}(z))$\,.
	\end{enumerate}
For clarity: Implications (a) and (b) correspond to verifying surjectivity and injectivity of $\theta_\gamma$ respectively.
	
	
	\subsubsection*{Step 3: Surjectivity} We prove Implication (a) by exploiting the topology of the \'{e}tale bundle.
	
	\subsubsection*{Step 3a: Setup} Fix $x\in F(\gamma)$. Since $f\colon Y\rightarrow (0,1]$ is a local homeomorphism, there exists an open neighbourhood $U\subset Y$ of $x$ such that $f|_U\colon U\xrightarrow{\cong}f(U)$.  
	Let
$$	u\colon f(U)\longrightarrow U$$
	denote its inverse, so that $u(\gamma)=x$.\footnote{For details regarding local homeomorphisms in the point-free setting, see \cite[\S 5.1]{Vi07}.} Since $(0,1]$ has a basis consisting of rational-ended open intervals, we may shrink $U$ so that
$$	f(U)=(\alpha,\beta)
\qquad\text{or}\qquad
f(U)=(\alpha,1],$$
	for appropriate rationals $\alpha,\beta$ with $0\leq\alpha<\beta\leq1$.

	\subsubsection*{Step 3b: Exploiting topology and the unit condition} Denote
	\[X:=f(U)\cap [\gamma,1]\]
	with the closed interval $[\gamma,1]:=\{\gamma'\in(0,1] \,|\, \gamma\leq\gamma' \}$.\footnote{Notice this definition allows for the degenerate case $\gamma=1$, in which case $[\gamma,1]=\{1\}$.} By Step 3a, this means either
	$$	X=[\gamma,1]
	\qquad\text{or}\qquad
	X=[\gamma,\beta)\,\,\text{for some}\, \beta\in (0,1].$$
	In particular, $X$ is connected and contains $\gamma$.

Next, use the inverse $u\colon f(U)\rightarrow U$ to define the following map:
	\begin{align*}
	\Theta \colon  X& \longrightarrow F(\gamma)\\
	a&\longmapsto \theta_{a\gamma}(u(a))
	\end{align*}
Since $X$ is a connected space and $F(\gamma)$ is a discrete set, the image of $\Theta (X)$ is constant. In particular, since
	\[\Theta(\gamma)=\theta_{\gamma\gamma}(u(\gamma))=\theta_0(x,1)=x,\]
	where the final equality is by the unit condition, this implies $\Theta(a)=x$ for all $a\in X$. Finally,  notice there always exists some rational $q\in X$ such that $q\in I_\gamma$.\footnote{Why? If $X=[\gamma,\beta)$, this is obvious. If $X=[\gamma,1]$, let $q=1$. Notice this works even when $X=\{1\}$ since we allow $1\prec 1$.} Thus for $u(q)\in F(q)$, we obtain the identity $\Theta(q)=\theta_{q\gamma}(u(q))=x$, proving Implication (a).
	
	\subsubsection*{Step 4: Verifying injectivity} Fixing $q\in I_{\gamma}$, suppose $y,z\in F(q)$ such that $\theta_{q\gamma}(y)=\theta_{q\gamma}(z)$. For explicitness, denote $$x:=\theta_{q\gamma}(y)=\theta_{q\gamma}(z) \in F(\gamma).$$
Next, define two maps $$v,v'\colon (0,q]\rightarrow Y$$ whereby $v(a):=\theta_{qa}(y)$ and $v'(a):=\theta_{qa}(z)$ respectively. Notice:
	\begin{itemize}
			\item $v$ and $v'$ are partial sections of $f$ since
		\[f\circ v(a) = f\circ(\theta_{qa}(y))=a = f\circ v'(a),\qquad \text{	for any $a\in(0,q]$.}\]
		
		\item The images of $v$ and $v'$ coincide on $\gamma$, since
		\[v(\gamma)=\theta_{q\gamma}(y)=x=\theta_{q\gamma}(z)=v'(\gamma).\]
	
	\end{itemize}
	Finally, just as in Step 3, let $U\subset Y$ be an open neighbourhood of $x$ such that $f|_U\colon U\xrightarrow{\cong}f(U)$; let $u\colon f(U)\xrightarrow{\cong}U$ denote its inverse, so that $u(\gamma)=x$. 
	
	
	\subsubsection*{Step 4a: Refining the sections} Our objective here is to identify an open subspace of $(0,1]$ on which all the section maps $u,v,v'$ all agree. Since $q\in I_{\gamma}$ by assumption, we have $\gamma<q$ or $q=1$ (or both). Hence, define the subspace
	\begin{equation}
	V:=\begin{cases} f(U) \cap (0,q), \qquad \text{if $\gamma<q$}\\
	f(U), \qquad\quad\,\,\qquad\text{if $q=1$}
	\end{cases}
	\end{equation}
 In either case, $V$ is an open neighbourhood of $\gamma$ in $(0,1]$. Moreover, by design, both $u$ and $v$ are well-defined on the whole of $V$. Hence, their restrictions
$$ u,v\colon V\longrightarrow Y$$
yield sections of $f$ satisfying the identity
\begin{equation}\label{eq:u=v}
 u(\gamma)=v(\gamma)=x.
\end{equation}

Next, construct the obvious pullback:
	\begin{equation}
	\begin{tikzcd}
	V_u \ar[r] \ar[d,swap,"v_u"] \ar[dr, phantom, "\usebox\pullback", very near start, color=black] & V \ar[d,hook,"u"]\\
	V \ar[r,"v"] & Y
	\end{tikzcd}
	\end{equation}
Since $u\colon V\to Y$ is a homeomorphism onto the open subspace $u(V)\subseteq Y$, and open inclusions are preserved by pullback, the map $V_u\hookrightarrow V$ is an open inclusion. In other words, $V_u$ is the open subspace of $V$ on which $v$ and $u$ agree. In particular, this means that $\gamma\in V_u$ by Equation~\eqref{eq:u=v}.	

	Apply the same construction to obtain an open subspace $V'_{u}$ of $V$ on which $u$ and $v'$ agree (and thus, also $\gamma\in V'_{u}$). Repeat one last time to obtain an open subspace
	$$P:=V_u\cap V'_{u}$$
on which $u,v,v'$ all agree, and also $\gamma\in P$. Shrinking $P$ if necessary, we may assume that $P$ is an open interval of the form $(\alpha,\beta)$ or $(\alpha,1]$ with rationals $\alpha,\beta\in\Q$ such that $0\leq \alpha<\beta\leq q$.
	
	\subsubsection*{Step 4b: Finish} It remains to show that there exists $r\in P\cap I_{\gamma}$ such that $q\prec r$. Since $q\in I_\gamma$, either $\gamma<q$ or $q=1$. We therefore check the following cases.
	\begin{itemize}
		\item \underline{\textbf{Case 1:} $\gamma<q$}. Since $P$ is an open neighbourhood of $\gamma$, pick a rational	$r\in P$ such that $\gamma<r<q.$ 
		
		\smallskip 
		\item \underline{\textbf{Case 2a:} $q=1$ and $P=(\alpha,\beta)$}. Pick a rational $r$ where $\alpha<\gamma<r<\beta\leq q$. 
		
		\smallskip
		\item \underline{\textbf{Case 2b:} $q=1$ and $P=(\alpha,1]$}. Pick $r=1=q$. Here, $r\in I_\gamma$ and $q\prec r$ since the specialisation order allows for $1\prec1$ (Example~\ref{ex:ridlreals}).\footnote{Notice this covers the edge case $\gamma=1$.}
	\end{itemize}
In every case, we found a rational $r\in P\cap I_{\gamma}$ such that $q\prec r$. Since $q\prec r$, the map $\theta_{qr}$ is well-defined.  Moreover, since $u=v=v'$ on $P$ by Step 4a's construction, we obtain the desired identity $$\theta_{qr}(y)=v(r)=u(r)=v'(r)=\theta_{qr}(z),$$
thus proving Implication (b).	
\end{proof}

As an immediate corollary of  Claim~\ref{claim:descent2upperreals} and the Lifting Lemma~\ref{lem:liftingsheaves}, we get:

\begin{corollary}\label{cor:GETSHEAFoverUPPER} Any $(F,\theta)\in\D'$ determines a sheaf $\overline{F}\in\baseS\overleftarrow{[0,1]}$ by its restriction to the rationals.
\end{corollary}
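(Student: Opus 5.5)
The plan is to combine Claim~\ref{claim:descent2upperreals} with Observation~\ref{obs:CHARof[0,1]SHEAVES}(iii), which says that a sheaf on $\overleftarrow{[0,1]}$ is the same thing as a map $\qint\to[\mathbb{O}]$ satisfying the continuity conditions of the Lifting Lemma~\ref{lem:liftingsheaves}. Concretely, starting from $(F,\theta)\in\D'$, I take the restricted bundle $F_{\text{res}}\colon\qint\to[\mathbb{O}]$, i.e.\ the pullback $f_{\text{res}}\colon Z\to\qint$ of $f$ along $\Psi$, so that $F_{\text{res}}(q)=F(q)$ for each rational $q\in(0,1]$. For the transition maps I use, for $q'\prec q$ in $(\qint,\prec)$, the maps $\theta_{q'q}\colon F(q')\to F(q)$ of Claim~\ref{claim:descent2upperreals}. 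The order conventions line up: $q'\prec q$ means $q'>q$ or $q'=q=1$, which is exactly the hypothesis $q'\geq q$ of the Claim. Because $\qint$ is discrete, these maps of sets (fibres of an étale bundle are discrete, Discussion~\ref{dis:fibrewiseDISCRETE}) are automatically specialisation morphisms between points $F(q')$, $F(q)$ of $[\mathbb{O}]$.

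Next I check the two continuity conditions of the Lifting Lemma. The cocycle condition $\theta_{q''q}=\theta_{q'q}\circ\theta_{q''q'}$ for $q''\prec q'\prec q$ is Claim~\ref{claim:descent2upperreals}(ii) specialised to rationals. The one degenerate case is $1\prec1\prec1$, which reduces to $\theta_{11}=\id$ by (i). For the colimit condition, note that for rational $q$ the rounded ideal $I_q=\{q'\mid q'\prec q\}$ of Example~\ref{ex:ridlreals} coincides with the ideal $I_\gamma$ of Claim~\ref{claim:descent2upperreals}(iii) at $\gamma=q$: both equal $\{q'\mid q<q'<1\}\cup\{1\}$. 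So (iii) says precisely that $\colim_{q'\prec q}F(q')\to F(q)$ is an isomorphism. Observation~\ref{obs:CHARof[0,1]SHEAVES} then produces $\overline{F}\colon\overleftarrow{[0,1]}\to[\mathbb{O}]$, i.e.\ $\overline F(I)=\colim_{q\in I}F(q)$, which is a sheaf on $\overleftarrow{[0,1]}$.

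The only step I expect to need care is the bookkeeping above: matching the ideal $I_\gamma$ for rational $\gamma$ with $I_q$ in the $R$-structure, and handling the edge case $q=1$, where $1\prec1$. Both points are already built into the statement of the Claim, so no new analytic work is needed. Everything else is routine once the equivalence (ii)$\iff$(iii) of Observation~\ref{obs:CHARof[0,1]SHEAVES} is invoked. I also expect functoriality in morphisms of $\D'$ to follow in the same way, since a morphism $u$ compatible with $\theta$ commutes with each $\theta_{q'q}$ and so induces a map of filtered colimits.
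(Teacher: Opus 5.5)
Your proposal is correct and follows the paper's proof: restrict $F$ along $\Psi$ to $\qint$, obtain the cocycle and colimit conditions of the Lifting Lemma from Claim~\ref{claim:descent2upperreals}(ii)--(iii), and conclude via Observation~\ref{obs:CHARof[0,1]SHEAVES}(iii). Your extra bookkeeping (identifying $I_q$ with $I_\gamma$ at rational $\gamma$, and the $q=1$ edge case) makes explicit what the paper leaves implicit; one small correction is that $q'\prec q$ implies $q'\geq q$ rather than being equivalent to it, and that implication is all you need.
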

\begin{proof} Represent $F\in\baseS(0,1]$ as a map $F\colon (0,1]\rightarrow [\mathbb{O}]$ to the object classifier, and consider its restriction $F_{\text{res}}\colon\mathbb{Q}_{(0,1]}\rightarrow [\mathbb{O}]$. By Claim~\ref{claim:descent2upperreals}, $F_{\text{res}}$ satisfies the continuity conditions required by item (iii) of Observation~\ref{obs:CHARof[0,1]SHEAVES}. The Lifting Lemma~\ref{lem:liftingsheaves} therefore extends $F_{\mathrm{res}}$ to a sheaf $\overline{F}$ on $\overleftarrow{[0,1]}$.
\end{proof}

\begin{discussion}[Choice vs. Existential Quantifiers] We briefly clarify the use of ``choice'' and ``pick'' in the preceding arguments. In Steps 3 and 4, we argued: given a point $x\in F(\gamma)$, since $f\colon Y\to (0,1]$ is a local homeomorphism, there exists an open $U\subset Y$ such that $x\in U\cong f(U)$. Choosing such a $U$ does not invoke the Axiom of Choice. Constructively, once the existence of an object satisfying a given property has been established, then invoking this object is simply the elimination of an existential quantifier; for more context, see \cite[\S 1.3]{BauerConstructive}. In our setting, the required existence follows constructively from the fact that all \'{e}tale maps $f\colon Y\to X$ yield a basis of opens admitting unique sections; see \cite[Proposition 1.49]{Vi07}. 
	
	
\end{discussion}

\subsubsection*{The Main Construction} We can now define our functor $\mathfrak{J}$. On the level of objects, we map:
\begin{align*}
\mathfrak{J}\colon \mathcal{D}' \longrightarrow & \,\mathcal{S}\overleftarrow{[0,1]}\\
\quad (F,\theta)  \mapsto & \quad  \overline{F}
\end{align*}
where $\overline{F}$ is the sheaf over $\overleftarrow{[0,1]}$ associated to $(F,\theta)$ by Corollary~\ref{cor:GETSHEAFoverUPPER}, via the Lifting Lemma~\ref{lem:liftingsheaves}. Explicitly, for an upper real $I_{\gamma}\in \RIdl (\qint,\prec)$, we define 
$$\overline{F}(I_{\gamma}):=\colim_{q\in I_{\gamma}}F(q)\,.$$

The same procedure can be carried out on the level of morphisms. Consider a $\calD'$-morphism
$$
u\colon(F,\theta)\longrightarrow(G,\xi)\,\,.
$$
This, of course, defines a natural transformation of sheaves, but let us reformulate the details within point-free topology. Representing $F,G\colon (0,1]\rightarrow[\mathbb{O}]$ as maps to the object classifier, $u$ gives pointwise, specialisation morphisms
$$
u_q\colon F(q)\longrightarrow G(q)
$$
compatible with the transition maps
$$
\begin{tikzcd}
F(q') \ar[r,"u_{q'}"] \ar[d,swap,"\theta_{q'q}"]&
G(q') \ar[d,"\xi_{q'q}"]\\
F(q) \ar[r,swap,"u_q"]&
G(q).
\end{tikzcd}\,.
$$
Thus, for each $I_\gamma\in\RIdl(\qint,\prec)$, the maps $u_q$ form a cocone from the filtered diagram
$$
\{F(q')\xrightarrow{\theta_{q'q}}F(q)\}_{q,q'\in I_\gamma}
$$
to $\overline G(I_\gamma)=\colim_{q\in I_\gamma}G(q)$. By the universal property of colimits, they induce a morphism
$$
\mathfrak J(u)(I_\gamma)\colon
\overline F(I_\gamma)
\longrightarrow
\overline G(I_\gamma).
$$
These morphisms are natural in $I_\gamma$, so define
$$
\mathfrak J(u)\colon\overline F\longrightarrow\overline G.
$$
Identity and composition are preserved by the universal property of the colimits, so $\mathfrak J$ is a functor.

\subsection{Second Direction}\label{subsec:K2ndDIRECTION} We now construct a functor inverse to $\J$, i.e. 
\begin{align}
\mathfrak{K}\colon \mathcal{S}\overleftarrow{[0,1]} \longrightarrow & \,\mathcal{D}'\\
F \mapsto & \,\,  ? \,\,\,\,\,\,.\nonumber 
\end{align}

Start with the diagram of spaces:
\[ \begin{tikzcd}
\protect(0,1]\times \protect(0,1] \ar[r,shift left=.75ex,"\pi"]
\ar[r,shift right=.75ex,swap,"\M"] & \protect(0,1]\ar[r,"r"] & \overleftarrow{\protect[0,1]}\end{tikzcd}\]
where the map
\begin{align}\label{eq:r-map}
r\colon (0,1]&\longrightarrow\overleftarrow{[0,1]}\\
\gamma&\longmapsto I_{\gamma}:=\{q\mid \gamma<q\} \nonumber
\end{align}
 sends a Dedekind $\gamma$ to its right Dedekind section. On the level of sheaves, the arrows reverse and the associated inverse image functors yield the following diagram
\begin{equation}\label{eq:KsetupDIAGRAM}
\begin{tikzcd}
\mathcal{S}(0,1]\times\baseS(0,1] & \mathcal{S}\protect(0,1] \ar[l,shift right=.75ex,swap,"\pi^*"]
\ar[l,shift left =.75ex,"\M^*"] & \mathcal{S}\protect\overleftarrow{[0,1]} \ar[l,swap, "r^*"].\end{tikzcd}
\end{equation}
That is, given a sheaf $F$ on $\overleftarrow{[0,1]}$, the above functor $r^*$ sends it to a sheaf 
\begin{equation}\label{eq:rF}
\widehat{F}:=r^*(F)
\end{equation}
 over $(0,1]$,  essentially for free. The non-trivial task then is to equip  $\widehat{F}$ with descent data
$$\hat{\theta}\colon \pi^*(\widehat{F})\to \M^*(\widehat{F})\,.$$
Our guiding observation is that the descent data is naturally induced by the specialisation order between points of the upper reals.\footnote{The attentive reader may notice that this is broadly analogous to the proof of the Lifting Lemma~\ref{lem:liftingsheaves}.} We now develop this remark. 

\begin{construction}[Descent Data]\label{cons:inverse-setup} Let $F\in\baseS\overleftarrow{[0,1]}$ be a sheaf, and write $\widehat{F}:=r^*(F)$ as above. Invoking Fact~\ref{fact:sheaves} once more, represent both sheaves as the below maps to the object classifier
	$$F\colon \overleftarrow{[0,1]}\to \OC \qquad\text{and}\qquad  \widehat{F}\colon (0,1]\to \OC\,.$$
We construct the descent data in stages.
\begin{enumerate}
	\item Fix $\alpha,\beta\in (0,1].$ Since $\pi(\alpha,\beta)=\beta$ and $\M(\alpha,\beta)=\alpha\cdot \beta$, we obtain the expected characterisation of the respective pullbacks:
	\begin{itemize}
		\item $\pi^*(\widehat{F})\colon  (0,1]\times(0,1]\rightarrow [\mathbb{O}]$ is a map that sends $(\alpha,\beta)\mapsto \widehat{F}(\beta)$;
		\item $\M^*(\widehat{F})\colon  (0,1]\times(0,1]\rightarrow [\mathbb{O}]$ is a map that sends $(\alpha,\beta)\mapsto \widehat{F}(\alpha\cdot\beta)$.
	\end{itemize}		
\item Recall from Definition~\ref{def:ptFREEspace} that any map between point-free spaces is functorial with respect to its points. Thus, for any $\gamma,\gamma',\in(0,1]$ such that 
$$I_{\gamma'}\sqsubseteq I_{\gamma} \qquad\text{in $\overleftarrow{[0,1]}$}\,$$ 
the map $F$ sends this specialisation order between upper reals to a morphism
\[F(I_{\gamma'})\xrightarrow{s_{\gamma' \gamma}} F(I_{\gamma}).\]
\item Fix $\alpha,\beta\in(0,1]$. Clearly, we have
$$\alpha\cdot\beta\leq\beta,
\qquad\text{and hence}\qquad
I_\beta\sqsubseteq I_{\alpha\cdot\beta}.$$
By Step (2), this gives the morphism  
\begin{equation}\label{eq:specMORPH1} s_{\beta(\alpha\cdot\beta)}\colon F(I_\beta)\longrightarrow F(I_{\alpha\cdot\beta}).
\end{equation}
Under the identification $\widehat{F}=r^* F$, this yields the morphism
\[s_{\beta(\alpha\cdot\beta)}\colon \widehat{F}(\beta)\longrightarrow \widehat{F}(\alpha\cdot\beta)\]
By Step~(1), these components determine a
$\baseS(0,1]\times\baseS(0,1]$-morphism
\[\widehat{\theta}\colon \pi^*(\widehat{F})\longrightarrow \M^*(\widehat{F}).\]
The unit and cocycle conditions follow formally from the functoriality of $F$, since the identity and composition laws for the induced morphisms correspond respectively to the unital and associativity laws of multiplication
$$1\cdot\beta=\beta
\qquad\text{and}\qquad
\alpha\cdot (\alpha'\beta)=(\alpha\alpha')\cdot \beta.$$
\end{enumerate}

\end{construction}

We now define our functor $\K$. On the level of objects, we map:
\begin{align*}
\mathfrak{K}\colon \mathcal{S}\overleftarrow{[0,1]} \longrightarrow & \,\mathcal{D}'\\
F \longmapsto & \,(\widehat{F},\widehat{\theta})
\end{align*}
where $\widehat{F}:=r^\ast(F)$ as in Equation~\eqref{eq:rF}, and $\widehat{\theta}$ as in Construction~\ref{cons:inverse-setup}. This is well-defined, since $\widehat{F}$ is a sheaf over $(0,1]$ and $\widehat{\theta}$ satisfies the required descent conditions.

On the level of morphsims: for any $\baseS\overleftarrow{[0,1]}$-morphism $u\colon F\rightarrow G$ where $F,G\colon\overleftarrow{[0,1]}\rightarrow [\mathbb{O}]$, define 
\[\K(u):= r^*(u)\colon \widehat{F}\rightarrow\widehat{G}.\]
with $r^\ast$ as in Diagram~\eqref{eq:KsetupDIAGRAM}. This defines an $\baseS\overleftarrow{[0,1]}$-morphism functorially, so it remains to check that $r^\ast (u)$ is also a $\D'$-morphism. Explicitly, this means: given $(\widehat{F},\widehat{\theta}_F), (\widehat{G},\widehat{\theta}_G)\in \mathcal{D}'$ with the descent data $\widehat{\theta}_F,\widehat{\theta}_G$ from Construction~\ref{cons:inverse-setup}, the following identity holds:
\begin{equation}\label{eq:D'morphism}
\M^*(\K(u))\circ \widehat{\theta}_F =\widehat{\theta}_{G}\circ \pi^*(\mathfrak{K}(u)).
\end{equation}
To see this, it is helpful to work pointwise. Equation~\eqref{eq:D'morphism} says that the following square commutes
\begin{equation}\label{eq:D'-2}
\begin{tikzcd}
\widehat{F}(\beta) \ar[r,"\widehat{\theta}_F"] \ar[d,swap, "\mathfrak{K}(u)"] & \widehat{F}(\alpha\cdot \beta) \ar[d, "\mathfrak{K}(u)"] \\
\widehat{G}(\beta) \ar[r,"\widehat{\theta}_G"] & \widehat{G}(\alpha\cdot \beta)
\end{tikzcd}
\end{equation}
for a generic point $(\alpha,\beta)\in (0,1]\times (0,1]$. By Construction~\ref{cons:stdDESCENT}, the horizontal maps of Diagram~\eqref{eq:D'-2} are obtained by applying $F$ and $G$, respectively, to the specialisation order
$$I_\beta\sqsubseteq I_{\alpha\beta}\,,$$
before restricting along $r$. As such, commutativity of Diagram~\eqref{eq:D'-2} follows from the naturality square \[\begin{tikzcd} F(I_{\gamma'}) \ar[r,"s_{\gamma'\gamma}"] \ar[d,swap, "u"] & F(I_\gamma) \ar[d, "u"] \\ G(I_{\gamma'}) \ar[r,"t_{\gamma'\gamma}"] & G(I_\gamma) \end{tikzcd}\,\,.\]
for $I_{\gamma'}\sqsubseteq I_\gamma$. In particular, taking $\gamma'=\beta$ and $\gamma=\alpha\cdot \beta$ gives the commutativity of Diagram~\eqref{eq:D'-2}.

%



\subsection{Assemble and Finish} With the key constructions completed, all the gears line up and we now prove the main result of the section. 

\begin{theorem}\label{thm:ARCHIMEDEANPLACE} $\D'\simeq\baseS\overleftarrow{[0,1]}$. 	
\end{theorem}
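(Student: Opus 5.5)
The plan is to show that the functors $\J\colon\D'\to\baseS\overleftarrow{[0,1]}$ of Section~\ref{subsec:ElephantstDIRECTION} and $\K\colon\baseS\overleftarrow{[0,1]}\to\D'$ of Section~\ref{subsec:K2ndDIRECTION} are mutually inverse up to natural isomorphism. Both functors are already defined on objects and morphisms, so what remains is to exhibit natural isomorphisms $\J\K\cong\id$ and $\K\J\cong\id$. Throughout, sheaves are represented as maps to $\OC$, using Fact~\ref{fact:sheaves} and Observation~\ref{obs:CHARof[0,1]SHEAVES}.

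For $\J\K\cong\id$, start with $F\colon\overleftarrow{[0,1]}\to\OC$ and put $\widehat F=r^*F=F\circ r$. Restricting along $\Psi\colon\qint\to(0,1]$ gives $\widehat F_{\mathrm{res}}(q)=F(r(q))$. The first thing to check is that $r\circ\Psi$ agrees with the canonical map $\psi\colon\qint\to\RIdl(\qint,\prec)$ of Lemma~\ref{lem:EPIridl}, i.e.\ $r(q)=I_q$. Next I would check that the transition maps $\theta_{q'q}=\widehat\theta$ evaluated at $(q/q',q')$ coincide with the specialisation morphisms $s_{q'q}$ of $F$. This holds by Construction~\ref{cons:inverse-setup}, since $I_{q'}\sqsubseteq I_{q}$ is exactly $q\le q'$. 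Hence $\J\K F$ is the map $\overline{f}$ built in Step~1 of the Lifting Lemma~\ref{lem:liftingsheaves} from $F\circ\psi$ with these transition maps. Step~2 of that proof gives $\overline f\cong F$, and naturality in $F$ follows from the universal property of the colimits.

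For $\K\J\cong\id$, start with $(F,\theta)\in\D'$ and put $\overline F=\J(F,\theta)$. I then need an isomorphism $r^*\overline F\cong F$ in $\baseS(0,1]$ that carries $\widehat\theta$ to $\theta$. Pointwise, at generic $\gamma\in(0,1]$, we have $r^*\overline F(\gamma)=\overline F(I_\gamma)=\colim_{q\in I_\gamma}F(q)$. Condition (iii) of Claim~\ref{claim:descent2upperreals} says the induced map $\theta_\gamma$ from this colimit to $F(\gamma)$ is an isomorphism. Because the claim was proved for a generic Dedekind $\gamma$ and geometrically, these components assemble into a map of sheaves over $(0,1]$: the map $\theta_\gamma$ is built from $\theta_0$ and the étale structure, so it is a geometric construction in $\gamma$. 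Compatibility with descent data amounts to the square $\theta_{\beta,\alpha\beta}\circ\theta_\beta=\theta_{\alpha\beta}\circ s_{\beta,\alpha\beta}$. To verify it, I would compare both sides on a representative $y\in F(q)$ with $q\in I_\beta$: the left side gives $\theta_{\beta,\alpha\beta}\theta_{q\beta}(y)$ and the right side gives $\theta_{q,\alpha\beta}(y)$. These agree by the cocycle condition (ii) of Claim~\ref{claim:descent2upperreals}. Naturality in morphisms of $\D'$ follows from the compatibility squares for $u$.

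The main obstacle is the step just described: promoting the fibrewise bijections $\theta_\gamma$ to a genuine isomorphism of étale bundles over $(0,1]$, rather than merely a family of bijections indexed by global points. The approach is to run the argument of Claim~\ref{claim:descent2upperreals} internally at the generic point, in the sense of Convention~\ref{conv:fixingx}. Since every step there was geometric, a geometric construction yields a map of spaces over $(0,1]$, and its pointwise inverse is then an inverse map. I would also double-check the edge case at $1$, where $1\prec 1$, to confirm that $r(1)=I_1=\{1\}$ matches the convention of Example~\ref{ex:ridlreals}.
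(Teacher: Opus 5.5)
Your proposal is correct and follows the paper's proof essentially step for step. For $\K\J\cong\id$, you use Key Claim~\ref{claim:descent2upperreals}(iii) at the generic $\gamma\in(0,1]$, check compatibility with descent data on representatives $y\in F(q)$, $q\in I_\beta$, via the cocycle condition (ii), and get naturality from the compatibility squares of $\D'$-morphisms. For $\J\K\cong\id$ you go through Step~2 of the Lifting Lemma, whereas the paper applies Lemma~\ref{lem:filteredCOLIMITpts} directly at a generic upper real, $\colim_{q\in I_\gamma}F(I_q)\cong F(I_\gamma)$; this is the same computation, and the direct form makes clear that the isomorphism is the canonical comparison map your naturality argument relies on.
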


\begin{proof} Let $\J\colon\mathcal{D}'\rightarrow \overleftarrow{[0,1]}$ and $\K\colon\mathcal{S}\overleftarrow{[0,1]}\rightarrow\mathcal{D}'$ be the two functors as defined in Sections~\ref{subsec:ElephantstDIRECTION} and \ref{subsec:K2ndDIRECTION}. We now check that $\J,\K$ are mutually inverse.
	
	\subsubsection*{Step 1: Verifying $\K\circ\J\cong\id_{\D'}$} Let $(F,\theta)\in\mathcal{D}'$, and write
$$(\widehat{\overline{F}},\widehat{\theta}):=\K\circ\J(F,\theta).$$
We shall regard the sheaves as maps to the object classifier, i.e. as maps $F,\widehat{\overline{F}}\colon (0,1]\to\OC\,.$

The core mechanism of our argument is  Key Claim~\ref{claim:descent2upperreals} (iii), which gives an isomorphism
	\[\theta_\gamma \colon \colim_{q\in I_\gamma}F(q)\xrightarrow{\sim} F(\gamma)\,,\qquad\text{for}\, \gamma\in (0,1]\,.\] 
In what follows, we show that $\theta_{\gamma}$ extends to a natural isomorphism of objects $(F,\theta)\cong(\widehat{\overline{F}},\widehat{\theta})$: we proceed by first identifying the underlying sheaves, before verifying compatibility with the descent data. Finally, we observe that the cocone-induced construction is functorial in $F$, giving the required naturality.

\subsubsection*{Step 1a: $\widehat{\overline{F}}\cong F$ as sheaves.}

Reviewing Equation~\eqref{eq:r-map}, the map $r\colon (0,1]\to \overleftarrow{[0,1]}$ sends a Dedekind $\gamma\in (0,1]$ to its right Dedekind section, represented as a rounded ideal $I_{\gamma}$. Thus, unwinding the definition of $\mathfrak K\circ\mathfrak J$, we get	
\[\widehat{\overline{F}}(\gamma)=\overline{F}\big(r(\gamma)\big) = \overline{F}(I_\gamma)= \displaystyle\colim_{q\in I_\gamma}F(q)\cong F(\gamma)\,,\]
 where the final isomorphism is by Key Claim~\ref{claim:descent2upperreals}. Since the isomorphism was constructed geometrically  (Convention~\ref{conv:geometricity}) and done for generic $\gamma\in (0,1]\,$, this extends to an isomorphism of point-free maps $\widehat{\overline F}\cong F,$ and thus an isomorphism of sheaves.

\subsubsection*{Step 1b: Compatibiity with descent data} To show that the isomorphism of sheaves in Step 1a extends to an isomorphism in the descent category $\D'$, we need to verify its compatibility with the descent data. In the language of Construction~\ref{cons:inverse-setup}, this amounts to showing that the diagram \footnote{A remark on notation: We have been careful to denote the multiplication $\M(\alpha,\beta)=\alpha\cdot\beta$ as opposed to just $\M(\alpha,\beta)=\alpha\beta$. One reason for this is to emphasise that an algebraic action has taken place. Another reason is to reduce potential confusion between the morphism $\theta_{(\alpha\cdot\beta)}$ in Diagram~\eqref{eq:descentISO1} and the  morphism $\theta_{\alpha\beta}\colon F(\alpha)\rightarrow F(\beta)$ as defined in Equation~\eqref{eq:THETAspecMORPHISM}.}: 
\begin{equation}\label{eq:descentISO1}
\begin{tikzcd}
\widehat{\overline{F}}(\beta) \ar[r,"\theta_\beta"] \ar[d,swap,"\widehat{\theta}"] & F(\beta) \ar[d,"\theta"]\\
\widehat{\overline{F}}(\alpha\cdot \beta) \ar[r,"\theta_{\alpha\cdot\beta}"]& F(\alpha\cdot\beta)
\end{tikzcd}
\end{equation}
commutes for generic $(\alpha, \beta)\in(0,1]\times (0,1]$. Notice  the horizontal arrows in Diagram~\eqref{eq:descentISO1} are well-defined by Step 1a, which yields the identity  \[
\widehat{\overline F}(\beta)
=
\colim_{q\in I_\beta}F(q) \qquad\text{and}\qquad \widehat{\overline F}(\alpha\cdot\beta)
=
\colim_{q\in I_{\alpha\cdot\beta}}F(q) \,.
\]
In particular, by the universal property of colimits, it suffices to show that the two composites $\theta_{\alpha\cdot\beta}\circ\widehat\theta$ and $\theta\circ\theta_\alpha$ agree after precomposition with each component map for $\widehat{\overline F}(\beta)$, which we denote
\[
t_{q\beta}\colon F(q)\longrightarrow
\colim_{q\in I_\beta}F(q),
\qquad q\in I_\beta.
\]

These component maps were left implicit in our proof of Key Claim~\ref{claim:descent2upperreals}, so it is worth making explicit how they determine the horizontal isomorphisms in Diagram~\eqref{eq:descentISO1}. Recall: given $(F,\theta)\in\D'$, we defined a system of transition maps on the fibres 
$$\theta_{\gamma'\gamma}\colon F(\gamma')\to F(\gamma).$$
Read in our context, this assembles into the cocone diagrams
\begin{equation}\label{eq:descentISO2}
\begin{tikzcd}
F(q) \ar[rr,"t_{q\beta}"] \ar[drr,"\theta_{q\beta}",swap]&& \displaystyle\colim_{q\in I_{\beta}}F(q)\ar[d,"\theta_{\beta}"]\\
&& F(\beta)
\end{tikzcd}\qquad \qquad \begin{tikzcd}
F(q) \ar[rr,"t_{q(\alpha\cdot\beta)}"] \ar[drr,"\theta_{q(\alpha\cdot\beta)}",swap]&& \displaystyle\colim_{q\in I_{\alpha\cdot\beta }}F(q) \ar[d,"\theta_{\alpha\cdot\beta}"]\\
&& F(\alpha\cdot\beta)
\end{tikzcd}\,\,\,,
\end{equation}
whose cocone maps $\theta_{\alpha},\theta_{\alpha\cdot\beta}$ are now known to be isomorphisms. 
Moreover, for any $q\in I_{\alpha}$, we also get the cocone diagram
\begin{equation}\label{eq:descentISO3}
\begin{tikzcd}
F(q) \ar[rr,"t_{q\beta}"] \ar[drr,"t_{q(\alpha\cdot\beta)}",swap]&& \displaystyle\colim_{q\in I_{\beta}}F(q)\ar[d,dashed,"\widehat{\theta}" yshift=0.4em]  = \overline{F}(I_{\beta})\\
&& \displaystyle\colim_{q\in I_{\alpha\cdot\beta}}F(q) = \overline{F}(I_{\alpha\cdot\beta})
\end{tikzcd}\,.
\end{equation}
To identify the induced cocone map with the descent data $\widehat{\theta}$, recall from Construction~\ref{cons:inverse-setup} that
$$\widehat{\theta}\colon \overline{F}(I_{\beta})\to \overline{F}(I_{\alpha\cdot\beta})$$
is obtained by applying the map $\overline{F}$ to the specialisation order
\begin{equation}\label{eq:Ibab}
I_\beta\sqsubseteq I_{\alpha\cdot\beta}
\end{equation}
on the upper reals. However, the map $\overline{F}$ itself is
constructed from the system of fibres $F(q)$ via colimits,
\[
\overline F(I)=\colim_{q\in I}F(q),
\]
and so its action on the component maps $t_{q\beta}$ is given by the
corresponding component maps $t_{q(\alpha\cdot\beta)}$. In other words, 
$\widehat{\theta}$ is precisely the morphism between the corresponding
colimits induced by the inclusion~\eqref{eq:Ibab}. In particular, this yields the identity 
\begin{equation}\label{eq:descentISO4}
\widehat{\theta}\circ t_{q\beta}
=
t_{q(\alpha\cdot\beta)}
\qquad(q\in I_\beta).
\end{equation}
\smallskip

We now assemble the pieces to establish the desired identity 
\begin{equation}\label{eq:1b-GOAL}
\theta_{\alpha\cdot\beta}\circ\widehat\theta=\theta\circ\theta_\alpha\,.
\end{equation}
For the LHS, combine Diagram~\eqref{eq:descentISO2} and Equation~\eqref{eq:descentISO4} to obtain the identity 
\begin{equation}\label{eq:1b-one}
\theta_{q(\alpha\cdot\beta)}=\theta_{\alpha\cdot\beta} \circ t_{q(\alpha\cdot\beta)} =\theta_{\alpha\cdot\beta}\circ  \widehat{\theta}\circ t_{q\beta} \qquad \qquad(q\in I_\beta).
\end{equation}
For the RHS, the descent morphism $\theta$, by definition, induces the following map on fibres 
\[
\theta_{\beta(\alpha\cdot\beta)}\colon
F(\beta)\longrightarrow F(\alpha\cdot\beta).
\]
Therefore, for any $q\in I_\beta$, we compute
\begin{align}\label{eq:1b-two}
\theta\circ\theta_\beta\circ t_{q\beta} &= \theta_{\beta(\alpha\cdot\beta)}\circ\theta_\beta\circ t_{q\beta} & \text{[since $\theta=\theta_{\beta(\alpha\cdot\beta)}$]}\nonumber\\
&= \theta_{\beta(\alpha\cdot\beta)} \circ \theta_{q\beta} & \text{[by Diagram~\eqref{eq:descentISO2}]}\nonumber \\
&= \theta_{q(\alpha\cdot\beta)} & \text{[by Key Claim~\ref{claim:descent2upperreals} (ii)]}
\end{align}
Hence, by Equations~\eqref{eq:1b-one} and ~\eqref{eq:1b-two}, the composites $\theta_{\alpha\cdot\beta}\circ\widehat\theta$ and $\theta\circ\theta_\alpha$ agree after precomposition with $t_{q\beta}$ for every $q\in I_\beta$. By the universal property of the
colimit, this proves~\eqref{eq:1b-GOAL}. In English: this shows that the isomorphism of sheaves in Step 1a is in fact compatible with the descent data, and thus yields an isomorphism of objects in $\D'$
\[
(F,\theta)\cong
(\widehat{\overline F},\widehat\theta).
\]
\subsubsection*{Step 1c: Naturality.}  It remains to show that the isomorphisms constructive above are also {\em natural} in $(F,\theta)$. On this front, let $u\colon (F,\theta)\rightarrow (G,\xi)$ be a $\D'$-morphism. We must show that 
\begin{equation}\label{eq:KJnaturality}
\K\J(u)\circ\eta_{(F,\theta)}
=
\eta_{(G,\xi)}\circ u,
\end{equation}
where $\eta_{(F,\theta)}$ and $\eta_{(G,\xi)}$ are the isomorphisms
constructed in Steps~1a and~1b. In fact, it suffices to verify~\eqref{eq:KJnaturality} on the fibres. Fixing some $\gamma\in (0,1]$, this amounts to the commutativity of the diagram 
	\begin{equation}
\begin{tikzcd}\label{eq:KJnaturality-fibre}
F(\gamma) \ar[rr,"u_\gamma"] && G(\gamma)\\
\displaystyle\colim_{q\in I_\gamma} F(q) \ar[u,"\theta_{\gamma}"] \ar[rr,"\K\J(u)"]&& \displaystyle\colim_{q\in I_\gamma} G(q) \ar[u,"\xi_{\gamma}"]
\end{tikzcd}
\end{equation}

Recall that $\J(u)$ was constructed by applying the universal property
of the colimits to the fibre maps
\[
u_q\colon F(q)\longrightarrow G(q).
\]
Thus, writing
\[
t_{q\gamma}\colon F(q)\longrightarrow
\colim_{q\in I_\gamma}F(q),
\qquad
t'_{q\gamma}\colon G(q)\longrightarrow
\colim_{q\in I_\gamma}G(q)
\]
for the respective cocone maps, we have
\begin{equation}\label{eq:KJ-on-cocone}
\K\J(u)\circ t_{q\gamma}
=
t'_{q\gamma}\circ u_q.
\end{equation}
On the other hand, as seen in Diagram~\eqref{eq:descentISO2}, the defining property of the maps
$\theta_\gamma$ and $\xi_\gamma$ gives
\begin{equation}\label{eq:1c-one}
\theta_\gamma\circ t_{q\gamma}=\theta_{q\gamma},
\qquad
\xi_\gamma\circ t'_{q\gamma}=\xi_{q\gamma}.
\end{equation}
Since $u$ is a $\D'$-morphism, its fibre maps are compatible with the
transition maps, so
\begin{equation}\label{eq:1c-two}
u_\gamma\circ\theta_{q\gamma}
=
\xi_{q\gamma}\circ u_q.
\end{equation}
Consequently, plugging in the relevant identities, compute for every $q\in I_\gamma$:
\begin{align*}
u_{\gamma}\circ \theta_\gamma\circ t_{q\gamma}&= u_{\gamma}\circ \theta_{q\gamma} && \text{[by Equation~\eqref{eq:1c-one}]} \\
&= \xi_{q\gamma}\circ u_q && \text{[by Equation~\eqref{eq:1c-two}]} \\
&=\xi_{\gamma}\circ t'_{q\gamma}\circ u_q && \text{[by Equation~\eqref{eq:1c-one}]}\\
&= \xi_{\gamma}\circ \K\J(u)\circ t_{q\gamma} && \text{[by Equation~\eqref{eq:KJ-on-cocone}]}
\end{align*}
Hence, the two morphisms in Diagram~\eqref{eq:KJnaturality-fibre}
agree after precomposition with every cocone map $t_{q\gamma}$, where
$q\in I_\gamma$. By the universal property of the colimit, the
morphisms are therefore equal. Since Steps~1a--1c hold at the generic
point $\gamma\in(0,1]$, these pointwise isomorphisms yield a corresponding natural isomorphism of functors 
\[
\eta\colon\K\J\xrightarrow{\sim}\id_{\D'}.
\]
This completes Step 1.
	
	\subsubsection*{Step 2: Verifying $\J\circ\K\cong\id_{\mathcal{S}\protect\overleftarrow{[0,1]}}$}  Let $F$ be a sheaf over $\overleftarrow{[0,1]}$, and write 
	$$(\widehat{F},\widehat{\theta}):=\mathfrak{K}(F)\qquad\qquad \overline{\widehat{F}}:=\mathfrak{J}\circ\mathfrak{K}(F)\,.$$
	\smallskip 
Here we verify that $\overline{\widehat F}\cong F$ naturally in $F$.
	\begin{enumerate}[label=(\alph*)]
		\item \emph{$\overline{\widehat{F}}\cong F$ as sheaves.} For any upper real $I_\gamma\in\overleftarrow{[0,1]}$, one easily verifies that
		\[\overline{\widehat{F}}(I_\gamma)=\displaystyle\colim_{q\in I_\gamma}\widehat{F}(q)=\displaystyle\colim_{q\in I_{\gamma}}F(r(q))=\displaystyle\colim_{q\in I_\gamma}F(I_q)\cong F(I_\gamma),\]
		where the final isomorphism follows from the fact that maps preserve filtered colimits of points (Lemma~\ref{lem:filteredCOLIMITpts}).\footnote{Compare this, perhaps, with our use of Key Claim~\ref{claim:descent2upperreals} in Step 1a.} 
		\item \emph{Naturality.} Naturality essentially follows from the colimit structure built into the definition of $\J\K$. To elaborate, given a morphism $u\colon F\to G$, the morphism $\J\K(u)$ is, by construction,
		the map induced on the above colimits by the morphisms
		\[
		u_{I_q}\colon F(I_q)\longrightarrow G(I_q).
		\]
	Thus, writing
	\[
	t_{q\gamma}\colon F(I_q)\longrightarrow
	\colim_{q\in I_\gamma}F(I_q),
	\qquad
	t'_{q\gamma}\colon G(I_q)\longrightarrow
	\colim_{q\in I_\gamma}G(I_q)
	\]
	for the respective cocone maps, we have
	\[
	\J\K(u)\circ t_{q\gamma}
	=
	t'_{q\gamma}\circ u_{I_q}.
	\]
	On the other hand, the naturality of $u$ with respect to the morphism
	$I_q\sqsubseteq I_\gamma$ gives
	\[
	u_{I_\gamma}\circ t_{q\gamma}
	=
	t'_{q\gamma}\circ u_{I_q}.
	\]
	Hence $\J\K(u)$ and $u_{I_\gamma}$ agree after precomposition with every
	cocone map $t_{q\gamma}$. By the universal property of the colimit, they
	are therefore equal under the identifications
	\[
	\colim_{q\in I_\gamma}F(I_q)\cong F(I_\gamma),
	\qquad
	\colim_{q\in I_\gamma}G(I_q)\cong G(I_\gamma).
	\]
	Thus the isomorphisms
	$\overline{\widehat F}\cong F$ are natural in $F$.	

	\end{enumerate}
	This completes Step 2, and we are done.
\end{proof}

Let's step back for a moment. What would happen if we used standard
descent to define the Archimedean place instead of lax descent? Given
the association between group completion and standard descent
(cf.~Discussion~\ref{dis:GroupoidReflection}), one may expect
a loss of information. The following
observation confirms this.

\begin{observation}\label{obs:grpcompletion}
	Let $Z$ denote the space corresponding to the standard descent topos of
	Construction~\ref{cons:ArchLAXtopos}. Then
	\[
	Z\cong\{*\}.
	\]
\end{observation}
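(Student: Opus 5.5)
We would prove this by reusing the lax analysis just completed, rather than by a general descent theorem. The diagram of Construction~\ref{cons:ArchLAXtopos} is not a groupoid, so Theorem~\ref{thm:TRIVIALfreeTRANSaction} does not apply directly. Instead we observe that the standard descent category $\Des$ is the full subcategory of $\D'$ consisting of those $(F,\theta)$ whose descent data $\theta$ is invertible. Transporting along the equivalence $\J\colon\D'\simeq\baseS\overleftarrow{[0,1]}$ of Theorem~\ref{thm:ARCHIMEDEANPLACE}, $Z$ is then identified with the full subcategory of sheaves $F\colon\overleftarrow{[0,1]}\to\OC$ for which every specialisation morphism $s_{\gamma'\gamma}\colon F(I_{\gamma'})\to F(I_\gamma)$, with $I_{\gamma'}\sqsubseteq I_\gamma$, is an isomorphism. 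This holds because, by Construction~\ref{cons:inverse-setup} and Step~1b of Theorem~\ref{thm:ARCHIMEDEANPLACE}, $\widehat\theta$ is exactly given by these maps for $\gamma'=\beta$ and $\gamma=\alpha\cdot\beta$. Every pair $\gamma'\geq\gamma$ arises in this form, with $\alpha=\gamma/\gamma'$.

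The goal is then to show that such sheaves are precisely the constant sheaves, i.e.\ that the full subcategory is equivalent to $\Set\simeq\baseS\{*\}$ (Corollary~\ref{cor:Set}). We proceed in three steps.

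First, a constant sheaf $!^*S$ obviously has all transition maps identities, so it lies in the subcategory. This gives a functor $\Set\to Z$.

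Second, conversely, suppose $F$ has invertible transitions. Using Observation~\ref{obs:CHARof[0,1]SHEAVES}, present $F$ by the system $F(q)$, $q\in\qint$, with isomorphic transition maps. All of these are canonically isomorphic to $S:=F(1)$ via $\theta_{1q}$. The cocycle condition makes the identification coherent, and the colimit condition of Lemma~\ref{lem:liftingsheaves} then gives $\overline F(I)=\colim_{q\in I}F(q)\cong S$ naturally in $I$. In other words, $F\cong !^*S$, and the evaluation $F\mapsto F(I_1)$ at the top point is inverse to the constant-sheaf functor.

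Third, the evaluation functor should be fully faithful on $Z$. A morphism $u\colon !^*S\to !^*T$ is determined by its components $u_q$, and naturality with identity transitions forces $u_q=u_1$ for all $q$.

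The main obstacle is keeping all of this geometric. In particular, we must argue that "$F$ is constant" is witnessed by the global point $I_1$ (note $1\prec 1$, so $I_1=\{1\}$ is a genuine point) and not by a classical connectedness argument on $\overleftarrow{[0,1]}$. The Lifting Lemma handles this, since it reduces everything to the discrete index $\qint$, where no topology intervenes. We must also check carefully that invertibility of $\theta$ in $\D'$ corresponds exactly to invertibility of all $s_{\gamma'\gamma}$, and not merely of those between rationals. Here we would use the colimit condition (Claim~\ref{claim:descent2upperreals}(iii)): any $\theta_{\gamma'\gamma}$ is a filtered colimit of rational transitions, hence invertible when those are. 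Finally, this matches the heuristic of Discussion~\ref{dis:GroupoidReflection}: group-completing the monoid $(0,1]$ acting on $(0,1]$ collapses everything to a point.
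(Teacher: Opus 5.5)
Your proof is correct, but it takes a genuinely different route from the paper's.

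\textbf{The paper's route.} The paper never touches sheaves. It works with the universal quotient map $p\colon(0,1]\to Z$. From $p\circ\pi=p\circ\M$ it reads off $p(\beta)=p(\beta\beta')=p(\beta')$, so $p$ is constant. It then takes the image $Z'=\{*\}$, obtains $j\colon Z\to\{*\}$ from the coequaliser property, and concludes that $i\circ j=\id_Z$ because $p$ is an epimorphism. This is short and completely independent of Theorem~\ref{thm:ARCHIMEDEANPLACE}. However, it reasons as if the pseudo-coequaliser were a strict coequaliser of points. In particular, it leaves implicit why the invertible 2-cells $p(\beta)\cong p(\alpha\beta)$ cannot generate non-trivial automorphisms of the resulting point. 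For $\BG$ with $G$ a discrete group, a constant $p$ does not force a trivial quotient.

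\textbf{Your route.} You compute the topos itself. $\Des$ is literally the full subcategory of $\LDes=\D'$ on objects with invertible $\theta$. Transporting along $\J,\K$ identifies it with the sheaves on $\overleftarrow{[0,1]}$ all of whose specialisation maps are invertible. Via the Lifting Lemma~\ref{lem:liftingsheaves}, these are the essential image of $!^*\colon\Set\to\baseS\overleftarrow{[0,1]}$.

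\textbf{What your route buys.} Automorphisms are handled automatically by full faithfulness of $!^*$. You also get extra structure: the inclusion $\Des\hookrightarrow\D'$ becomes exactly $!^*$. This turns the group-completion heuristic of Discussion~\ref{dis:GroupoidReflection} into a precise statement.

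\textbf{Costs.} You depend on the full strength of Theorem~\ref{thm:ARCHIMEDEANPLACE}. Your third step also uses the Lifting Lemma at the level of morphisms, whereas the paper states it only for objects. You can bypass the latter as follows.
\begin{itemize}
\item Every rounded ideal of $(\qint,\prec)$ contains $1$, since $1\prec q$ for all $q$.
\item Hence $I_1=\{1\}$ is the \emph{bottom} of $\sqsubseteq$, not the top, and the only open containing it is $\uparrow 1$, the whole space.
\item So $I_1$ is a focal point, $F(I_1)\cong\Gamma(F)$, and $!^*$ is fully faithful.
\end{itemize}

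\textbf{A redundancy.} Your closing worry about invoking Claim~\ref{claim:descent2upperreals}(iii) is unnecessary. The rational instances follow by pulling the generic invertibility back along the global points $(q/q',q')$. The converse direction is already covered by your first step.
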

\begin{proof} The argument is short, but it will be helpful to organise it into stages.
	
\subsubsection*{Step 0: Setup} By construction, $Z$ is the coequaliser of $$\pi,\M\colon (0,1]\times (0,1] \to (0,1]\,$$
subject to descent conditions. Let
$$p\colon (0,1]\to Z$$
denote the universal quotient map, and let $Z'$ denote the image of $(0,1]\times(0,1]$ under the map
$p\circ\pi$ (equivalently, under $p\circ\M$). This comes equipped with an inclusion map
\[
i\colon Z'\hookrightarrow Z.
\]
\subsubsection*{Step 1: Constant image} We show that $Z'=\{\ast\}$. Since $p\circ\pi=p\circ\M$, we have
\[
p(\beta)
=
p(\beta\beta')
=
p(\beta') \qquad\text{for any}\, \beta,\beta'\in(0,1]\,.
\]
The first equality follows by evaluating
$p\circ\pi=p\circ\M$ at $(\beta',\beta)$, while the second follows by
evaluating it at $(\beta,\beta')$. Thus $p$ is constant. Since $Z'$ is the image of $p\circ\pi$, it follows that $Z'=\{*\}$, as claimed.

\subsubsection*{Step 2: Trivialisation}

Let $p'\colon(0,1]\to\{*\}$ be the unique map to the singleton space. Assemble our data into the following diagram
\begin{equation}
\begin{tikzcd}
(0,1] \times (0,1] \ar[r,shift left=.75ex,"\pi"]
\ar[r,shift right=.75ex,swap,"\M"]
&
(0,1] \ar[r,"p"] \ar[ddr,swap, shift right=1,"p'"] 
& Z \ar[dd,  bend right, swap,dashed,"j"]\\
\\
& & \{*\}= Z \ar[uu,hook, bend right, swap,"i"]
\end{tikzcd}
\end{equation}
where $j$ is obtained via the universal property of coequalisers. In particular, this gives $j\circ p = p'$.

We claim that $Z\cong\{*\}$. The fact that $j\circ i=\id_{\{*\}}$ is obvious. For the converse direction, compute for any $\beta\in(0,1]$:
\[i\circ j\circ p(\beta)=i\circ p'(\beta)= p(\beta),\]	
where $i\circ p'=p$ by definition of $Z'$. Since $p$ is an epimorphism, this implies $i\circ j =\id_{Z}$.
\end{proof}

\begin{remark} The proof strategy of Observation~\ref{obs:grpcompletion} can be adapted to give an alternative proof that a single non-Archimedean place corresponds to a singleton. Notice, however, we still need to verify that Diagram~\eqref{eq:groupoidFREETRANSITIVE} is an {\em open} groupoid in order to apply Moerdijk's Stability Theorem~\ref{thm:MoerdijkSTABILITY}, which was used in Theorem~\ref{thm:ALLNAplace}'s proof that $[\mathrm{places}_{NA}]\cong \ISpec(\Z)_{\neq (0)}$.  Perhaps the simplest way to show this is by comparing Diagram~\eqref{eq:groupoidFREETRANSITIVE} with the obvious \v{C}ech groupoid~\eqref{eq:KERNELpairGROUPOID}. In which case, it is more natural to use the \v{C}ech groupoid and its associated descent data directly to establish the trivialisation, as in Theorem~\ref{thm:DESCENTsinglePRIME}.
 \end{remark}

\section{Non-Trivial Forking of Sheaves}\label{sec:ARCHvsNONARCH} This section investigates the topos-theoretic differences between the Archimedean vs. non-Archimedean place. By Theorems~\ref{thm:DESCENTsinglePRIME} and \ref{thm:ARCHIMEDEANPLACE}, we already know that
$$\D\simeq\Set\qquad\D'\simeq \baseS\overleftarrow{[0,1]}\,.$$
Motivated by this, we ask: 
\begin{question}\label{qn:forkLAXdescent} What kinds of sheaves are eliminated by standard vs. lax descent? Alternatively, how wild or complicated are the sheaves of $\D'$ compared to those of $\D$? 
\end{question}

The following basic observation tells us where to start looking.

\begin{observation}\label{obs:YdisjointDECOMPOSITION} Let $X$ be a locally connected localic space, and let $f\colon Y\to X$ be an \'{e}tale bundle on $X$. \underline{Then}, $Y$ is locally connected. In particular, $Y$ admits a pairwise disjoint decomposition
		\begin{equation}\label{eq:Ydecomposition}
	Y=\displaystyle\coprod_{i\in I} Y_i.
	\end{equation}
into open connected subspaces. 
\end{observation}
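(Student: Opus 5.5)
The plan is to split the argument into two parts: first show that $Y$ is locally connected, then deduce the decomposition from the general theory of locally connected locales.

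For local connectedness, I will use that an étale map $f\colon Y\to X$ is a local homeomorphism (Fact~\ref{fact:sheaves}). Concretely, \cite[Proposition 1.49]{Vi07} gives a basis of opens $U\subseteq Y$ for which $f|_U\colon U\xrightarrow{\cong} f(U)$ is an isomorphism onto an open $f(U)\subseteq X$. Fix such a $U$.
\begin{enumerate}[label=(\roman*)]
\item Since $X$ is locally connected, the open $f(U)$ is covered by connected opens $W\subseteq f(U)$. This uses that open sublocales of locally connected locales are locally connected.
\item Transport these along the isomorphism $u:=(f|_U)^{-1}$. Each $u(W)$ is then connected, because connectedness is invariant under isomorphism of locales.
\item Each $u(W)$ is open in $Y$, because $u(W)\subseteq U$ is open in $U$ and $U$ is open in $Y$.
\end{enumerate}
Letting $U$ range over the basis, the connected opens of $Y$ form a basis of $\Omega_Y$. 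This is one of the standard equivalent formulations of local connectedness.

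For the decomposition, I will invoke the standard fact that a locally connected locale $Y$ is the coproduct of its connected components (cf.\ \cite[C1.5]{Elephant}). Geometrically, local connectedness means $Y\to\{\ast\}$ has an extra left adjoint $\pi_0$ to the constant-sheaf functor. The set $I:=\pi_0(Y)$ indexes the components, and the unit map $Y\to I$ (discrete) has connected open fibres $Y_i$. The fibres of a map to a discrete space are pairwise disjoint and cover $Y$, which gives $Y=\coprod_{i\in I}Y_i$.

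The main obstacle is making this geometric rather than classical. In the point-free, topos-valid setting, ``connected'' must mean the positive notion: inhabited, and not splittable into two disjoint opens. The components also cannot be built by choosing points and taking maximal connected sets. I would therefore avoid a pointwise argument entirely and rely on the topos-theoretic statement that local connectedness of $\baseS Y$ is inherited from $\baseS X$ along the local homeomorphism $\baseS Y\simeq \baseS X/F$. A slice of a locally connected topos is locally connected, since $\pi_0$ is computed from the $\pi_0$ of $\baseS X$. With this, $\pi_0$ supplies the index object $I$ and the decomposition canonically.
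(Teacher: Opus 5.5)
Your proposal is correct and is essentially the paper's argument. The paper also uses the local-homeomorphism structure of $f$: each open $U\subseteq Y$ with $f|_U\colon U\cong f(U)$ is a union of connected opens transported from $X$, so $Y$ is locally connected. It then quotes \cite[Lemma~C.1.5.8]{Elephant} for the decomposition into open connected components.

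Two of your additions differ only in presentation. Your basis-of-opens phrasing is slightly more point-free than the paper's ``every point has a neighbourhood''. Your $\pi_0$ and slice-topos remarks are a valid constructive elaboration of the same cited result, not a different route.
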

\begin{proof} Following \cite[pp.~525]{Elephant}, recall that a localic space is
	\emph{locally connected} if every open subspace is expressible as a
	union of connected open subspaces. Since $f$ is a local homeomorphism, every point of $Y$ has an open neighbourhood $V\subseteq Y$ such that $f|_V\colon V\to f(V)$ is a homeomorphism onto an open subspace of $X$. As $X$ is locally connected, each $f(V)$ is a union of connected open subspaces, and hence so is $V$. Thus $Y$ is locally connected.  Applying \cite[Lemma~C.1.5.8]{Elephant}, we obtain a pairwise disjoint
	decomposition of $Y$ into open connected subspaces.
\end{proof}

In particular, recall that:
\begin{itemize}
	\item Any $(F,\theta)\in\D$  defines a sheaf $F$ on $(0,\infty)$;
	\item Any $(F',\theta')\in\D'$ defines a sheaf $F'$ on $(0,1]$. 
\end{itemize}
Since $(0,\infty)$ and $(0,1]$ are both locally connected localic spaces, Observation~\ref{obs:YdisjointDECOMPOSITION} suggests that analysis of sheaves in $\D$ or $\D'$ 
ought to be reducible to analysis of their sheaves' connected components. Leveraging this insight, we establish the next series of observations.

\begin{observation}\label{obs:connNAsheaf} Let $(F,\theta)\in\D$, with $f\colon Y\to (0,\infty)$ as the corresponding \'{e}tale bundle. \underline{Then}, 
		\[f\cong \coprod_{I} \id_{(0,\infty)}\]
for some set $I$. 		
\end{observation}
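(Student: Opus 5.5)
The quickest route avoids connected components altogether and uses Theorem~\ref{thm:DESCENTsinglePRIME} directly. Recall that $\D=\mathrm{BN}$ comes with the canonical geometric morphism $p\colon \baseS(0,\infty)\to\D$, whose inverse image $p^*$ is the forgetful functor $(F,\theta)\mapsto F$. By Theorem~\ref{thm:DESCENTsinglePRIME} we have $\D\simeq\Set$. Under this equivalence, $p$ becomes a geometric morphism $\baseS(0,\infty)\to\Set$. Since $\Set$ is the terminal topos (Corollary~\ref{cor:Set}), such a morphism is unique up to isomorphism, so $p$ must be the unique map $!\colon(0,\infty)\to\{*\}$.

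It follows that every underlying sheaf $F=p^*(F,\theta)$ is isomorphic to $!^*(S)$ for some set $S$. In this step I will make explicit how the equivalence $\D\simeq\Set$ of Theorem~\ref{thm:TRIVIALfreeTRANSaction} was built. It passes through the \v{C}ech groupoid of $\rho\colon(0,\infty)\to\{*\}$, that is, through $\mathrm{BV}\simeq\mathrm{BH}$, and then through effective descent. The effective descent step says precisely that the comparison functor $\chi=\rho^*$ identifies $\Set$ with $\mathrm{BV}$, compatibly with the forgetful functors to $\baseS(0,\infty)$. The transport $\mathrm{BV}\simeq\mathrm{BH}$ along $\lranglet{\pi}{\M}$ leaves the underlying étale bundle over $M=(0,\infty)$ unchanged. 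Putting these together, $F\cong\rho^*(S)$ for the set $S$ corresponding to $(F,\theta)$.

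It then remains to identify $\rho^*(S)$ as an étale bundle. The inverse image of a set $S$ along $!\colon(0,\infty)\to\{*\}$ is the constant sheaf, whose étale space is $S\times(0,\infty)\to(0,\infty)$. That is the projection $\coprod_{S}\id_{(0,\infty)}$. Geometrically this holds because $!^*$ preserves colimits and the terminal object, and $S=\coprod_{s\in S}1$. Taking $I=S$ gives $f\cong\coprod_I\id_{(0,\infty)}$.

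The main obstacle, such as it is, is the bookkeeping in the second paragraph. There I must check that the chain of equivalences in the proof of Theorem~\ref{thm:TRIVIALfreeTRANSaction} commutes with the forgetful functors down to $\baseS(0,\infty)$, rather than merely producing some abstract equivalence $\D\simeq\Set$. If that bookkeeping proves awkward, I will use the uniqueness argument from the first paragraph, which yields $p\cong\,!$ without it.

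As a more hands-on fallback, in the spirit of Observation~\ref{obs:YdisjointDECOMPOSITION}, I would decompose $Y=\coprod_i Y_i$ into connected components. The invertible descent data $\theta_0(\cdot,\alpha)$ lets one transport any point of a component to any fibre. From this one would show that each $f|_{Y_i}$ is a connected covering of $(0,\infty)$ that is moreover a section, and hence isomorphic to $\id_{(0,\infty)}$.
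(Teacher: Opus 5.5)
Your proposal is correct and follows essentially the paper's route: use Theorem~\ref{thm:DESCENTsinglePRIME} to write $F$ as the image of some set $I$ under $\Set\simeq\D$, identify this with pullback along $(0,\infty)\to\{*\}$, and use colimit preservation to get $\coprod_I\id_{(0,\infty)}$. You also justify explicitly that the composite $\Set\simeq\D\to\baseS(0,\infty)$ is $!^*$, either by terminality of $\Set$ or by $U\circ\chi=\rho^*$ in the proof of Theorem~\ref{thm:TRIVIALfreeTRANSaction}; this spells out a step that the paper's item (b) takes for granted, and it makes the connected-components fallback unnecessary.
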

\begin{proof} By Theorem~\ref{thm:DESCENTsinglePRIME}, there is an equivalence
	\[
	\varphi^*\colon\Set\xrightarrow{\sim}\D.
	\]
Hence, there exists a set $I$ such that $$\varphi^*(I)\cong F.$$ 
We now translate this isomorphism into the language of \'{e}tale bundles.
	\begin{enumerate}[label=(\alph*)]
		\item Represent the singleton $\{*\}\in\Set$ as the bundle $\id_{\{*\}}\colon\{*\}\rightarrow \{*\}$. Then, any set $S$ can be represented as the disjoint coproduct $\coprod_{S}\id_{{\{*\}}}$;	
	\item  The functor $\varphi^*$ acts on bundles by pulling them back along the unique map $(0,\infty)\to \{\ast\}$. In particular, we get $\varphi^*(\id_{\{*\}})\cong \id_{(0,\infty)}$, as exhibited by the pullback square below
	\[\begin{tikzcd}
	(0,\infty) \ar[r] \ar[d,swap,"\id_{(0,\infty)}"] & \{*\} \ar[d,"\id_{\{*\}}"]\\
	(0,\infty) \ar[r] & \{*\}
	\end{tikzcd}\,\,.\]
	\item By item (a), the isomorphism $\varphi^*(I)\cong F$ may be written as
	$f\cong \varphi^*(\coprod_{I} \id_{\{\ast\}}).$ 
	\end{enumerate}

	Since $\gamma^*$ preserves arbitrary colimits, Items (a) - (c) assemble to give
	\begin{equation*}\label{eq:coprodARGUMENT}
	f\cong \varphi^*(\coprod_{I}\id_{\{*\}}) \cong 
	\coprod_{I}\varphi^*(\id_{\{*\}})\cong \coprod_{I}\id_{(0,\infty)}.
	\end{equation*}
\end{proof}

Here is the punchline. Let $(F,\theta)\in\D$ be a connected sheaf, i.e. $F$ corresponds to an \'{e}tale bundle $f\colon Y\rightarrow (0,\infty)$ where $Y$ is connected. Observation~\ref{obs:connNAsheaf} then forces $Y\cong (0,\infty)$, as illustrated in Figure~\ref{fig:flattenedsheafna}.
\begin{figure}[H]
	\centering
	\includegraphics[width=0.42\linewidth]{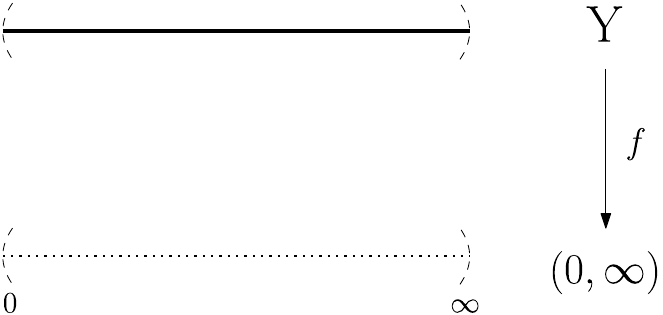}
	\caption{A connected sheaf $F$ of $\D$}
	\label{fig:flattenedsheafna}
\end{figure}

 Since $\D'\not\simeq\Set$ (Theorem~\ref{thm:ARCHIMEDEANPLACE}), one naturally suspects that the connected sheaves of $\D'$ are no longer quite as simple. The following example gives the first indications of this.

\begin{example}\label{ex:(0,alpha)} Define $(F,\theta)\in\D'$ where
	\begin{itemize}
		\item $F$ corresponds to the inclusion map $f\colon (0,\alpha)\to(0,1]$, regarded as an \'{e}tale bundle over $(0,1]$;
		\item $\theta$ is the descent data whose first coordinate map is defined as
		\begin{align}\label{eq:theta0LAXsheaf}
		\theta_0\colon (0,\alpha) \times (0,1] & \longrightarrow (0,\alpha)\\
		(y,\beta) &\longmapsto (y\cdot\beta) \nonumber
		\end{align}
		[Why is this sufficient? Recall from Section~\ref{subsec:ElephantstDIRECTION} that descent data $\theta$ is determined by the first coordinate map $\theta_0$. One then easily verifies that our $\theta_0$ satisfies the unit and cocycle conditions.]
	\end{itemize} 
In particular, notice $(0,\alpha)$ is connected, yet $f((0,\alpha))\cong(0,\alpha)\not\cong(0,1]$. 
\end{example}

Example~\ref{ex:(0,alpha)} signals an interesting difference with $\D$: the connected sheaves of $\D'$ need not be homeomorphic to the base space $(0,1]$. In fact, they can turn out to be much more complicated:

\begin{example}[Tuning Fork Sheaf]\label{ex:tuningfork} Following Example~\ref{ex:(0,alpha)}, 
	define $(F,\theta),(F',\theta')\in\D'$ whereby:
	\begin{itemize}
		\item $F$ corresponds to the inclusion $f\colon (0,\frac{1}{2})\hookrightarrow (0,1]$; and
		\item $F'$ corresponds to the identity map $f'\colon (0,1]\to (0,1]$,
		\item The descent data $\theta$ and $\theta'$ are both given by multiplication, analogous to Equation~\eqref{eq:theta0LAXsheaf}.
	\end{itemize}
Now observe that the inclusion $(0,\frac{1}{2})\hookrightarrow(0,1]$ also induces a bundle map between $f$ and $f'$. Since $\D'$ is a topos, and toposes possess all pushouts, the cokernel pair of this bundle map exists, which we illustrate in Figure~\ref{fig:tuning-fork-sheaf1}. For obvious reasons, we call this
the \emph{Tuning Fork Sheaf}. Moreover, notice that the resulting bundle space is connected: the two branches are glued along the common subspace $(0,\frac12)$.\footnote{\emph{Details.} Let $g\colon Z\to(0,1]$ denote the corresponding \'{e}tale bundle. Define two global sections $p_1,p_2\colon (0,1]\to Z$, one which maps the subspace $[\frac{1}{2},1]$ to the lower branch of $Z$, while the other maps it to the upper branch. Now suppose $h\colon Z\to S$ is a map to a discrete space. Since $(0,1]$ is connected, $h\circ p_1$ and $h\circ p_2$ are constant.  Now let $\gamma\in[\frac{1}{2},1]$ and $\gamma'\in(0,\frac{1}{2})$. Since $p_1(\gamma')=p_2(\gamma')$, conclude that $h$ is constant by observing: $h\circ p_1 (\gamma)=h\circ p_1(\gamma')=h\circ p_2(\gamma')=h\circ p_2(\gamma)$.
}


	
	\begin{figure}[ht!]
		\centering
		\includegraphics[width=0.3\linewidth]{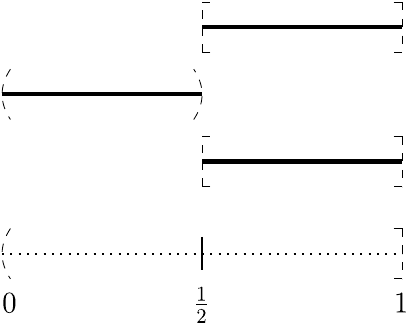}
		\caption{The Tuning Fork Sheaf of $\D'$}
		\label{fig:tuning-fork-sheaf1}
	\end{figure}
\end{example}
\begin{discussion}\label{dis:AvsNAforksheaf} The construction in Example~\ref{ex:tuningfork} is fairly flexible, and can be used to construct a wide variety of forking structures in the [connected components of the] sheaves of $\D'$. This gives a geometric way of reading the difference between standard descent vs. lax descent. In the non-Archimedean case, the rich sheafy structure is completely flattened by standard descent: as shown by Observation~\ref{obs:connNAsheaf}, all connected sheaves must be homeomorphic to the base space. By contrast, for lax descent, non-trivial forking still persists in the connected sheaves, as exhibited by the Tuning Fork Sheaf. 
\end{discussion}

Discussion~\ref{dis:AvsNAforksheaf} gives an insight into the kind of sheaves present in $\D'$
but absent in $\D$. We now turn to the complementary question: which forking configurations are ruled out by lax descent?

\begin{example}\label{ex:cantor} Reviewing Example~\ref{ex:tuningfork}, totice there was nothing special about our choice of inclusion map $(0,\frac{1}{2})\hookrightarrow(0,1]$. In fact, one can iterate the argument to obtain the sheaf as illustrated in Figure~\ref{fig:morebranching}. There is, however, a limit to this construction. If the forking is iterated indefinitely along every branch, as in Figure~\ref{fig:cantorset}, the fibre over $1$ becomes the Cantor space. Since this fibre is not discrete, the resulting bundle cannot be \'{e}tale. In other words, Figure~\ref{fig:cantorset} does not define a sheaf over $(0,1]$.
	
	\begin{figure}[H]
		\centering
		\subfloat[Iterated Forking]{	\includegraphics[width=0.44\linewidth]{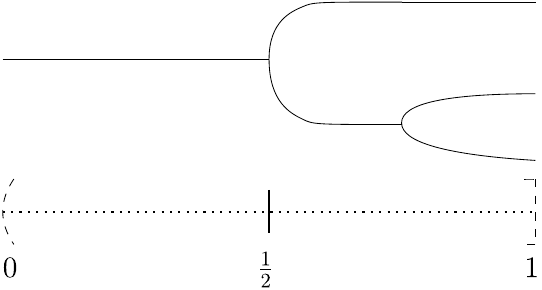}
			\label{fig:morebranching}}\quad
		\subfloat[A Forbidden Configuration]{	\includegraphics[width=0.44\linewidth]{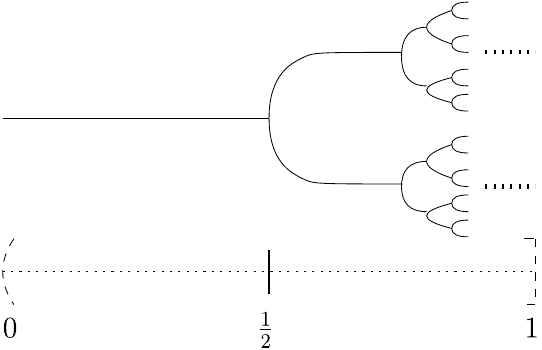}
			\label{fig:cantorset}}	
		\caption{}\label{fig:Label}
	\end{figure}
	
\end{example}

Let us sharpen our terminology. Given a sheaf $F\in\baseS(0,1]$, say that $F$ witnesses \emph{upper bound forking} if some connected component has two branches on the right of a branching point and a single on its left (as illustrated in Figure~\ref{fig:upper-bound-forking1}). Analogously, say that $F$ witnesses \emph{lower bound forking} if there exists two branches to the left of the branching point and one on its right (as illustrated in Figure~\ref{fig:lowerboundforking1}). In principle, there may be multiple instances of forking (see, e.g. Figure~\ref{fig:morebranching}), but we shall always assume that the branches of the fork do not ``join'' back up. 

\begin{figure}[H]
	\centering
	\subfloat[Upper Bound Forking]{	\includegraphics[width=0.35\linewidth]{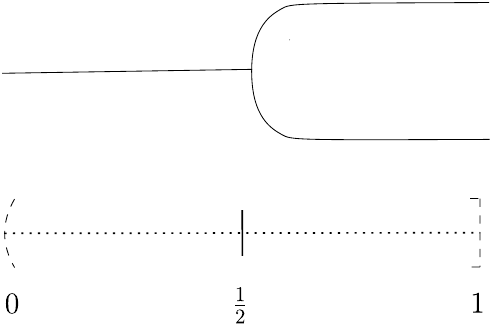}
		\label{fig:upper-bound-forking1}}\quad
	\subfloat[Lower Bound Forking]{	\includegraphics[width=0.35\linewidth]{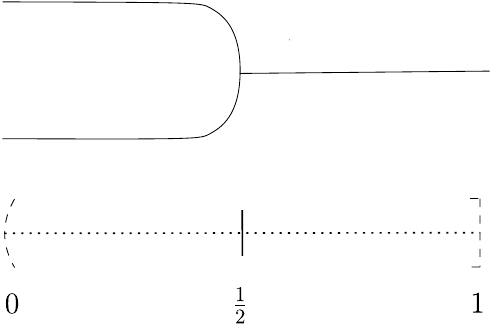}
		\label{fig:lowerboundforking1}}	
	\caption{Two Types of Forking with Branching Point at $\frac{1}{2}$}\label{fig:Forking}
\end{figure}

\begin{claim}\label{claim:ELIMlowerFORK} 	For any $(F,\theta)\in\D'$, $F\in\baseS(0,1]$ does not witness lower bound forking.
\end{claim}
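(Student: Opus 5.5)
Suppose for contradiction that some $(F,\theta)\in\D'$ has a connected component of its \'{e}tale bundle $f\colon Y\to(0,1]$ that witnesses lower bound forking at a branching point $\gamma\in(0,1)$. Concretely, near $\gamma$ there are two distinct local sections $v,v'$ of $f$, defined on some interval $(\alpha,\gamma)$ to the left of $\gamma$. There is also a single section $u$ defined on an open neighbourhood of $\gamma$, extending to the right, such that both $v$ and $v'$ agree with $u$ near $\gamma$ in the sense that their closures meet at the point $x:=u(\gamma)\in F(\gamma)$. The non-rejoining convention means that $v(a)\neq v'(a)$ for all $a\in(\alpha,\gamma)$. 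The strategy is to show that the descent data $\theta$ forces $v$ and $v'$ to coincide just to the left of $\gamma$. The engine for this is the injectivity half of Key Claim~\ref{claim:descent2upperreals}, namely Step~4 of its proof.

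The first step is to use the \'{e}tale structure at $x$. Choose an open $U\ni x$ with $f|_U\colon U\cong f(U)$, where $f(U)\supseteq(\gamma-\epsilon,\gamma+\epsilon)$. The key point is that a local homeomorphism admits a \emph{unique} sheet through $x$. Any branch passing through $x$ must therefore lie in $U$ on a neighbourhood of $\gamma$, including on its left side. So $v$ and $v'$ both agree with $u|_{(\gamma-\epsilon',\gamma)}$ for some $\epsilon'>0$, and in particular they coincide there. This contradicts the assumption that the branches are distinct.

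To make this airtight and phrased in the paper's descent language, I would run the argument through transition maps. Pick a rational $q$ slightly above $\gamma$ inside $f(U)$, and set $y:=u(q)\in F(q)$. Then $\theta_{q a}(y)$ for $a\le q$ defines a section $a\mapsto\theta_{qa}(y)$ over $(0,q]$; call this the continuity of $\theta_0$. This section agrees with $u$ at $\gamma$, and hence on an open set around $\gamma$ (the pullback argument of Step~4a). It follows that both left branches are images under $\theta$ of the single point $y$, by the surjectivity argument of Step~3, which applies because the right side of $\gamma$ has a single branch. Transporting along $\theta_{qa}$ for $a<\gamma$ then yields a unique point in each fibre $F(a)$ near $\gamma$, so $v(a)=v'(a)$.

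The main obstacle is rigor over what ``two branches to the left'' means point-free and constructively. The definition of forking is pictorial, so I would first formalise it as above: two sections on $(\alpha,\gamma)$ that are disjoint, each lying in the closure of the germ at $x$. The crux is then the uniqueness of the sheet of a local homeomorphism through $x$, together with the fact that the flow $a\mapsto\theta_{qa}(y)$ is defined leftward, which is exactly the direction of the monoid action $(0,1]$. By contrast, upper bound forking survives, as the Tuning Fork Sheaf shows, because $\theta$ only transports points to the left, and so it cannot identify branches lying to the right.
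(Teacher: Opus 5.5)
Your argument has a genuine gap: it refutes the wrong configuration. You formalise lower bound forking as two left branches $v,v'$ on $(\alpha,\gamma)$ that both converge to a \emph{single} point $x=u(\gamma)$. Your crux, uniqueness of the sheet through $x$, then shows that such a picture is not \'{e}tale. That is true, but it is only the preliminary observation in the paper's proof that this ``merged'' configuration cannot occur.

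It says nothing about genuine lower bound forking, where near $\gamma_0$ one has $|Y_i\cap F(a)|=2$ for $a\le\gamma_0$ and $|Y_i\cap F(a)|=1$ for $a>\gamma_0$. In that case there are two distinct points $x_1\neq x_2$ over $\gamma_0$ whose sheets coincide to the right. This is a non-Hausdorff but perfectly legitimate \'{e}tale space. An example is the cokernel pair in $\baseS(0,1]$ of the open inclusion $(\gamma_0,1]\hookrightarrow(0,1]$, the mirror image of the Tuning Fork Sheaf. Since your crux never uses $\theta$, it applies to every sheaf on $(0,1]$, so it cannot exclude this one.

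The same problem shows up on the other side. If you formalise upper bound forking your way, the mirrored argument proves it never occurs. Yet the Tuning Fork Sheaf exhibits upper bound forking, precisely because its fibre over $\tfrac12$ has two points. Your closing remark that $\theta$ only transports leftwards is the right intuition, but it plays no role in the argument you actually give.

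What is missing is the descent step that exploits the doubled fibre.
\begin{enumerate}[label=(\arabic*)]
\item Pick $x_3\in Y_i$ over some $\beta>\gamma_0$ on the single right branch. The path $a\mapsto\theta_{\beta a}(x_3)$ on $(0,\beta]$ is connected, so it stays in $Y_i$ and hits exactly one of $x_1,x_2$ at $\gamma_0$, say $x_1$.
\item The component has a single point over each $\gamma\in(\gamma_0,\beta]$. So the cocycle condition forces $\theta_{\gamma\gamma_0}(y)=x_1$ for every $y\in Y_i$ over any $\gamma>\gamma_0$: the descent data cannot jump branches.
\item The surjectivity half of the colimit condition, Key Claim~\ref{claim:descent2upperreals}(iii), gives $x_2=\theta_{q\gamma_0}(y)$ for some $q>\gamma_0$ and $y\in F(q)$. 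By the same connectedness argument, $y$ must lie in $Y_i$. Hence $x_2=x_1$, a contradiction.
\end{enumerate}

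Your second paragraph gestures at ``surjectivity plus a single right branch''. But in your configuration there is only one point over $\gamma$, so there is nothing for the colimit condition to contradict. Moreover, Step~3 of the Key Claim only produces a preimage from some $q'\in I_a$, which may sit on the same left branch. Without the cocycle (no-jump) step you therefore cannot route that preimage through the right branch.
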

\begin{proof} Suppose, for contradiction, that $F$ witnesses an instance of lower bound forking. Let $Y_i$ be a connected component in which this occurs, at a branching point $\gamma_0\in(0,1]$.  Thus, there exists a connected neighbourhood $V_{\gamma_0}$ of a point $\gamma_0\in (0,1]$ such that
	\[
	|Y_i\cap F(\alpha)|=
	\begin{cases}
	2,&\alpha\leq\gamma_0,\\
	1,&\alpha>\gamma_0,
	\end{cases}\qquad \alpha\in V_{\gamma_0}\,.
	\]
See Figure~\ref{fig:lowerforkingACTUAL} for an illustration. We shall fix this neighhbourhood $V_{\gamma_0}$ throughout the proof. Informally, $V_{\gamma_0}$ records the local geometry of the forking, which we shall use to fix a ``witnessing point'' $\beta\in V_{\gamma_0}$; the subsequent
argument will then propagate this local information to the rest of
$(\gamma_0,1]$ via the descent data.
	
%
	
	
	\begin{figure}[ht!]
		\centering
		\subfloat[The Connected Component $Y_i$]{	\includegraphics[width=0.37\linewidth]{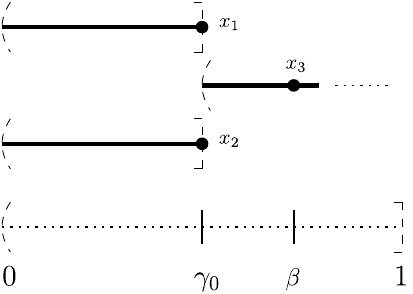}
			\label{fig:lowerforkingACTUAL}}\quad
		\subfloat[Another Forbidden Configuration]{\label{fig:lowerforkingFORBIDDEN}	\includegraphics[width=0.37\linewidth]{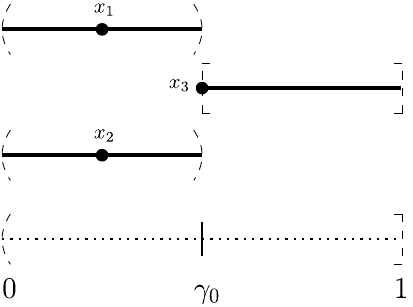}
		}	
		\caption{}	
		\label{fig:LowerBoundForking}
	\end{figure}
	
	\subsubsection*{Step 1: The fibre at $\gamma_0$} Moving from right to left, the fork begins \emph{at} $\gamma_0$: necessarily
	\[
	|Y_i\cap F(\gamma_0)|=2.
	\]
	Indeed, the alternative configuration in
	Figure~\ref{fig:lowerforkingFORBIDDEN}, where the two branches merge
	at $\gamma_0$, does {\em not} define an étale bundle. Why? If $x_3$ denotes the
	merged point, every open neighbourhood of $x_3$ meets both branches.
	Hence no such neighbourhood can map homeomorphically onto an open
	neighbourhood of $\gamma_0$ in $(0,1]$. 

	\subsubsection*{Step 2: Analysis of Descent Data} We now examine how the descent data interacts with the forking
	structure of $Y_i$. Recall from Key Claim~\ref{claim:descent2upperreals}
	that $\theta$ induces maps
	\[
	\theta_{\gamma'\gamma}\colon F(\gamma')\longrightarrow F(\gamma),
	\qquad \gamma\leq\gamma',
	\]
	satisfying the cocycle and unit conditions.

%
	
	\subsubsection*{Step 2a: Descent action restricts to $Y_i$} For any $\beta\in(\gamma_0,1]$, pick $x_3\in F(\beta)\cap Y_i$. Consider the map
	\[
	\widetilde\theta\colon(0,\beta]\longrightarrow Y,
	\qquad
	\gamma\longmapsto\theta_{\beta\gamma}(x_3).
	\] 
	By the unit condition, $\widetilde\theta(\beta)=\theta_{\beta\beta}(x_3)=x_3\in Y_i$. Since $(0,\beta]$ is connected, $\phi((0,\beta])$ is also connected, and is thus contained in
	the connected component $Y_i$. Thus $\theta_{\beta\gamma}(x_3)\in Y_i$ for every $\gamma\in (0,\beta]$. 
	
	We leverage this to exploit the lower bound forking at $\gamma_0$. By our setup, choose $\beta\in V_{\gamma_0}$ such that
	$|Y_i\cap F(\beta)|=1$. By Step~1, there exist distinct points
	$x_1,x_2,x_3\in Y_i$ such that
$$	x_1,x_2\in F(\gamma_0),
\qquad
x_3\in F(\beta),$$
as depicted in Figure~\ref{fig:lowerforkingACTUAL}. The preceding argument then gives
\begin{equation}\label{eq:Step2a-forking}
	\theta_{\beta\gamma_0}(x_3)\in Y_i\cap F(\gamma_0)
=\{x_1,x_2\}.
\end{equation}
	
	
	\subsubsection*{Step 2b: No jumps} Folllowing our choice of $\beta$ from Step 2a, assume without loss of generality that $\theta_{\beta\gamma_0}(x_3)=x_1$. Now let $\gamma\in (\gamma_0,1]$, and pick any $y\in Y_i\cap F(\gamma)$. There are two cases to check.\footnote{This case-splitting is constructive by
		\cite[Proposition~1.37]{NV}; in particular, the interval $(\gamma_0,1]$ can be obtained
		(constructively) as the pushout of $(\gamma_0,\beta]$ and $[\beta,1]$
		along their common point $\beta$. 
	
} 
	\begin{itemize}
		\item[] \textbf{Case 1:} $\gamma\in (\gamma_0,\beta]$. In which case, $\gamma\in V_{\gamma_0}$ and so $|Y_i\cap F(\gamma)|=1$. Since Step 2a gives $\theta_{\beta\gamma}(x_3)\in Y_i\cap F(\gamma),$ deduce that $\theta_{\beta\gamma}(x_3)=y$. The cocycle condition therefore gives 
		\[x_1=\theta_{\beta\gamma_0}(x_3)=\theta_{\gamma\gamma_0}\circ\theta_{\beta\gamma}(x_3)=\theta_{\gamma\gamma_0}(y).\]
	\item[] \textbf{Case 2:} $\gamma\in [\beta,1]$. In which case, Step 2a gives $\theta_{\gamma\beta}(y)\in Y_i\cap F(\beta)$ and so $\theta_{\gamma\beta}(y)=x_3$ (notice the reversal of $\beta$ and $\gamma$). The cocycle condition again gives
	$$x_1=\theta_{\beta\gamma_0}(x_3)=\theta_{\beta\gamma_0}\circ\theta_{\gamma\beta}(y)=\theta_{\gamma\gamma_0}(y). $$
	\end{itemize}
 In sum: once we restrict to the connected component $Y_i$, every point in the fibre over $(\gamma_0,\beta]$ descends to $x_1$ on the upper branch; the descent data cannot ``jump'' from one branch to the other.
 
	
	\subsubsection*{Step 2c: A contradiction} Recall that the proof of Key Claim~\ref{claim:descent2upperreals} (iii) involved verifying two implications, which we reproduce below for the reader's convenience:
	\begin{enumerate}[label=(\alph*)]
		\item $x\in F(\gamma )\implies\exists q\in I_\gamma  .\big(\exists y\in F(q).(x=\theta_{q\gamma }(y)\big)$
		\item $y,z\in F(q),\theta_{q\gamma}(y)=\theta_{q\gamma}(z)\implies \exists r\in I_\gamma. (q\prec r \land \theta_{qr}(y)=\theta_{qr}(z))$
	\end{enumerate}
	Read in our context, Implication (a) says: given $x_2\in F(\gamma_0)$, which lives on the lower branch of Figure~\ref{fig:lowerforkingACTUAL},  there exists some $q>\gamma_0$, and some $y\in F(q)$ such that $x_2=\theta_{q\gamma_0}(y)$.\footnote{Notice by definition of lower bound forking, $\gamma_0\in(0,1)$, so we avoid the edge case of $\gamma_0=1$. Hence the translation of ``$\,\exists q\in I_{\gamma}$'' as `` there exists $q>\gamma_0$'' is justified.} But Step 2b forces the identity  $\theta_{q\gamma_0}(y)=x_1\neq x_2$, giving a contradiction. This finishes the proof.
	

\end{proof}

Reviewing our work in this section, we present the following summary answer to Question~\ref{qn:forkLAXdescent}.

\begin{conclusion}\label{conc:forking}\hfill 
	\begin{enumerate}[label=(\roman*)]
		\item Standard descent eliminates all forms of forking in the sheaves of $\D$. 
		\item Although upper bound forking persists in the sheaves of $\D'$, lax descent eliminates lower bound forking. 
		\item Lax descent forces the image of every nonempty sheaf to be downward closed. More precisely, if $(F,\theta)\in\D'$ and $F$ corresponds to an \'{e}tale bundle $f\colon Y\rightarrow (0,1]$, then either $$f(Y)\cong (0,1] \qquad\text{or}\qquad f(Y)\cong (0,\alpha) \,\,\,\,\text{for some}\,\, \alpha\in (0,1].$$
	\end{enumerate}
\end{conclusion}
\begin{proof} (i) is by Observation~\ref{obs:connNAsheaf}, (ii) is Example~\ref{ex:tuningfork} and Claim~\ref{claim:ELIMlowerFORK}. For (iii), suppose $\gamma\in f(Y)$, so there exists some $y\in Y$ such that $f(y)=\gamma$. Recall from Observation~\ref{obs:theta-descent-prop} that the lax descent data gives $$\theta_0(y,\beta)\in Y \qquad\text{for any}\, \beta\in (0,1]\,.$$ 
Moreover, $(0,1]$ has the following action on fibres:  $$f(\theta_0(y,\beta))=f(y)\cdot \beta\qquad\text{for any}\, \beta\in (0,1]\,.$$
Now suppose $\gamma'\in(0,\gamma]$. Setting $\beta:=\gamma^{-1}\cdot \gamma'\in (0,1]$, the lax descent data gives the point $y':=\theta_0(y,\beta)$ satisfying
	\[f(y')=f(y)\cdot\gamma^{-1}\cdot\gamma'=\gamma\cdot\gamma^{-1}\cdot\gamma'=\gamma'.\]
Thus $\gamma'\in f(Y)$, proving that $f(Y)$ is downward closed.

Notice: in contrast to the standard descent case, we do not get upward closure of $f(Y)$ since we only have a (non-invertible) monoidal action induced by $(0,1]$.
\end{proof}

\section{A Strange Woods}\label{sec:strangewoods}
\begin{fquote}[Andr\'{e} Weil, letter to his sister \cite{WeilSister}]
	That is what one achieves, and in a very satisfactory manner, too, in the theory of “valuations” [\dots]  To define a prime ideal in a field (a field given
	abstractly) is to represent the field “isomorphically” in a p-adic field: to represent it in the same
	way in the field of real or complex numbers, is (in
	this theory) to define a “\textbf{prime ideal at infinity}”.
\end{fquote}

In mathematics, difficult problems are often approached by breaking them into smaller, more manageable pieces. This approach leads to two fundamental questions:

\begin{enumerate}
	\item How do we account for all the different pieces of the problem?
	\item How and when can we glue the pieces to form a global solution, and what are the obstructions to this reassembly?
\end{enumerate}

The Hasse local-global principle from number theory provides a particularly striking example. Here one seeks to understand whether solutions to polynomial equations over $\Q$ can be reassembled from its solutions over completions of $\Q$, i.e. the reals $\R$ and the $p$-adics $\Q_p$. In the classical picture, apparently going back to Hasse and/or Artin \cite{WeilSister}, the non-Archimedean completions are indexed by the nonzero prime ideals of $\Z$, while the real place is regarded as an additional ``prime ideal at infinity''. This viewpoint underlies a number of powerful frameworks, including Arakelov geometry, as discussed in the Introduction.

The results of this paper provide an important nuance to this perspective. When re-examined from the topos-theoretic perspective, we find that the Archimdean place does not behave like an individual point with no intrisic features; rather, it is identified with the space $\overleftarrow{[0,1]}$ of upper reals (Theorem~\ref{thm:ARCHIMEDEANPLACE}). Thus, accounting for the Archimedean place is not simply a matter of adjoining an additional point at infinity because the quotient carries an intrinsic topological structure, invisible in the classical point-set formulation. 

\smallskip

We emphasise that this is not just an artefact of passing to point-free topology. Rather, it arises from analysing the internal struture of the places as equivalence classes, which was accomplished through the use of descent techniques. Very informally, the difference can be understood in terms of the symmetry of the algebraic actions underlying the Archimedean and non-Archimedean cases. As discussed in Section~\ref{sec:NonArch}, after fixing a non-zero prime ideal $\frap$ of $\Z$, exponentiation induces a free and transitive group action on the corresponding space of non-Archimedean absolute values. Thus, in a sense, all its points are related to one another by invertible symmetries. Passing to the quotient through descent consequently collapses these symmetries, leaving a singleton  (Theorem~\ref{thm:DESCENTsinglePRIME}). By contrast, the space of Archimedean absolute values carries only a monoid action. This action introduces non-invertible comparisons between the points, producing an intrinsic asymmetry that survives the descent process. This asymmetry is reflected in the order topology of the resulting quotient space $\overleftarrow{[0,1]}$ (Theorem~\ref{thm:ARCHIMEDEANPLACE}).
\smallskip 

We are now at a very interesting mathematical juncture. We began this paper with a question originating in number theory: how might one build a framework that unifies the reals and $p$-adic fields, whilst also accommodating their differences (Question~\ref{qn:TENSION})?  Various subtleties surrounding this tension were discussed. Over the course of this paper, our understanding of the interplay between the Archimedean and non-Archimedean perspectives has begun to shift. Our topos-theoretic analysis has brought some aspects of the picture into focus that were previously obscured. Along the way, the analysis also raises a more fundamental question about the tools themselves: what information, precisely, does topos theory capture?

Looked at from a certain distance, these questions arising from number theory and topos theory start to converge on a common theme: \emph{how should the connected and the disconnected interact?} The remainder of this (primarily expository) section explores this question from both perspectives.

\subsection{The View from Topos Theory}

\subsubsection{What do Classifying Toposes Classify?} An important clue in our investigation of the non-Archimedean places was the following example by Bunge \cite{Bunge}, which we now discuss more fully:

\begin{example}\label{ex:Bunge} Let $G:=(G_0,G_1)$ be a connected localic group. Then $BG\simeq \Set$.
\end{example}
\begin{proof} Recall $\BG$ may be characterised as the category of \'{e}tale $G$-spaces, whose objects are \'{e}tale bundles $E\xrightarrow{p} G_0$ equipped with a (continuous) $G_1$-action $G_1\times_{G_0}E\xrightarrow{\bigcdot} E$ satisfying the usual axioms \cite[\S 5]{Moerdijk}. First, notice that $G_0=\{*\}$ since $G$ is a group. Since \'{e}tale bundles are fibrewise discrete (Discussion~\ref{dis:fibrewiseDISCRETE}), the bundle space of any \'{e}tale $G$-space $E\xrightarrow{p} G_0$ must therefore be discrete. 
	
Next, given $e\in E$, the $G_1$-action induces a (continuous) map
	\begin{align*}
	\argu \bigcdot e\colon G_1&\longrightarrow E\\
	g&\longmapsto g\bigcdot e.
	\end{align*}
Since $E$ is discrete and $G_1$ is connected, this map must be constant; in fact, $g\bigcdot e = e$ for all $g\in G_1$ since the $G_1$-action forces the identity $s(\ast)\bigcdot e =e$ where $s\colon G_0\to G_1$ is the unit map. Hence, since the objects of $\BG$ are just sets equipped with trivial $G_1$-action, conclude that $\BG\simeq \Set$. 
\end{proof}

The following comments give some context as to why Example~\ref{ex:Bunge} is interesting. 

\begin{discussion} Example~\ref{ex:Bunge} gave us our first indication that the topos of a single non-Archimedean place may in fact be trivial --- contrary to the expectations of Guess~\ref{guess:DhasAUTOMORPHISMS}. Although the eventual proof of Theorem~\ref{thm:DESCENTsinglePRIME} did not require the hypothesis that the $(0,\infty)$-action of Groupoid~\eqref{eq:simplicialLOCALP} is connected, one can in fact adapt the argument of Example~\ref{ex:Bunge} to give an alternate (though much more involved) proof to show that its descent topos trivialises.

\end{discussion}

\begin{discussion}\label{dis:classifyDIStorsors} The trivialisation result is also striking because it contravenes a basic expectation from the discrete setting. By Diaconescu's Theorem, the presheaf topos $\BG\simeq [G,\Set]$ classifies all $G$-torsors for any discrete group $G$ \cite[B3.2]{Elephant}. Yet when $G$ is a connected group, the fact that  $\BG\simeq \Set$ implies that for each topological space $X$ there exists essentially only one geometric morphism $\baseS X\to \BG$, even though for suitable $X$ and $G$ there may exist many non-isomorphic $G$-torsors. 
\end{discussion}

Reviewing Example~\ref{ex:Bunge}, the core mechanism of the argument rests on two general facts:
\begin{enumerate}[label=(\alph*)]
	\item All sheaves over localic spaces can be characterised as fibrewise discrete bundles (Discussion~\ref{dis:fibrewiseDISCRETE}).
	\item All maps from connected spaces into discrete sets must be constant. 
\end{enumerate}
Put together, these observations suggest that the trivialisation we encountered is not an accident, but instead a more general feature of sheaf toposes. The following observation by Lurie confirms this suspicion, and makes it precise.


\begin{observation}[Lurie's Observation {{\cite{LuriePrincipalBundle}}}]\label{obs:Lurie} Let $G$ be a localic group such that there exists a non-constant continuous map $\nu\colon \R\to G$. \underline{Then}, there cannot exist a topos $\baseS\E$ that classifies $G$-torsors over localic spaces, i.e. there does not exist a topos $\baseS\E$ such that
	\[\textbf{Geom}(\baseS X,\baseS\E)\simeq \Tor_{G}(X),\] 
	where $X$ is a localic space and $\Tor_{G}(X)$ denotes the category of $G$-torsors over $X$. 	
\end{observation}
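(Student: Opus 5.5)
We argue by contradiction, following the mechanism of Example~\ref{ex:Bunge}. Suppose $\baseS\E$ classifies $G$-torsors over localic spaces. Take $X=\{\ast\}$. Then $\textbf{Geom}(\Set,\baseS\E)\simeq\Tor_G(\{\ast\})$. Every $G$-torsor over a point is trivial, since constructively it is inhabited and so isomorphic to $G$ acting on itself. Hence this category is equivalent to the one-object groupoid with automorphism group the global points of $G$, i.e.\ $G$ acting by right translation. So $\baseS\E$ has, up to isomorphism, a single point $q\colon\Set\to\baseS\E$, and the automorphisms of $q$ correspond to the points of $G$.

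Next we turn this into a statement about a path of automorphisms parametrised by $\R$. Take $X=\R$ and consider the trivial torsor $\R\times G\to\R$. Its automorphisms are the maps $\R\to G$, acting by right translation, so $\nu$ gives an automorphism $\tau_\nu$ of the trivial torsor over $\R$. Under the assumed equivalence, $\tau_\nu$ corresponds to an automorphism of the geometric morphism $t\colon\baseS\R\to\baseS\E$ classifying the trivial torsor. Because the trivial torsor is pulled back along $!\colon\R\to\{\ast\}$, naturality of the classification in $X$ gives $t\cong q\circ{!}$.

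An automorphism of $q\circ{!}$ is a natural automorphism of the inverse image $!^*q^*$. For each object $A\in\baseS\E$ it gives an automorphism of the constant sheaf $!^*(q^*A)$ on $\R$. A constant sheaf is a trivial étale bundle $\R\times S\to\R$ with $S=q^*A$ discrete. Its automorphisms are maps $\R\to\mathrm{Aut}(S)$ with $\mathrm{Aut}(S)$ discrete, and these are constant because $\R$ is connected; this is the same mechanism (b) used in Example~\ref{ex:Bunge}. Hence every automorphism of $q\circ{!}$ comes from an automorphism of $q$ by whiskering with $!$.

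Now transport back through the equivalence. Automorphisms of $q\circ{!}$ correspond to maps $\R\to G$, and those pulled back from automorphisms of $q$ correspond to constant maps. So every map $\R\to G$ would be constant, contradicting the non-constant $\nu$.

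The main obstacle is justifying the naturality step. We need the equivalence $\textbf{Geom}(\baseS X,\baseS\E)\simeq\Tor_G(X)$ to be pseudonatural in $X$, so that pulling back torsors along $!$ matches precomposition with $!$, on 2-cells as well as objects. We interpret ``classifies'' as including this naturality. We also need $\R$ to be connected in the point-free sense, so that maps from $\R$ to a discrete space are constant; we would cite this from the locale literature, e.g.\ \cite{ViLocCompII}.
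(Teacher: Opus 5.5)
Your proposal is correct and is essentially the paper's argument. Connectedness of $\R$ makes whiskering with $!\colon\R\to\{\ast\}$ full on automorphisms; this is the paper's Claim (a), which you verify at the point $q$ classifying the trivial torsor. Meanwhile $\nu$ yields an automorphism of the trivial torsor over $\R$ that is not pulled back from $\{\ast\}$, which is Claim (b). The pseudonaturality in $X$ that you make explicit is exactly what the paper uses implicitly to compose the two (without it, the sketch's Claim (b) would have to be read as a statement about the specific pullback functor).

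One small aside: your remark that every $G$-torsor over a point is trivial is not justified for localic $G$, since an open surjection onto $\{\ast\}$ need not have a global point. Nothing in your argument depends on it.
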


\begin{proof} We give a sketch of the proof to highlight the role of connectedness; for details, see \cite[Observation 6.5.5]{NgThesis}.\footnote{We emphasise that the credit goes to Lurie.} Since $\R$ is connected, the unique projection map $p\colon \R\to\{*\}$ induces a fully faithful embedding on the level of their sheaf toposes
	\begin{equation*}
	p^*\colon \Set\hookrightarrow  \baseS\R.
	\end{equation*}
 Lurie's Observation then follows from the following two claims:
 \begin{enumerate}[label=(\alph*)]
 	\item For any topos $\baseS\E$, $p^*$ induces a fully faithful embedding 
 	\begin{equation*}\label{eq:p*inducedmap}
 	\textbf{Geom}(\Set,\baseS\E)\hookrightarrow \textbf{Geom}(\baseS\R,\baseS\E).
 	\end{equation*}	
 	\item There does not exist a fully faithful embedding 
 	\begin{equation*}
 	\Tor_G(\{*\})\hookrightarrow \Tor_{G}(\R).
 	\end{equation*}
 \end{enumerate}
Now suppose there exists a topos $\baseS\E$ such that 
	\[\textbf{Geom}(\baseS X,\baseS\E)\simeq \Tor_{G}(X),\]
	for any localic space $X$. By Claim (a), this equivalence yields a fully faithful embedding
	\[ \Tor_G(\{*\})\simeq \textbf{Geom}(\Set,\baseS\E)\hookrightarrow \textbf{Geom}(\baseS\R,\baseS\E) \simeq \Tor_G(\R),\]
	contradicting Claim (b).
\end{proof}



\begin{discussion}[{{$\infty$-toposes}}]\label{dis:inftyTOPOSclassification} Interestingly, Lurie \cite{LuriePrincipalBundle} points out that the argument of Observation~\ref{obs:Lurie} can be extended to show that $\infty$-toposes also do not classify $G$-torsors for all topological groups $G$ either. This is \emph{a priori} surprising: one may have expected that the generality of $\infty$-toposes would resolve the previous issues faced by the standard topos.\footnote{This becomes less surprising when we re-examine the motivations behind higher topos theory. Higher topos theory, as set out in \cite{LurieHigherTopos}, aims to develop a categorical framework for interpreting higher cohomology classes, analogous to how $G$-torsors describe first cohomology classes. Thus, the objective is distinct from classifying all $G$-torsors when $G$ is topological/localic.
	} In any case, Lurie's remark suggests that other frameworks are needed if we wish to deal with $G$-torsors for general topological/localic $G$.
\end{discussion}

Lurie's Observation~\ref{obs:Lurie} is powerful because it is no longer about the failure of the particular construction $\BG$; rather it rules out the possibility of classifying topos altogether for a group $G$ satisfying its hypotheses. Of course, other frameworks for classifying $G$-torsors exist  (e.g. topological stacks \cite{Noohi}), but it is less clear if they have a point-free interpretation in the sense that was important to this paper (cf. Definition~\ref{def:ptFREEspace}). More specifically, we may ask:

\begin{problem}\label{prob:Gbundles} Can the classification of $G$-torsors for general topological/localic groupoids $G$ be given a point-free interpretation, compatible with geometric logic?
\end{problem}


\subsubsection{Lax Descent} Given Lurie's Observation~\ref{obs:Lurie}, the reader may be forgiven for thinking that toposes are generally incapable of retaining meaningful
information about connected spaces. The reality, however, is more nuanced. Consider
Theorem~\ref{thm:ARCHIMEDEANPLACE} once more: although our descent Diagram~\eqref{eq:ArchLAXtopos} was constructed using the interval $(0,1]$ -- ostensibly a connected space -- the resulting quotient space is certainly non-trivial (in contrast to the non-Archimedean case). In a different vein, Conclusion~\ref{conc:forking} highlights some of the subtle and non-trivial effects that lax descent has on the connected components of sheaves. Together, these results highlight a striking parallel between the logical and sheaf-theoretic perspectives. At the level of point-free spaces,  the lax descent eliminates the left Dedekind sections of $(0,1]$ but retains the right Dedekind sections as upper reals. At the level of sheaves, the lax descent eliminates lower-bound forking but permits upper-bound forking in the connected components of the sheaves on $(0,1]$. 

\smallskip
Stepping back for a moment, one may ask if the lax descent calculations underlying Theorem~\ref{thm:ARCHIMEDEANPLACE} and Conclusion~\ref{conc:forking} point to a broader topos-theoretic principle. In particular, the observations above suggest that lax descent may admit a more general description in terms of its effects on the connected components of sheaves. Here is one possible approach to this problem. In the non-Archimedean case, the proof of Theorem~\ref{thm:DESCENTsinglePRIME} essentially reduced to observing that the map $$(0,\infty)\to \{\ast\}$$
appearing in Diagram~\eqref{eq:simplicialLOCALP} is an open surjection, and thus satisfies effective descent. This suggests the following test problem:

\begin{problem} Can the characterisation of the Archimedean place in Theorem~\ref{thm:ARCHIMEDEANPLACE} likewise be obtained from a general lax descent theorem?
\end{problem}

\begin{discussion} The existing literature on effective lax descent is a natural first place to start looking for answers; see e.g. \cite{Pitts,MoVer00,BungePittsLax}. In particular, Bunge \cite{BungePittsLax} gives a fairly general approach unifying various effective lax results via her framework of Pitts KZ-monads. Our explicit calculations raise the question of how the behaviour of lax
descent on connected components exhibited here fits into Bunge's framework, or more generally into the overall theory of effective lax descent.
	
Notice, however, that such an interpretation would only address the mechanism underlying our calculation of the Archimedean place. One would still need to understand how the resulting Archimedean and non-Archimedean pieces assemble into a single point-free space of places. This brings us to the more fundamental Problem~\ref{prob:places}, which we discuss next.
\end{discussion}

\subsection{Local-Global Questions} Section~\ref{sec:global-places} defined the topos of places $\calP$ as the lax descent topos of Diagram~\eqref{eq:PLACES-topos}. Thus far, however, our calculations have been local in nature: the trivial place and non-Archimedean places of $\Q$ are identified as singlestons, with the Archimedean place as the unit interval of upper reals: 
\[
\begin{array}{rcl}
\text{trivial place} &\longleftrightarrow& \{*\}\\
\text{non-Archimedean place at }\frap &\longleftrightarrow& \{*\}\\
\text{Archimedean place} &\longleftrightarrow& \overleftarrow{[0,1]} 
\end{array}
\]
This only gives a piecewise account of the space of places of $\Q$. Taking seriously our objective of treating the places of $\Q$ as an actual space (as opposed to an indexing set), let us restate our original motivating problem from the Introduction.

\begin{problem}\label{prob:places} Characterise the (entire) point-free space of places of $\Q$. 
\end{problem}

\begin{discussion}[The trivial place] As stated, Question~\ref{qn:TENSION} frames the problem as reconciling the differences between the Archimedean and non-Archimedean perspectives. Our analysis highlights a more immediate difficulty of reconciling the trivial place with the (non-trivial) non-Archimedean places.\footnote{We remind the reader that this paper defines the non-Archimedean absolute values as absolute values $|\cdot|$ such that $|p|<1$ for some prime $p$; in particular, they are non-trivial by definition.}
	
 Theorem~\ref{thm:ALLNAplace} shows that the space of non-Archimedean places behaves like the set of points indexed by the non-zero prime ideals of $\Z$. However, as already remarked, our methods (essentially, Moerdijk's Stability Theorem~\ref{thm:MoerdijkSTABILITY}) do not allow us to extend this to the trival place. What is interesting is that this does not appear to be just a technical limitation of our descent tools; in fact, the same difficulty is already visible at the level of absolute values.
 
This was the very subject of \cite[Problem 6.1]{NVOstrowski} from an earlier manuscript, which asks for the characterisation of the point-free space of {\em absolute values on $\Z$}. In that paper, we were able to give a complete characterisation of the space of {\em multiplicative seminorms on $\Z$} valued in the upper reals (Theorem 4.1, {\em loc. cit.}). Curiously, when we considered the space of {\em Dedekind-valued} absolute values, we were unable to even characterise the space of ultrametric absolute values.\footnote{Recall from Section~\ref{subsec:globalNA} that the ultrametric absolute values are defined as the absolute values $|\cdot|$ satisfying the ultrametric inequality -- this includes {\em both} the trivial and non-Archimedean absolute values.} The closest approximation we found was an Ostrowski-type result, stated as Theorem~\ref{thm:ostrowskiQ} here, which gives a local
 account that identifies the space of absolute values belonging to the same place.
\end{discussion}

\begin{discussion}[Archimedean place as a parameter space]\label{dis:Arch-parameter} Recall Artin-Whaples's classical result \cite{ArtinWhaples} that $\Q$ (in fact, all global fields) satisfies a product formula
	\begin{equation}\label{eq:productformula}
	\prod_{v\in\Lambda_\Q}|x|_v=1, \qquad \text{for all}\, x\neq 0 \qquad ,
	\end{equation}
	where $v$ ranges over all the places of $\Q$, including the Archimedean place. Notice the product formula is preserved under simultaneously rescaling the absolute values by any choice of exponent
	$\alpha\in[0,1]$. In light of our Theorem~\ref{thm:ARCHIMEDEANPLACE}, this suggests viewing the Archimedean place as a parameter space, capturing the normalisation across {\em all} places. 
	
This perspective has interesting precedents in other approaches to arithmetic geometry --- e.g. in the role of the deformation parameter $0<\alpha\leq 1$ in Connes--Consani's framework of {\em Absolute Algebraic Geometry} \cite[\S4.1]{ConnesConsaniATIYAH}, and, in a rather different way, in the role of the Archimedean error $\epsilon\geq0$ in the framework of {\em Globally Valued Fields}	\cite{GVF}. In our point-free context, fixing a normalisation parameter from the Archimedean place amounts to working internally over the topos $\baseS\overleftarrow{[0,1]}$  (Convention~\ref{conv:fixingx}). Equivalently, as illustrated by Figure~\ref{fig:candidatepicture}, one may conjecturally regard the Archimedean place as lying \emph{below} $\Spec(\Z)$:
	\begin{figure}[H]
		\centering
		\includegraphics[width=0.7\linewidth]{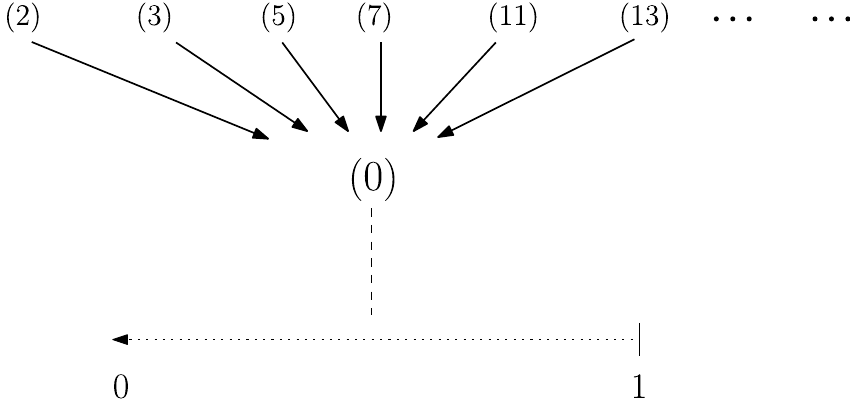}
		\caption{A candidate picture for the space of places}
		\label{fig:candidatepicture}
	\end{figure}
\noindent If this picture is correct, it should have interesting implications for our understanding of topos-theoretic descent. On the side of number theory, the perspective that the Archimedean place lives below $\Spec(\Z)$ may be suggestive to readers familiar with $\mathbb{F}_1$-geometry; see e.g. \cite{PenaLorscheid}.
\end{discussion}

Let us also take the opportunity to state two follow-up problems to Problem~\ref{prob:places}. The first is the original motivating test problem of my thesis \cite{NgThesis}:

\begin{problem}\label{prob:strangeCOMPLETIONS} Over the topos of places $\calP$, characterise the space of completions of $\Q$. In particular, how should we understand the generic Archimedean completion?
\end{problem}

\begin{discussion}\label{dis:Archcompletion} When restricted to the Archimedean place, we expect the construction from Problem~\ref{prob:strangeCOMPLETIONS} to produce what is morally a parametrised family of completions, interpolating between $\Q$ and $\R$.\footnote{There is a technicality in that we cannot direectly define an absolute value $|\cdot|^\alpha$ for a generic upper real $\alpha\in \overleftarrow{[0,1]}$: this would give an upper-valued absolute value on $\Q$, creating the same issues regarding multiplicative inverses raised in \cite[Obsservation 0.1]{NVOstrowski}. One possible
		constructive workaround is to consider the family $|\cdot|^q$ for $q\in(0,1]$.}
\begin{figure}[H]
	\centering
	\includegraphics[width=0.5\linewidth]{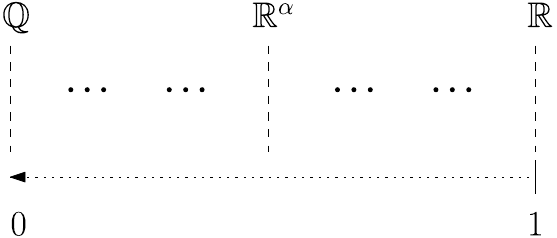}
	\caption{A parametrised family of completions over the Archimedean place}
	\label{fig:parametrisedfamilyofcompletions}
\end{figure}
\noindent Recall that the unit interval $\overleftarrow{[0,1]}$ of upper reals {\em must} contain 0 since subspaces of upper reals are closed under the Scott topology (Section~\ref{subsec:localicreals}). This suggests that that the trivial completion $\Q$ cannot be definably separated from the rest of this family. It is interesting to notice then that the Archimedean completions  $\R^\alpha$ becoming increasingly ``disconnected'' as $\alpha\to 0$ .	
\end{discussion}

\begin{discussion} Discussion~\ref{dis:Archcompletion} gives another way to appreciate some of the subtleties underlying Question~\ref{qn:TENSION}. Whenever one wants to import analytical methods from the Archimedean setting (e.g. complex analytification of varieties) to the non-Archimedean setting, the barriers to this translation are clear: non-Archimedean fields are totally disconnected, and so the naive analytification is of limited usefulness. Following Berkovich \cite{BerkovichMonograph}, however, we
now have a robust solution, which essentially involves ``filling in'' the gaps responsible for the disconnectedness of non-Archimedean spaces; see \cite{Payne} for a nice survey.
	
The opposite direction is less clear. In order to relate analytic structures over $\R$ to the (disconnected) arithmetic setting, Theorem~\ref{thm:ARCHIMEDEANPLACE} and Discussion~\ref{dis:Archcompletion} suggest that we should somehow parametrise families of analytic structures $\{\M_q\}$, perhaps over $\qint$, and examine their behaviour as $q\to0$. Ideas from condensed mathematics, in particular $p$-liquidification for $0<p\leq1$, appear relevant but more investigation is needed; see e.g. \cite[Theorem 3.11]{ClausenScholzeCondensed}.
\end{discussion}

Finally, recall that Grothendieck toposes trace their origins back to Grothendieck's attack on the Weil conjectures: the need to develop \'{e}tale cohomology (and other related cohomology theories) led him to replace ordinary topological spaces with the more general notion of a topos. Given the context of this paper, the following problem is natural:

\begin{problem} Characterise the sheaf cohomology $H^\bullet(\calP,F)$ of the topos of
	places, for suitable classes of coefficients $F$.  What geometric or arithmetic information do these cohomology groups capture? What local obstructions do they detect?
\end{problem}

\bibliography{thesis}

@article{MoVer00, title={Proper maps of toposes}, volume={148}, url={http://dx.doi.org/10.1090/memo/0705}, DOI={10.1090/memo/0705}, number={705}, journal={Memoirs of the American Mathematical Society}, publisher={American Mathematical Society (AMS)}, author={Moerdijk, I. and Vermeulen, J. J. C.}, year={2000}, language={en} }

@book{Elephant,
	author = {Johnstone, Peter T.},
	isbn = {0-19-853425-6},
	mrclass = {18B25 (18-02)},
	mrnumber = {1953060},
	mrreviewer = {Colin\ McLarty},
	pages = {xxii+468+71},
	publisher = {The Clarendon Press, Oxford University Press, New York},
	series = {Oxford Logic Guides},
	title = {Sketches of an {E}lephant: a topos theory compendium. {V}ol. 1 and 2},
	volume = {43},
	year = {2002}}

@article{NgBerk,
	author = {Ming Ng},
	journal = {Journal of Symbolic Logic, to appear},
	title = {Logical {B}erkovich Geometry: a point-free perspective},
	year = {2026}}

@article{BY08,
	abstractnote = {<jats:p> We present an adaptation of continuous first order logic to unbounded metric structures. This has the advantage of being closer in spirit to C. Ward Henson's logic for Banach space structures than the unit ball approach (which has been the common approach so far to Banach space structures in continuous logic), as well as of applying in situations where the unit ball approach does not apply (i.e. when the unit ball is not a definable set). </jats:p><jats:p> We also introduce the process of single point emboundment (closely related to the topological single point compactification), allowing to bring unbounded structures back into the setting of bounded continuous first order logic. </jats:p><jats:p> Together with results from [4] regarding perturbations of bounded metric structures, we prove a Ryll--Nardzewski style characterization of theories of Banach spaces which are separably categorical up to small perturbation of the norm. This last result is motivated by an unpublished result of Henson. </jats:p>},
	author = {Ben Yaacov, Ita{\"I}},
	doi = {10.1142/s0219061308000737},
	journal = {Journal of Mathematical Logic},
	language = {en},
	month = dec,
	number = {02},
	pages = {197--223},
	publisher = {World Scientific Pub Co Pte Lt},
	title = {CONTINUOUS FIRST ORDER LOGIC FOR UNBOUNDED METRIC STRUCTURES},
	url = {http://dx.doi.org/10.1142/S0219061308000737},
	volume = {08},
	year = {2008}}

@article{Tsi18,
	author = {Tsimerman, Jacob},
	doi = {10.4007/annals.2018.187.2.2},
	journal = {Annals of Mathematics},
	month = mar,
	number = {2},
	publisher = {Annals of Mathematics},
	title = {The Andr{\'e}-Oort conjecture for {$\mathcal{A}_g$}},
	url = {http://dx.doi.org/10.4007/annals.2018.187.2.2},
	volume = {187},
	year = {2018}}

@article{Pil11,
	author = {Pila, Jonathan},
	doi = {10.4007/annals.2011.173.3.11},
	journal = {Annals of Mathematics},
	language = {en},
	month = may,
	number = {3},
	pages = {1779--1840},
	publisher = {Annals of Mathematics},
	title = {O-minimality and the Andr{\'e}-Oort conjecture for {$\mathbb{C}^n$}},
	url = {http://dx.doi.org/10.4007/annals.2011.173.3.11},
	volume = {173},
	year = {2011}}

@unpublished{GVF,
	author = {Ita\"{i} Ben-Yaacov and Pablo Destic and Ehud Hrushovski and Micha{\l} Szachniewicz},
	note = {arXiv:2409.04570},
	title = {Globally valued fields: foundations},
	year = {2024}}

@phdthesis{NgThesis,
	author = {Ming Ng},
	school = {University of Birmingham},
	title = {Adelic Geometry via Topos Theory},
	year = {2023}}

@article{NVOstrowski,
	abstractnote = {<jats:p>This paper investigates the absolute values on $Z$ valued in the upper reals (i.e. reals for which only a right Dedekind section is given). These necessarily include multiplicative seminorms corresponding to the finite prime fields $mathbb{F}_p$. As an Ostrowski-type Theorem, the space of such absolute values is homeomorphic to a space of prime ideals (with co-Zariski topology) suitably paired with upper reals in the range $[-infty, 1]$, and from this is recovered the standard Ostrowski's Theorem for absolute values on $Q$. Our approach is fully constructive, using, in the topos-theoretic sense, geometric reasoning with point-free spaces, and that calls for a careful distinction between Dedekinds vs. upper reals. This forces attention on topological subtleties that are obscured in the classical treatment. In particular, the admission of multiplicative seminorms points to connections with Berkovich and adic spectra. The results are also intended to contribute to characterising a (point-free) space of places of $Q$.</jats:p>},
	author = {Ng, Ming and Vickers, Steven},
	doi = {10.4115/jla.2025.17.fds6},
	journal = {Journal of Logic and Analysis},
	month = apr,
	publisher = {Journal of Logic and Analysis},
	title = {A Point-free Look at {O}strowski's Theorem and Absolute Values},
	url = {http://dx.doi.org/10.4115/jla.2025.17.FDS6},
	volume = {17},
	year = {2025}}

@article{ConnesConsaniATIYAH,
	author = {Alain Connes and Caterina Consani},
	journal = {Quarterly Journal of Mathematics},
	pages = {1-29},
	title = {{S}egal's {G}amma rings and universal arithmetic},
	year = {2020}}

@unpublished{ClausenScholzeCondensed,
	author = {Dustin Clausen and Peter Scholze},
	note = {\url{https://people.mpim-bonn.mpg.de/scholze/Complex.pdf}},
	title = {Condensed Mathematics and Complex Geometry},
	url = {https://people.mpim-bonn.mpg.de/scholze/Complex.pdf},
	year = {2022}}

@inbook{PenaLorscheid,
	author = {Javier L\'{o}pez Peña and Oliver Lorscheid},
	chapter = {Mapping $\mathbb{F}_1$-Land: An Overview of Geometries over the field with one element},
	editor = {Caterina Consani and Alain Connes},
	publisher = {The Johns Hopkins University Press},
	title = {Noncommutative Geometry, Arithmetic, and Related Topics},
	year = {2011}}

@article{ArtinWhaples,
	author = {Emil Artin and George Whaples},
	journal = {Bulletin of the American Mathematical Society},
	pages = {469-492},
	title = {Axiomatic characterization of fields by the product formula for valuations},
	volume = {51},
	year = {1945}}

@unpublished{Noohi,
	author = {Behrang Noohi},
	note = {arXiv:0503247},
	title = {Foundations of Topological Stacks I},
	year = {2005}}

@book{LurieHigherTopos,
	author = {Jacob Lurie},
	publisher = {Princeton University Press},
	series = {Annals of Mathematics Studies},
	title = {Higher Topos Theory},
	year = {2009}}

@electronic{LuriePrincipalBundle,
	author = {Jacob Lurie},
	title = {Reply to Mathoverflow: classifying $\infty$-toposes for topological/localic groupoids?},
	url = {https://mathoverflow.net/questions/155166/classifying-infty-toposes-for-topological-localic-groups},
	year = {2014}}

@article{WeilSister,
	author = {Andr\'{e} Weil},
	journal = {Notices of the AMS},
	number = {3},
	pages = {334-341},
	title = {Letter to {S}imone {W}eil (1940) (transl. by {M}artin {H}. {K}rieger)},
	volume = {52},
	year = {2005}}

@article{BauerConstructive,
	author = {Andrej Bauer},
	journal = {Bulletin of the American Mathematical Society},
	month = {July},
	number = {3},
	pages = {481-498},
	title = {Five Stages of Accepting Constructive Mathematics},
	volume = {54},
	year = {2017}}

@article{ViPowerlocaleEXP,
	author = {Steven Vickers},
	journal = {Theory andApplications of Categories},
	number = {13},
	pages = {372-422},
	title = {The double powerlocale and exponentiation: A case study in geometric logic},
	volume = {12},
	year = {2004}}

@unpublished{Pitts,
	author = {A. M. Pitts},
	note = {Slides for a talk at the Category Theory Conference, Cambridge},
	title = {Lax descent for essential surjections},
	year = {1986}}

@article{BungePittsLax,
	author = {Marta Bunge},
	journal = {Tbilisi Mathematical Journal},
	number = {1},
	pages = {1-29},
	title = {{P}itts monads and a lax descent theorem},
	volume = {8},
	year = {2015}}

@inbook{TierneyM,
	author = {Myles Tierney},
	chapter = {On the spectrum of a ringed topos},
	editor = {A. Heller and M. Tierney},
	pages = {189-210},
	publisher = {Academic Press, New York},
	title = {Algebra, Topology and Category Theory: A Collection of Papers in Honor of Samuel Eilenberg},
	year = {1976}}

@article{ColeSpectra,
	author = {Julian Cole},
	journal = {Reprints in Theory and Applications of Categories},
	pages = {1-16},
	title = {The bicategory of topoi and spectra},
	volume = {25},
	year = {2016}}

@unpublished{VickersPtfreePtwise,
	author = {Steven Vickers},
	note = {arXiv:2206.01113},
	title = {Generalized Point-free Spaces, Pointwise},
	year = {2022}}

@article{Payne,
	author = {Sam Payne},
	journal = {Bull. Amer. Math. Soc; Bull. Amer. Math. Soc.},
	number = {2},
	pages = {223-247},
	title = {Topology of Non-{A}rchimedean analytic spaces and relations to complex algebraic geometry},
	volume = {52},
	year = {2015}}

@article{Arakelov,
	author = {S. Yu. Arakelov},
	journal = {Izv. Akad. Nauk SSSR Ser. Mat.},
	number = {6},
	pages = {1179-1192},
	title = {Intersection Theory of Divisors on an Arithmetic Surface},
	volume = {38},
	year = {1974}}

@book{ArakelovDiophantine,
	editor = {Emmanuel Peyre and Ga{\"e}l R{\'e}mond},
	publisher = {Springer, Cham},
	title = {Arakelov Geometry and Diophantine Applications},
	year = {2021}}

@book{Borceux,
	author = {Francis Borceux},
	number = {52},
	publisher = {Cambridge University Press},
	series = {Encyclopedia of Mathematics and its Applications},
	title = {Handbook of Categorical Algebra 3: Categories of Sheaves},
	year = {1994}}

@article{Bunge,
	author = {Marta Bunge},
	journal = {Mathematical Proceedings of the Cambridge Philosophical Society},
	pages = {59-79},
	title = {An Application of Descent to a Classifcation Theorem for Toposes},
	volume = {107},
	year = {1990}}

@book{J0,
	author = {P.T. Johnstone},
	publisher = {Academic Press},
	title = {Topos Theory},
	year = {1977}}

@article{Moerdijk,
	author = {Ieke Moerdijk},
	journal = {Transactions of the AMS},
	number = {2},
	title = {The Classifying Topos of a Continuous Groupoid {I}},
	volume = {310},
	year = {1988}}

@article{BG,
	author = {Scott Balchin and J.P.C. Greenlees},
	journal = {Advances in Mathematics},
	pages = {107339},
	title = {Adelic models of tensor-triangulated categories},
	volume = {375},
	year = {2020}}

@article{Maz,
	author = {Barry Mazur},
	journal = {Bulletin of the AMS},
	number = {1},
	pages = {14-50},
	title = {On the Passage from Local to Global in Number Theory},
	volume = {29},
	year = {1993}}

@book{BerkovichMonograph,
	author = {Vladimir Berkovich},
	publisher = {American Mathematical Society},
	title = {Spectral Theory and Analytic Geometry over Non-Archimedean Fields},
	year = {1990}}

@book{HruLoe,
	author = {Ehud Hrushovski and Fran{\c c}ois Loeser},
	publisher = {Princeton University Press},
	title = {Non-archimedean Tame Topology and Stably Dominated Types},
	year = {2016}}

@article{JohnstoneSpectra,
	author = {P. T. Johnstone},
	journal = {Journal of Algebra},
	pages = {238-260},
	title = {Rings, Fields, and Spectra},
	volume = {49},
	year = {1977}}

@book{JoyalTierney,
	author = {Andr\'{e} Joyal and Myles Tierney},
	publisher = {Memoirs of the AMS},
	title = {An Extension of the Galois Theory of Grothendieck},
	year = {1984}}

@article{NV,
	author = {Ming Ng and Steven Vickers},
	journal = {Logical Methods in Computer Science},
	number = {3},
	pages = {1-32},
	publisher = {Logical Methods in Computer Science},
	title = {Point-free Construction of Real Exponentiation},
	volume = {18},
	year = {2022}}

@phdthesis{R,
	author = {Guillaume Raynaud},
	school = {School of Computer Science, University of Birmingham},
	title = {Fibred Contextual Quantum Physics},
	year = {2014}}

@article{Smyth,
	author = {M.B. Smyth},
	journal = {Theoretical Computer Science},
	number = {3},
	pages = {257-274},
	title = {Effectively Given Domains},
	volume = {5},
	year = {1977}}

@book{vdW1,
	author = {B.L. van der Waerden},
	publisher = {Springer-Verlag},
	title = {Algebra},
	volume = {1},
	year = {1991}}

@article{Vi3,
	author = {Steven Vickers},
	journal = {Journal of Applied Logic},
	number = {1},
	pages = {14-27},
	title = {Continuity and Geometric Logic},
	volume = {12},
	year = {2014}}

@incollection{Vi07,
	author = {Steven Vickers},
	booktitle = {Handbook of Spatial Logics},
	editor = {M Aiello and I E Pratt-Hartmann and J F van Benthem},
	pages = {429-496},
	publisher = {Springer},
	title = {Locales and Toposes as Spaces},
	year = {2007}}

@article{Vi8,
	author = {Steven Vickers},
	journal = {Theoretical Computer Science},
	pages = {201-229},
	title = {Information Systems for Continuous Posets},
	volume = {114},
	year = {1993}}

@article{ViLocCompII,
	author = {Steven Vickers},
	journal = {Journal of Logic and Analysis},
	number = {11},
	pages = {1-48},
	title = {Localic completion of generalized metric spaces II: Powerlocales},
	volume = {1},
	year = {2009}}

@article{ViSublocales,
	author = {Steven Vickers},
	journal = {Journal of Symbolic Logic},
	number = {2},
	pages = {463-482},
	title = {Sublocales in Formal Topology},
	volume = {72},
	year = {2007}}

\end{document}